\newtheorem{theorem}{Theorem}\numberwithin{theorem}{section}
\numberwithin{proposition}{section}
\newtheorem{corollary}{Corollary}\numberwithin{corollary}{section}
\newtheorem{lemma}{Lemma}\numberwithin{lemma}{section}
\newtheorem{definition}{Definition}
\newtheorem{example}{Example}\numberwithin{example}{section}
\newtheorem{claim}{Claim}
\newtheorem{remark}{Remark}
\newcommand{\thmref}[1]{Theorem~\ref{thm:#1}} 
\newcommand{\lemref}[1]{Lemma~\ref{lem:#1}} 
\newcommand{\remref}[1]{Remark~\ref{rem:#1}} 
\newcommand{\corref}[1]{Corollary~\ref{cor:#1}} 
\newcommand{\defref}[1]{Definition~\ref{def:#1}} 
\newcommand{\secref}[1]{Section~\ref{sec:#1}} 
\newcommand{\eqnref}[1]{(\ref{eq:#1})} 
\def\be{\begin{equation} }
\def\ee{ \end{equation}}
\def\ben{\begin{equation*}}
\def\een{\end{equation*}}
\def\bea{\begin{eqnarray}}
\def\eea{\end{eqnarray}}
\def\ee{\end{eqnarray}}
\def\bean{\begin{eqnarray*}}
\def\eean{\end{eqnarray*}}
\def\haslaw{\stackrel{\cal L}{\sim}}
\newcommand\ignore[1]{}
\def\R{\mathbb{R}} 
\def\N{\mathbb{N}} 
\def\EXP{\mathbb{E}} 
\def\PROB{\mathbb{P}} 
\newcommand{\Ex}[1]{\mathbb{E}\left[#1\right]} 
\newcommand{\Ind}[1]{{\mathbbm 1}\left\{#1\right\}} 
\renewcommand{\Pr}[1]{\mathbb{P}\left(#1\right)} 
\newcommand{\paren}[1]{\left(#1\right)}
\newcommand{\absj}[1]{\left|#1\right|}
\newcommand{\set}[1]{\left\{#1\right\}}
\newcommand{\bigoh}[1]{O\left(#1\right)}
\newcommand{\liloh}[1]{o\left(#1\right)}
\def\sG{\mathcal{G}}
\def\sP{\mathcal{P}}\def\sR{\mathcal{R}}
\newcommand\QED{\ifhmode\allowbreak\else\nobreak\fi
\quad\nobreak$\Box$\medbreak}
\newcommand{\proofstart}{\par\noindent\sl Proof:\rm\enspace}
\newcommand{\proofend}{\QED\par}
\newenvironment{proof}{\proofstart}{\proofend}
\def\eps{\epsilon}
\newcommand{\Po}{\mbox{\sf Po}}
\def\Po{{\rm Po}}
\def\Pp{{\rm P}}
\def\Pphat{\widehat{\rm P}}
\def\Ehat{\widehat{E}}
\def\Rhat{\widehat{R}}
\def\Ehatdelta{\widehat{E}_{n,\delta}}
\def\ahat{\hat{a}}
\def\bhat{\hat{b}}
\def\Ihat{\widehat{I}}
\def\muhat{\widehat{\mu}}
\def\shat{\widehat{\sigma}}
\def\nuhat{\widehat{\nu}}
\def\ellhat{\hat{\ell}}
\def\Po{{\sf Po}}
\def\La{{\sf La}}
\def\Lreg{L_*}
\def\Emphat{\widehat{\sf emp}_n}
\def\single{$\delta$-dependent}
\def\mult{multiple-$\delta$}
\begin{document}
\title{Sub-Gaussian mean estimators}
\author{Luc Devroye \and Matthieu Lerasle \and G\'{a}bor Lugosi \and Roberto I. Oliveira}
\maketitle
\begin{abstract}
We discuss the possibilities and limitations of estimating the mean of
a real-valued random variable from independent and identically
distributed observations from a non-asymptotic point of view. In
particular, we define estimators with a sub-Gaussian behavior even for
certain heavy-tailed distributions. We also prove various
impossibility results for mean estimators.
\end{abstract}

\section{Introduction}\label{sec:intro}

Estimating the mean of a probability distribution $\Pp$ on the real line based on a sample 
$X_1^n=(X_1,\dots,X_n)$ of $n$ independent and identically
distributed random variables is arguably the most basic problem of statistics. 
While the standard empirical mean
$$\Emphat(X_1^n)=\frac{1}{n}\sum_{i=1}^nX_i$$
is the most natural choice, its finite-sample performance is far from optimal 
when the distribution has a heavy tail. 

The central limit theorem guarantees that if the $X_i$ have a finite second moment,
this estimator has Gaussian tails, asymptotically, when $n\to \infty$. Indeed,   
\begin{equation}\label{eq:CLT}\Pr{\left|\Emphat(X_1^n) - \mu_{\Pp}\right|>\frac{\sigma_{\Pp}\,\Phi^{-1}(1-\delta/2)}{\sqrt{n}}}\to \delta,\end{equation}
where $\mu_{\Pp}$ and $\sigma^2_{\Pp}>0$ are the mean and variance of $\Pp$ (respectively) and $\Phi$ is the cumulative distribution function of the standard normal distribution. This result is essentially optimal: no estimator can have better-than-Gaussian tails for all distributions in any ``reasonable class"~(cf.\ \remref{optimal} below).  

This paper is concerned with a non-asymptotic version of the mean estimation problem. We are interested in large, non-parametric classes of distributions, such as
\begin{eqnarray}\label{eq:defP2}\sP_{ 2}&:=& \{\mbox{all distributions over $\R$ with finite second moment}\}\\
 \label{eq:deffixedvariance}\sP^{\sigma^2}_{2}&:=& \{\mbox{all distributions $\Pp\in\sP_2$ with variance $\sigma^2_{\Pp}=\sigma^2$}\} \;\;(\sigma^2>0)\\ \label{eq:defP4}
\sP_{{\rm krt}\leq \kappa}&:=& \{\mbox{all $\Pp\in\sP_2$ with kurtosis $\leq \kappa$}\}\;\;(\kappa>1),\end{eqnarray}
as well as some other classes introduced in \secref{results}. Given such a class $\sP$, we would like to construct {\em sub-Gaussian estimators}. These should take an i.i.d.\ sample $X_1^n$ from some unknown $\Pp\in\sP$ and produce an estimate $\Ehat_n(X_1^n)$ of $\mu_{\Pp}$ that satisfies
\begin{equation}\label{eq:sub-Gaussian-1}
\Pr{|\Ehat_n(X_1^n) - \mu_{\Pp}|>L\,\sigma_{\Pp}\,\sqrt{\frac{(1+\ln(1/\delta))}{n}}}\leq \delta
\quad \text{for all $\delta\in[\delta_{\min},1)$}
\end{equation}
for some constant $L>0$ that depends only on $\sP$. One would like to keep $\delta_{\min}$ as small as possible (say exponentially small in $n$). 

Of course, when $n\to\infty$ with $\delta$ fixed, \eqnref{sub-Gaussian-1} is a weaker form of \eqnref{CLT} since $\Phi^{-1}(1-\delta/2)\leq \sqrt{2\ln(2/\delta)}$. The point is that \eqnref{sub-Gaussian-1} should hold non-asymptotically, for extremely small $\delta$, and uniformly over $\Pp\in\sP$, even for classes $\sP$ containing distributions with heavy tails. The empirical mean cannot satisfy this property 
unless either $\sP$ contains only sub-Gaussian distributions or $\delta_{min}$ is quite
large  (cf.\ \secref{empirical}), so designing sub-Gaussian estimators with the kind of guarantee we look for is a non-trivial task.

In this paper we prove that, for most (but not all) classes
$\sP\subset \sP_2$ we consider, there do exist estimators that achieve \eqnref{sub-Gaussian-1} for all large $n$, with $\delta_{\min}\approx e^{-c_{\sP}\,n}$ and a value of $L$ that does not depend on $\delta$ or $n$. In each case, $c_{\sP}>0$ is a constant that depends on the class $\sP$ under consideration, and we also obtain nearly tight bounds on how $c_{\sP}$ must depend on $\sP$. (In particular, $\delta_{\min}$ cannot be superexponentially small in $n$.) In the specific case of bounded-kurtosis distributions (cf.\ \eqnref{defP4} above), we achieve  $L\leq \sqrt{2}+\eps$ for $\delta_{\min}\approx e^{-\liloh{(n/\kappa)^{2/3}}}$. This value of $L$ is nearly optimal by \remref{optimal} below. 

Before this paper, it was known that \eqnref{sub-Gaussian-1} could be
achieved for the whole class $\sP_2$ of distributions with finite
second moments, with a weaker notion of estimator that we call  {\em
  \single~estimator}, that is, an estimator $\Ehat_n=\Ehatdelta$ that
may also depend on the confidence parameter $\delta$. By contrast, the
estimators that we introduce here are called {\em \mult~estimators}: a
single estimator works for the whole range of $\delta\in
[\delta_{\min},1)$. 
This distinction is made formal in \defref{sub-Gaussian} below. By way of comparison, we also prove some results on \single~estimators in the paper. In particular, we show that the distinction is substantial. For instance, there are no \mult~sub-Gaussian estimators for the full class $\sP_2$ for any nontrivial range of $\delta_{\min}$. Interestingly,  \mult~estimators do exist (with $\delta_{\min}\approx e^{-c\,n}$) for the class $\sP_2^{\sigma^2}$ (corresponding to fixed variance). In fact, this is true when the variance is ``known up to constants," but not otherwise. 

\subsubsection*{Why finite variance?}
In all examples mentioned above, we assume that all distributions
$P\in\sP$ have a finite variance $\sigma_P^2$. In fact, our definition
(\ref{eq:sub-Gaussian-1}) implicitly requires that the variance exists
for all $P\in \sP$. A natural question is if this condition can be
weakened. For example, for any $\alpha \in (0,1]$ and $M>0$, one may consider the class 
$\sP^M_{1+\alpha}$ of all distributions whose $(1+\alpha)$-th central
moment equals $M$ (i.e., $\EXP\left[|X-\EXP X|^{1+\alpha}\right]=M$ if
$X$ is distributed according to any $P\in \sP^M_{1+\alpha}$). It is
natural to ask whether there exist estimators of the mean satisfying 
(\ref{eq:sub-Gaussian-1}) with $\sigma_P$ replaced by some constant 
depending on $P$. In Theorem \ref{thm:infvar} we prove that for every sample
size $n$, $\delta<1/2$, $\alpha\in (0,1]$, and for any mean estimator $\Ehatdelta$, 
there exists a distribution $P\in \sP^M_{1+\alpha}$ such that with
probability at least $\delta$, the estimator is at least 
$M^{1/(1+\alpha)}
\left(\frac{\ln(1/\delta)}{n}\right)^{\alpha/(1+\alpha)}$ away from
the target $\mu_{\Pp}$. 

This result not only shows that one
cannot expect sub-Gaussian confidence intervals for classes that
contain distributions of infinite variance but also that in such cases
it is impossible to have confidence intervals whose length scales as $n^{-1/2}$.

\subsubsection*{Weakly sub-Gaussian estimators}

Consider the class $\sP^{\text{Ber}}$ of all Bernoulli distributions,
that is, the class that contains all distributions $P$ of the form
\[
   P(\{1\})= 1-P(\{0\}) = p~,   \qquad  p\in [0,1]~.
\]
Perhaps surprisingly, no \mult~estimator exists for this class of
distributions, even when $\delta_{\min}$ is a constant. (We do not
explicitly prove this here but it is easy to deduce it using the
techniques of Sections \ref{sec:scaledBer} and \ref{sec:poisson}.)
On the other hand, by standard tail bounds for the binomial
distribution
(e.g., by Hoeffding's inequality),
the standard empirical mean satisfies, for all $\delta>0$ and $P\in \sP^{\text{Ber}}$,
\[
\Pr{|\Emphat(X_1^n) - \mu_{\Pp}|>\sqrt{\frac{\ln(2/\delta))}{2n}}}\leq \delta~.
\]
Of course, this bound has a sub-Gaussian flavor as it resembles
(\ref{eq:sub-Gaussian-1}) except that the confidence bounds do not
scale by $\sigma_{\Pp}(\log(1/\delta)/n)^{1/2}$ but rather by a distribution-free
constant times $(\log(1/\delta)/n)^{1/2}$.

In general, we may call an estimate weakly sub-Gaussian with respect
to the class $\sP$ if there exists a constant $\overline{\sigma}_{\sP}$ such that
for all $\Pp\in \sP$,
\[
\Pr{|\Ehat_n(X_1^n) - \mu_{\Pp}|>L\,\overline{\sigma}_{\sP}\,\sqrt{\frac{(1+\ln(1/\delta))}{n}}}\leq \delta
\quad \text{for all $\delta\in[\delta_{\min},1)$}
\]
for some constant $L>0$.
\single~and \mult~versions of this definition may be given in analogy
to those of sub-Gaussian estimators.

Note that if a class $\sP$ is such that $\sup_{\Pp\in \sP}
\sigma_{\Pp} < \infty$, then any sub-Gaussian estimator is weakly
sub-Gaussian. However, for classes of distributions without uniformly
bounded variance, this is not necessarily the case and the two notions
are incomparable. 

In this paper we focus on the
notion of sub-Gaussian estimators and we do not pursue further the
characterization of the existence of weakly  sub-Gaussian estimators.

\subsection{Related work} To our knowledge, the explicit distinction
between \single~and \mult~estimators, and our construction of
\mult~sub-Gaussian estimators for exponentially small $\delta$, are
all new. On the other hand, constructions of \single~estimators are
implicit in older work on stochastic optimization of Nemirovsky and
Yudin \cite{NemirovskyYudin_ProblemComplexity} (see also Levin
\cite{Levin_Notes} and Hsu \cite{HsuBlog}),
sampling from large discrete structures by Jerrum, Valiant, and
Vazirani \cite{JeVaVa86},
 and sketching algorithms, see Alon, Matias, and Szegedy \cite{AlonMatiasSzegedy_FreqMoments}. 
Recently, there has been a surge of interest in sub-Gaussian estimators, 
their generalizations to multivariate settings, and their applications in a variety
of statistical learning problems where heavy-tailed distributions may
be present, see, for example, 
Catoni \cite{Catoni_Challenging},
Hsu and Sabato \cite{HsuSabato_MoMRegression},
Brownlees, Joly, and Lugosi \cite{BrownleesEtAlHeavyTailed},
Lerasle and Oliveira \cite{LerasleOliveira_Robust},
Minsker \cite{Min13},
Audibert and Catoni \cite{AudibertCatoni_Robust},
Bubeck, Cesa-Bianchi, and Lugosi \cite{BubeckEtAl_BanditsHeavy}.
Most of these papers use \single~sub-Gaussian estimators. 
Catoni's paper \cite{Catoni_Challenging} is close in spirit to ours, as it focuses on sub-Gaussian mean estimation as a fundamental problem. That paper presents \single~sub-Gaussian estimators with nearly optimal $L=\sqrt{2}+\liloh{1}$ for a wide range of $\delta$ and the classes $\sP_2^{\sigma^2}$ and $\sP_{{\rm krt}\leq \kappa}$ defined in \eqnref{deffixedvariance}. 
The \single~sub-Gaussian estimator introduced by \cite{Catoni_Challenging} 
may be converted into a \mult~estimators with subexponential (instead of sub-Gaussian) tails for $\sP_2^{\sigma^2}$
by choosing the single parameter of the estimator appropriately. 
Loosely speaking, this corresponds to squaring the term $\ln(1/\delta)$ in \eqnref{sub-Gaussian-1}. 
Catoni also obtains \mult~estimators for $\sP_2$ with subexponential tails. These ideas are strongly related to Audibert and Catoni's paper on robust least-squares linear regression \cite{AudibertCatoni_Robust}. \\

\subsection{Main proof ideas}

The {\em negative results} we prove in this paper are minimax lower
bounds for simple families of distributions such as scaled Bernoulli distributions
(\thmref{infvar}), Laplace distributions with fixed scale parameter
for \single~(\thmref{lowerfixed}), and the Poisson family for
\mult~estimators (\thmref{negativestrong}). The main point about the
latter choices is that it is easy to compare the probabilities of events when one changes the values of the parameter. Interestingly, Catoni's lower bounds in \cite{Catoni_Challenging} also follow from a one dimensional family (in that case, Gaussians with fixed variance $\sigma^2>0$).

Our {\em constructions of estimators} use two main ideas. The first one is that, while one cannot turn \single~into \mult~estimators, one {\em can} build \mult~estimators from the slightly stronger concept of {\em sub-Gaussian confidence intervals}. That is, if for each $\delta>0$ one can find an empirical confidence interval for $\mu_{\Pp}$ with ``sub-Gaussian length", one may combine these intervals to produce a single \mult~estimator. This general construction is presented in \secref{confidence} and is related at a high level to Lepskii's adaptation method \cite{Lep90,Lep91}. 

Although general, this method of confidence intervals loses constant
factors. Our second idea for building estimators, which is specific to the bounded kurtosis case (see  \thmref{OptConstUnderKurtosis} below), is to use a data-driven truncation mechanism to make the empirical mean better behaved. By using preliminary estimators of the mean and variance, we truncate the random variables in the sample and obtain a Bennett-type concentration inequality with sharp constant $L=\sqrt{2}+\liloh{1}$. A crucial point in this analysis is to show that our truncation mechanism is fairly insensitive to the preliminary estimators being used.  
 
\subsection{Organization.}

The remainder of the paper is organized as follows. \secref{prelim} fixes notation, formally defines our problem, and discusses previous work in light of our definition. \secref{results} states our main results. Several general methods that we use throughout the paper are collected in \secref{GeneralMethod}. Proofs of the main results are given in Sections \ref{sec:variance} to \ref{sec:BoundedKurtosis}. \secref{open} discusses several open problems. 

\section{Preliminaries} \label{sec:prelim}

\subsection{Notation}

We write $\N=\{0,1,2,\dots\}$. For a positive integer $n$, denote $[n]=\{1,\ldots,n\}$. 
$|A|$ denotes the cardinality of the finite set $A$.

We treat $\R$ and $\R^n$ as measurable spaces with the respective Borel $\sigma$-fields kept implicit. Elements of $\R^n$ are denoted by $x_1^n=(x_1,\dots,x_n)$ with $x_1,\dots,x_n\in\R$.

Probability distributions over $\R$ are denoted $\Pp$. Given a (suitably measurable) function $f=f(X,\theta)$ of a real-valued random variable $X$ distributed according to 
$\Pp$ and some other parameter $\theta$, we let $$\Pp\,f=\Pp\,f(X,\theta)=\int_{\R}f(x,\theta)\,\Pp(dx)$$ denote the integral of $f$ with respect to $X$. Assuming $\Pp\,X^2<\infty$, we use the symbols $\mu_{\Pp}=\Pp \,X$ and $\sigma^2_{\Pp}=\Pp\,X^2-\mu^2_{\Pp}$ for the mean and variance of $\Pp$. 

$Z=_d\Pp$ means that $Z$ is a random object (taking values in some measurable space) and $\Pp$ is the distribution of this object. $X_1^n=_ d\Pp^{\otimes n}$ means that $X_1^n=(X_1,\dots,X_n)$ is a random vector in $\R^n$ with the product distribution corresponding to $\Pp$. Moreover, given such a random vector $X_1^n$ and a nonempty set $B\subset [n]$, $\Pphat_B$ is the empirical measure of $X_i$, $i\in B$:
$$\Pphat_B=\frac{1}{|B|}\sum_{i\in B}\delta_{X_i}.$$
We write $\Pphat_n$ instead of $\Pphat_{[n]}$ for simplicity.

\subsection{The sub-Gaussian mean estimation problem}

In this section, we begin a more formal discussion of the main problem
in this paper. We start with the definition of a sub-Gaussian
estimator of the mean.

\begin{definition}
\label{def:sub-Gaussian}
Let $n$ be a positive integer, $L>0$, $\delta_{\min}\in (0,1)$. Let
$\sP$ be a family of probability distributions over $\R$ with finite second moments. 
\begin{enumerate}\item {\bf \single~sub-Gaussian estimation:} a
  \single~$L$-sub-Gaussian estimator for $(\sP,n,\delta_{\min})$ is a
  measurable mapping $\Ehat_n:\R^n\times [\delta_{\min},1)\to \R$ such
  that if $\Pp\in\sP$,  $\delta\in[\delta_{\min},1)$, and
 $X_1^n=(X_1,\ldots,X_n)$ is a sample of i.i.d.\ random variables
  distributed as $\Pp$, then
\begin{equation}\label{eq:sub-Gaussianfixed}\Pr{|\Ehat_n(X_1^n,\delta) - \mu_{\Pp}|>L\,\sigma_{\Pp}\,\sqrt{\frac{(1+\ln(1/\delta))}{n}}}\leq \delta~.\end{equation}
We also write $\Ehatdelta(\cdot)$ for $\Ehat_n(\cdot,\delta)$.
\item {\bf \mult~sub-Gaussian estimation:} a \mult~$L$-sub-Gaussian
  estimator for $(\sP,n,\delta_{\min})$ is a measurable mapping
  $\Ehat_n:\R^n\to \R$ such that, for each $\delta\in
  [\delta_{\min},1)$, $\Pp\in\sP$ and 
i.i.d.\ sample $X_1^n=(X_1,\ldots,X_n)$  distributed as $\Pp$,
\begin{equation}\label{eq:sub-Gaussian}\Pr{|\Ehat_n(X_1^n) - \mu_{\Pp}|>L\,\sigma_{\Pp}\,\sqrt{\frac{(1+\ln(1/\delta))}{n}}}\leq \delta~.\end{equation}
\end{enumerate}\end{definition}

It transpires from these definitions that \mult~estimators are
preferable whenever they are available, because they combine good
typical behavior with nearly optimal bounds under extremely rare
events. By contrast, the need to commit to a $\delta$ in advance means
that \single~estimators may be too pessimistic when a small $\delta$
is desired. The main problem addressed in this paper is the following:

\medskip
\noindent
\framebox{\parbox{\dimexpr\linewidth-2\fboxsep-2\fboxrule}{\itshape%
Given a family $\sP$ (or more generally a sequence of families
$\sP_n$), find the smallest possible sequence
$\delta_{\min}=\delta_{\min,n}$ such that \mult~$L$-sub-Gaussian
estimators for $(\sP,n,\delta_{\min,n})$
(resp. $(\sP_n,n,\delta_{\min,n})$) exist for all large $n$, and with
a constant $L$ that does not depend on $n$.}}

\begin{remark}
\textsc{(optimality of sub-gaussian estimators.)}
\label{rem:optimal} Call a class $\sP$ ``reasonable"~when it contains
all Gaussian distributions with a given variance $\sigma^2>0$. Catoni \cite[Proposition 6.1]{Catoni_Challenging} shows that, if $\delta\in(0,1)$, $\sP$ is reasonable and some estimator $\Ehatdelta$ achieves 
$$\Pr{\Ehatdelta(X_1^n)-\mu_{\Pp}>\frac{r\,\sigma_{\Pp}}{\sqrt{n}}}\leq \delta\mbox{ whenever }\Pp\in\sP~,$$
then $r\geq \Phi^{-1}(1-\delta)$. The same result holds for the lower tail. Since $\Phi^{-1}(1-\delta)\sim \sqrt{2\ln(1/\delta)}$ for small $\delta$, this means that, for any reasonable class $\sP$, no constant $L<\sqrt{2}$ is achievable for small $\delta_{\min}$, and no better dependence on $n$ or $\delta$ is possible. In particular, sub-Gaussian estimators are optimal up to constants, and estimators with $L\leq \sqrt{2}+o(1)$ are ``nearly optimal."\end{remark}

\subsection{Known examples from previous work}

In what follows we present some known estimators of the mean and discuss their sub-Gaussian properties (or lack thereof).

\subsubsection{Empirical mean as a sub-Gaussian
  estimator}\label{sec:empirical} For large $n$, $\sigma^2>0$ fixed
and $\delta_{\min}\to 0$, the empirical mean 
$$\Emphat(x_1^n)=\frac{1}{n}\sum_{i=1}^nx_i$$
 is {\em not} a $L$-sub-Gaussian estimator for the class $\sP^{\sigma^2}_2$ of all distibutions with variance $\sigma^2$. This is a consequence of \cite[Proposition 6.2]{Catoni_Challenging}, which shows that the deviation bound obtained from Chebyshev's inequality is essentially sharp. 

Things change under slightly stronger assumptions. For example, a
nonuniform version of the Berry-Ess\'{e}en theorem (\cite[Theorem 14,
p.\ 125]{Pet75})
implies that, for large $n$, $\Emphat$ is a  \mult~$\left(\sqrt{2}+\eps\right)$-sub-Gaussian estimator for $(\sP_{3,\eta},n,\delta_{\min,n})$, where
$$\sP_{3,\eta}=\{\Pp\in\sP_{2}\,:\,\Pp|X-\mu_{\Pp}|^3\leq (\eta\,\sigma)^3\}$$
for some $\eta>1)$ and $\delta_{\min,n}\gg n^{-1/2}(\log
n)^{-3/2}$. Similar results (with worse constants) hold for the class
$\sP_{{\rm krt}\leq \kappa}$ (cf.\ \eqnref{defP4}) when
$\delta_{\min}\gg 1/n$ and $\kappa$ is bounded \cite[Proposition
5.1]{Catoni_Challenging}. Catoni \cite[Proposition
6.3]{Catoni_Challenging} shows that the sub-Gaussian property breaks
down when $\delta_{\min}=\liloh{1/n}$. Exponentially small
$\delta_{\min}$ can be achieved under much stronger assumptions. For example, Bennett's inequality implies that $\Emphat$ is $\left(\sqrt{2}+\eps\right)$-sub-Gaussian for the triple $(\sP_{\infty,\eta},n,\delta_{\min})$, with $\delta_{\min}=e^{-\eps^2n/\eta^2}$ and
$$\sP_{\infty,\eta}:=\{\Pp\in\sP_{2}\,:\,|X-\mu_{\Pp}|\leq \eta\,\sigma_{\Pp}\mbox{ a.s.}\}~.$$

\subsubsection{Median of means}\label{sec:MoM} 
Quite remarkably, as it has been known for some time, one can do much
better than the empirical mean in the \single~setting. The so-called
{\em median of means} construction gives $L$-sub-Gaussian estimators
$\Ehatdelta$ (with $L$ some constant) for any triple
$(\sP_2,n,e^{1-n/2})$ where $n\geq 6$. The basic idea is to partition
the data into disjoint blocks, calculate the empirical mean within
each block, and finally take the median of them.
This construction with a basic performance bound is reviewed
in \secref{MoM2}, as it provides a building block and an inspiration
for the new constructions in this paper. We emphasize that, as pointed
out in the introduction, variants of this result have been known for a
long time, see
Nemirovsky and Yudin \cite{NemirovskyYudin_ProblemComplexity}, 
Levin \cite{Levin_Notes},
Jerrum, Valiant, and Vazirani \cite{JeVaVa86},
and Alon, Matias, and Szegedy \cite{AlonMatiasSzegedy_FreqMoments}. 
Note that this estimator has good performance even for distributions with infinite
variance
(see the remark following Theorem \ref{thm:infvar} below).

\subsubsection{Catoni's estimators}\label{sec:Catoni}
The constant $L$ obtained by the median-of-means estimator
is larger than the optimal value $\sqrt{2}$ (see \remref{optimal}). 
Catoni \cite{Catoni_Challenging} designs \single~sub-Gaussian estimators with
nearly optimal $L=\sqrt{2}+o(1)$ for the classes
$\sP_{2}^{\sigma^2}$ (known variance) and $\sP_{{\rm krt}\leq \kappa}$
(bounded kurtosis). A variant of Catoni's estimator is a
\mult~estimator, however with subexponential instead of sub-Gaussian
tails (i.e., the $\sqrt{\ln(1/\delta)}$
term in \eqnref{sub-Gaussian} appears squared). Both estimators work
for exponentially small $\delta$, although the constant in the
exponent for $\sP_{{\rm krt}\leq \kappa}$ depends on $\kappa$.

\section{Main results}\label{sec:results}

Here we present the main results of the paper. 
Proofs are deferred to Sections~\ref{sec:GeneralMethod} to \ref{sec:BoundedKurtosis}. 

\subsection{On the non-existence of sub-Gaussian mean estimators}

Recall that for any $\alpha,M >0$, $\sP^M_{1+\alpha}$ denotes the class
of all distributions on $\R$ 
whose $(1+\alpha)$-th central
moment equals $M$ (i.e., $\EXP\left[|X-\EXP X|^{1+\alpha}\right]=M$). 
We start by pointing out that when $\alpha <1$, no sub-Gaussian estimators exist 
(even if one allows \single~estimators).

\begin{theorem}
\label{thm:infvar}
Let $n>5$ be a positive integer, $M>0$, $\alpha \in (0,1]$, and $\delta
\in (2e^{-n/4},1/2)$. Then for any mean estimator $\Ehat_n$,
\[
\sup_{P\in \sP^M_{1+\alpha}} 
\Pr{|\Ehat_n(X_1^n,\delta) - \mu_{\Pp}|> \left(\frac{M^{1/\alpha}\ln(2/\delta)}{n}\right)^{\alpha/(1+\alpha)}}\geq \delta~.
\]
\end{theorem}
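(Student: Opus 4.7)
The plan is to use Le Cam's two-point method with a symmetric pair of scaled-Bernoulli distributions that both lie in $\sP^M_{1+\alpha}$. First I would fix a parameter $p \in (0, 1/2)$ (to be chosen later) and consider
\[
P_\pm := (1-p)\delta_0 + p\,\delta_{\pm x}.
\]
A direct computation gives $\mu_{P_\pm} = \pm p x$ and
\[
\EXP_{P_\pm}\bigl|X - \mu_{P_\pm}\bigr|^{1+\alpha} \;=\; p(1-p)\,x^{1+\alpha}\bigl[p^\alpha + (1-p)^\alpha\bigr],
\]
which depends only on $|x|$. Setting this equal to $M$ determines
\[
x = x(p) = \Bigl(\frac{M}{p(1-p)[p^\alpha + (1-p)^\alpha]}\Bigr)^{1/(1+\alpha)},
\]
so that $P_+,P_- \in \sP^M_{1+\alpha}$ simultaneously and $|\mu_{P_+} - \mu_{P_-}| = 2\,p\,x(p)$.

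The key geometric observation is that $P_+^{\otimes n}$ is supported on $\{0,x\}^n$ and $P_-^{\otimes n}$ on $\{0,-x\}^n$, so the only realization they share is $(0,\ldots,0)$ (on which both assign mass $(1-p)^n$). This yields
\[
\dtv\!\bigl(P_+^{\otimes n},\,P_-^{\otimes n}\bigr) \;=\; 1 - (1-p)^n,
\]
with no approximation. Combining Le Cam's lemma with the fact that any estimator lying within $p\,x(p)$ of one $\mu_{P_\pm}$ is at distance at least $p\,x(p)$ from the other, I conclude: for every estimator $\Ehat_n$,
\[
\max_{s \in \{+,-\}} \Pr{\,|\Ehat_n(X_1^n,\delta) - \mu_{P_s}| \,\geq\, p\,x(p)\,} \;\geq\; \frac{(1-p)^n}{2}.
\]

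To finish I would pick $p_* := 1 - (2\delta)^{1/n}$, so that the right-hand side above equals exactly $\delta$. The hypothesis $\delta > 2 e^{-n/4}$ guarantees $p_* \leq 1 - e^{-1/4} < 1/2$, which in turn keeps the correction factor $(1-p_*)[p_*^\alpha + (1-p_*)^\alpha]$ in $x(p_*)$ bounded between two absolute constants; the elementary inequality $1 - e^{-y} \geq y/2$ for $y \in (0,1/4]$ then gives $p_* \geq \ln(1/(2\delta))/(2n)$, and therefore
\[
p_*\,x(p_*) \;\geq\; c\,M^{1/(1+\alpha)}\,\Bigl(\frac{\ln(1/(2\delta))}{n}\Bigr)^{\alpha/(1+\alpha)}
\]
for an absolute constant $c > 0$.

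The hard part will be the constant bookkeeping: the argument above naturally produces a lower bound involving $\ln(1/(2\delta))$, while the statement uses $\ln(2/\delta)$. For $\delta$ bounded away from $1/2$ these two quantities differ only by an absolute factor (for instance $\ln(2/\delta) \leq 3\ln(1/(2\delta))$ once $\delta \leq 1/8$), so the bound in the theorem should follow after minor adjustments — fine-tuning $p$ slightly below $p_*$, or splitting the range of $\delta$ and handling $\delta$ close to $1/2$ by a separate (crude) argument using a constant value of $p$. Apart from managing these constants and the mild correction from $(1-p)[p^\alpha+(1-p)^\alpha]$, the argument is a clean application of the two-point method with an explicit, easily analyzed hypothesis pair.
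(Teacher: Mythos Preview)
Your proposal is essentially the same as the paper's proof: the same pair $P_\pm=(1-p)\delta_0+p\delta_{\pm x}$, the same observation that the two product measures overlap only on the all-zero sample (with mass $(1-p)^n$), and the same two-point reduction. The only difference is bookkeeping: the paper sets $p=\tfrac{2}{n}\ln(2/\delta)$ directly and then checks $(1-p)^n\ge 2\delta$ via $1-p\ge e^{-p/(1-p)}$, which lands on $\ln(2/\delta)$ with the stated constant without the extra $\ln(1/(2\delta))$ vs.\ $\ln(2/\delta)$ patch you anticipate; you may find that choice of $p$ cleaner than solving $(1-p)^n=2\delta$ and then bounding $p$ from below.
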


The proof is given in Section \ref{sec:scaledBer}.
The bound of the theorem is essentially tight. It is shown in 
Bubeck, Cesa-Bianchi, and Lugosi \cite{BubeckEtAl_BanditsHeavy} that
for each $M>0$, $\alpha \in (0,1]$, and $\delta$, there exists
an estimator $\Ehat_n(X_1^n,\delta)$ such that
\[
\sup_{P\in \sP^M_{1+\alpha}} 
\Pr{|\Ehat_n(X_1^n,\delta) - \mu_{\Pp}|> \left(8\frac{(12M)^{1/\alpha}\ln(1/\delta)}{n}\right)^{\alpha/(1+\alpha)}}\le \delta~.
\]
The estimator $\Ehat_n(X_1^n,\delta)$ satisfying this bound is the
median-of-means estimator with appropriately chosen parameters.

It is an interesting question whether \mult~estimators exist with 
similar performance. Since our primary goal in this paper is the study
of sub-Gaussian estimators, we do not pursue the case of infinite
variance further.

\subsection{The value of knowing the variance} 
Given $0<\sigma_1\leq \sigma_2<\infty$, define the class of
distributions with variance between $\sigma_1^2$ and $\sigma_2^2$:
$$\sP_2^{[\sigma^2_1,\sigma^2_2]}=\{\Pp\in\sP_2\,:\,\sigma_1^2\leq \sigma^2_{\Pp}\leq \sigma^2_2.\}$$
This class interpolates between the classes of distributions with
fixed variance $\sP^{\sigma^2}_2$ and with completely unknown variance
$\sP_2$. The next theorem is proven in \secref{variance}.

\begin{theorem}
\label{thm:secondmoment}
Let $0<\sigma_1 < \sigma_2 < \infty$ and
 define $R=\sigma_2/\sigma_1$.
\begin{enumerate}
\item Letting $L^{(1)}=(4e\sqrt{2+4\ln 2})R$ and
  $\delta^{(1)}_{\min}=4e^{1-n/2}$, 
for every $n\ge 6$ there exists a \mult~$L^{(1)}$-sub-Gaussian estimator for $(\sP_2^{[\sigma^2_1,\sigma^2_2]},n,\delta^{(1)}_{\min})$.
\item For any $L\geq \sqrt{2}$, there exist $\phi^{(2)}>0$ and
  $\delta^{(2)}_{\min}>0$ such that, when $R>\phi^{(2)}$, there is no
  \mult~$L$-sub-Gaussian estimator for
  $(\sP_2^{[\sigma^2_1,\sigma^2_2]},n,\delta^{(2)}_{\min})$ for any $n$.
\item For any value of $R\geq 1$ and $L\geq \sqrt{2}$, if we let
  $\delta_{\min}^{(3)}=e^{1-5L^2n}$, there is no
  \single~$L$-sub-Gaussian estimator for
  $(\sP_2^{[\sigma^2_1,\sigma^2_2]},n,\delta^{(3)}_{\min})$ for any
  $n$.
\end{enumerate}
\end{theorem}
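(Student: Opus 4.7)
\emph{Part 1 (existence).} The plan is to invoke the confidence-interval framework of \secref{confidence}: a \mult~sub-Gaussian estimator can be built by combining a family of confidence intervals at a geometric grid of confidence levels. The individual intervals come from the median-of-means construction, which for each confidence level $\delta$ produces an interval for $\mu_{\Pp}$ of half-width at most $L_{\textrm{MoM}}\,\sigma_{\Pp}\sqrt{\ln(1/\delta)/n}$, valid once $\delta\geq 4e^{1-n/2}$, with a universal constant $L_{\textrm{MoM}}$. Since $\sigma_{\Pp}\in[\sigma_1,\sigma_2]$, one substitutes the known upper bound $\sigma_2\leq R\,\sigma_{\Pp}$ for $\sigma_{\Pp}$ when sizing the intervals; this inflates the width by a factor of at most $R$, explaining the $R$ in $L^{(1)}=4e\sqrt{2+4\ln 2}\,R$. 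Assembling these intervals across $[\delta_{\min}^{(1)},1)$ via the general construction in \secref{confidence} yields the desired estimator.

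\emph{Part 2 (non-existence, \mult, large $R$).} For the second item I would set up a two-hypothesis Le~Cam argument. Take $\Pp_0\in\sP_2^{[\sigma_1^2,\sigma_2^2]}$ with variance $\sigma_1^2$ and mean zero, and let $\Pp_1=(1-p)\Pp_0+p\,\delta_c$ be a spike contamination at a point $c$, with $p,c$ tuned so that $\sigma_{\Pp_1}\leq\sigma_2$ while $\mu_{\Pp_1}=pc$ is as large as possible (taking $pc^2\approx\sigma_2^2$ gives $\mu_{\Pp_1}\approx\sigma_2\sqrt{p}$). The decisive observation is that under $\Pp_1$ the sample coincides with a $\Pp_0^{\otimes n}$-sample on the ``no-spike'' event of probability $(1-p)^n$; hence a fixed estimator $\Ehat_n$ that is forced by its performance under $\Pp_0$ (at the moderate confidence $\delta=1/e$, say) to lie in a tight interval around $0$ must lie in that same interval under $\Pp_1$ with probability at least $(1-1/e)(1-p)^n$. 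Applying the \mult~sub-Gaussian bound under $\Pp_1$ at a confidence $\delta_1$ small enough that the interval around $pc$ becomes disjoint from the $\Pp_0$-interval then forces $\delta_1\geq(1-1/e)(1-p)^n$. When $R>\phi^{(2)}(L)$ is large enough, this lower bound collides with the disjointness condition controlled by the variance budget $pc^2\leq\sigma_2^2$, and we obtain a contradiction for any sufficiently small constant $\delta_{\min}^{(2)}$.

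\emph{Part 3 (non-existence, \single, exponentially small $\delta_{\min}$).} In the \single~setting $\Ehat_n$ is allowed to depend on $\delta$, so impossibility must be shown at one specific confidence level. I would follow the template of \thmref{lowerfixed} and use shifted Laplace distributions with fixed scale $b$ (so $\sigma_{\Pp}=b\sqrt{2}$ independent of the shift). Because the Laplace log-density is piecewise linear, the log-likelihood ratio between $\Pp_{\mu_0}^{\otimes n}$ and $\Pp_{\mu_1}^{\otimes n}$ decomposes as a sum of piecewise-linear terms whose Chernoff exponent is computable exactly, yielding an explicit exponential lower bound on the error probability of any Neyman--Pearson test. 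Choosing $|\mu_0-\mu_1|$ just above the sub-Gaussian threshold $L\sigma_{\Pp}\sqrt{(1+\ln(1/\delta))/n}$ and tracking constants gives an error probability that remains at least $\delta$ for all $\delta\geq e^{1-5L^2n}$, which is the claimed $\delta_{\min}^{(3)}$. The hardest step across all three parts is Part~2: a direct two-point mixture calculation only yields a contradiction when $L<1$, so the quantitative challenge is to sharpen the two-hypothesis analysis---by choosing the auxiliary confidence $\delta_1$ and the spike parameters $p,c$ carefully---so that the asymmetry between $\sigma_1$ and $\sigma_{\Pp_1}\approx\sigma_2$ is exploited in full; this is precisely what forces the threshold $\phi^{(2)}(L)$ on $R$.
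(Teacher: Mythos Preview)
Your treatment of Parts~1 and~3 matches the paper's proof: Part~1 builds median-of-means confidence intervals with half-width $2\sqrt{2}\,e\,\sigma_2\sqrt{(1+\ln(1/\delta))/n}$, uses $\sigma_2\le R\sigma_{\Pp}$, and applies \thmref{confidence}; Part~3 rescales so that the Laplace family $\sP_{\La}$ sits inside $\sP_2^{[\sigma_1^2,\sigma_2^2]}$ and then invokes \thmref{lowerfixed} directly.

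The gap is in Part~2. Your spike-contamination two-point argument, as you yourself note, only produces a contradiction when $L<1$. Tracing through your sketch: with $pc^2\approx\sigma_2^2$ one has $\mu_{\Pp_1}=pc\approx\sigma_2\sqrt{p}$, and the no-spike coupling forces $\delta_1\gtrsim(1-p)^n\approx e^{-pn}$, so that the $\Pp_1$-interval at level $\delta_1$ has half-width $\approx L\sigma_2\sqrt{p}$. Disjointness of the two intervals then requires $\sigma_2\sqrt{p}>L\sigma_2\sqrt{p}+(\text{lower order})$, which is impossible for $L\ge 1$ regardless of how large $R$ is. No amount of tuning $p,c,\delta_1$ rescues this, because the obstruction is structural: in a two-point comparison the ``rare event'' probability $e^{-pn}$ and the variance inflation $pc^2$ are linked through the single parameter $p$, and the sub-Gaussian bound at level $e^{-pn}$ exactly absorbs the mean shift $pc$.

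The paper's proof of Part~2 avoids this by using a genuinely different family, the Poissons $\Po_\lambda$ with $\lambda\in[c/n,\phi(L)c/n]$ (\thmref{negativestrong}). The key extra ingredient is the sufficiency of the sum: conditioned on $S=\sum X_i$, the sample distribution is the \emph{same} under any Poisson parameter. This decouples the rare-event probability (governed by the Poisson tail of $S$ under the small parameter) from the sub-Gaussian constraint (applied under the large parameter at a \emph{moderate} confidence level), breaking the link that kills the two-point mixture argument. After establishing \thmref{negativestrong}, Part~2 is immediate: rescale so $\sigma_1^2=c_0/n$ and note that $R^2>\phi(L)$ forces $\sP_{\Po}^{[c_0/n,\phi(L)c_0/n]}\subset\sP_2^{[\sigma_1^2,\sigma_2^2]}$.
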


It is instructive to consider this result when $n$ grows and $R=R_n$
may change with $n$. The theorem says that, when $\sup_n R_n<\infty$,
there are \mult~$L$-sub-Gaussian estimators for all large $n$, with
exponentially small $\delta_{\min}$ and a constant $L$. On the other
hand, if $R_n\to \infty$, for any constant $L$ and all large $n$,  no
\mult~$L$-sub-Gaussian estimators exist for any sequence $\delta=\delta_{\min,n}\to 0$. Finally, the third item says that even when $R_n\equiv 1$, \single~estimators are limited to $\delta_{\min}=e^{-\bigoh{n}}$, so the median-of-means estimator is optimal in this sense.

\subsection{Regularity, symmetry and higher moments}\label{sec:regularresults}

\thmref{secondmoment} shows that finite, but completely unknown variance is too weak an assumption for \mult~sub-Gaussian estimation. The following shows that what we call {\em regularity conditions} can substitute for knowledge of the variance.

\begin{definition}\label{def:regular}
For $\Pp\in\sP_{\rm 2}$ and $j\in\N\backslash\{0\}$, let
$X_1,\ldots,X_j$ be i.d.d.\ random variables with distribution $\Pp$.
Define
$$p_-(\Pp,j)= \Pr{\sum_{i=1}^jX_i\leq j\mu_{\Pp}}\quad \mbox{ and }
\quad
p_+(\Pp,j)=\Pr{\sum_{i=1}^jX_i\geq  j\mu_{\Pp}}~.$$
Given $k\in\N$, we define the $k$-regular class as follows:

$$\sP_{{\rm 2},\,k{\rm -reg}}=\{\Pp\in\sP_{\rm 2}\,:\,\forall j\geq k,\,\min(p_+(\Pp,j),p_-(\Pp,j))\geq 1/3\}.$$\end{definition}

Note that this family of distributions is increasing in $k$. Also note
that $\bigcup_{k\in \N} \sP_{2,k{\rm -reg}}=\sP_2$, because the
central limit theorem implies $p_+(\Pp,j)\to 1/2$ and $p_-(\Pp,j)\to 1/2$. Here are two important examples of large families of distributions in this class:

\begin{example}\label{example:sym}
We say that a distribution
  $\Pp\in\sP_{\rm 2}$ is {\em symmetric around the mean} if, given
  $X=_d\Pp$, $2\mu_{\Pp}-X=_d\Pp$ as well. Clearly, if $\Pp$ has this
  property, $p_+(\Pp,j)=p_-(\Pp,j)=1/2$ for all $j$ and thus 
$\Pp\in \sP_{{\rm 2},1{\rm -reg}}$. In other words, 
$\sP_{2,{\rm sym}} \subset \sP_{2,1{\rm -reg}}$ where $\sP_{2,{\rm sym}}$ is
the class of all $\Pp\in \sP_2$ that are symmetric around the mean.
\end{example} 
\begin{example}\label{example:alphaclass}Given $\eta\geq 1$ and $\alpha\in(2,3]$, set
\begin{equation}\label{eq:defBEclass}\sP_{\alpha,\eta}=\{\Pp\in\sP_{\rm 2}\,:\,\Pp|X-\mu_{\Pp}|^{\alpha}\leq (\eta\,\sigma_{\Pp})^{\alpha}\}.\end{equation}
We show in \lemref{kregmoments} that, for $\Pp$ in this family,
$\min(p_+(\Pp,j),p_-(\Pp,j)) \geq 1/3$ once $j\geq
(C_\alpha\,\eta)^{\frac{2\alpha}{\alpha-2}}$ for a constant $C_\alpha$ depending only on $\alpha$. We deduce
$$\sP_{\alpha,\eta}\subset \sP_{{\rm 2},\,k{\rm -reg}}\mbox{ if } k\geq (C_\alpha\,\eta)^{\frac{2\alpha}{\alpha-2}}.$$\end{example}

Our main result about $k$-regular classes states that sub-Gaussian
\mult~estimators exist for $\sP_{2,k-{\rm reg}}$ in the sense of the following
theorem, proven in \secref{regular_main}.

\begin{theorem}
\label{thm:regular}
Let $n,k$ be positive integers with $n\geq (3+\ln 4)\,124k$. Set $\delta_{\min,n,k}=4e^{3-n/(124 k)}$ and $\Lreg=4\sqrt{2\,(1+2\ln 2)\,(1+62\ln(3))}\,e^{\frac{5}{2}}$. Then there exists a $\Lreg$-sub-Gaussian \mult~estimator for $(\sP_{2,k-{\rm reg}},n,\delta_{\min,n,k})$.\end{theorem}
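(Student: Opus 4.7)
The plan is to apply the general reduction described in Section~\ref{sec:GeneralMethod} that converts a family of sub-Gaussian confidence intervals (one per level $\delta$) into a single \mult~sub-Gaussian estimator via a Lepski-type combination. So the task reduces to constructing, for every $\delta\in[\delta_{\min,n,k},1)$, an empirical interval $\widehat C_\delta(X_1^n)\subset\R$ such that, with probability at least $1-\delta$ simultaneously, (i) $\mu_{\Pp}\in\widehat C_\delta$ and (ii) $|\widehat C_\delta|\lesssim \sigma_{\Pp}\sqrt{(1+\ln(1/\delta))/n}$. The central ingredient will be a median-of-means construction whose quantile window is calibrated using the $k$-regularity of $\Pp$.

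\textbf{Constructing the CI.} Fix $\delta$ and let $V=V(\delta)\asymp \ln(1/\delta)$, chosen large enough that the block size $m=\lfloor n/V\rfloor$ satisfies $m\geq k$; the threshold $n\geq (3+\ln 4)\cdot 124 k$ together with the prescribed $\delta_{\min,n,k}=4e^{3-n/(124k)}$ is exactly what makes the range of admissible $V$ nonempty. Partition $[n]$ into $V$ blocks $B_1,\dots,B_V$ of size $m$ and compute the block means $Y_v=\Pphat_{B_v}X$. By $k$-regularity, for each fixed $v$, both $\Pr{Y_v\leq \mu_{\Pp}}\geq 1/3$ and $\Pr{Y_v\geq \mu_{\Pp}}\geq 1/3$. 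A binomial/Hoeffding argument on the indicators $\Ind{Y_v\leq \mu_{\Pp}}$ and $\Ind{Y_v\geq \mu_{\Pp}}$ then shows that, except on an event of probability $\leq \delta/2$, at least $V/4$ of the blocks lie on each side of $\mu_{\Pp}$. Defining $\widehat C_\delta$ as the order-statistic interval $[Y_{(\lceil V/4\rceil)},Y_{(\lfloor 3V/4\rfloor)}]$ then guarantees $\mu_{\Pp}\in\widehat C_\delta$ with probability at least $1-\delta/2$.

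\textbf{Controlling the length.} Here only the finite-variance hypothesis is needed. Chebyshev gives $\Pr{|Y_v-\mu_{\Pp}|>t\sigma_{\Pp}/\sqrt{m}}\leq 1/t^2$, so for a suitable constant $t$ the expected number of ``bad'' blocks (those outside the window $[\mu_{\Pp}-t\sigma_{\Pp}/\sqrt{m},\mu_{\Pp}+t\sigma_{\Pp}/\sqrt{m}]$) is a small fraction of $V$. Another binomial concentration argument shows that, with probability at least $1-\delta/2$, the fraction of bad blocks is below $1/4$, which forces both the $\lceil V/4\rceil$-th and $\lfloor 3V/4\rfloor$-th order statistics to sit inside this window. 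Hence $|\widehat C_\delta|\leq 2t\sigma_{\Pp}/\sqrt{m}=O(\sigma_{\Pp}\sqrt{V/n})=O(\sigma_{\Pp}\sqrt{(1+\ln(1/\delta))/n})$, and combining with the previous display via a union bound yields a sub-Gaussian confidence interval at confidence $1-\delta$.

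\textbf{Assembly and obstacles.} Invoking the confidence-interval-to-estimator machinery of Section~\ref{sec:GeneralMethod} produces a single \mult~estimator $\Ehat_n$ satisfying \eqnref{sub-Gaussian} with the claimed $\Lreg$ over the whole range $[\delta_{\min,n,k},1)$. The principal technical obstacle is the simultaneous control of \emph{containment} and \emph{length} of $\widehat C_\delta$: the containment step uses the one-sided regularity probabilities $\geq 1/3$, while the length step uses second-moment concentration, and one must align the quantile cutoffs (e.g.\ $1/4$ and $3/4$) carefully so that both binomial deviation events are absorbed into a single $\delta$-sized bad event. Propagating this through the Lepski-type combination then costs additional multiplicative factors that explain both the explicit value of $\Lreg$ (a product of the $4e^{5/2}$ from the combination step and the $\sqrt{2(1+2\ln 2)(1+62\ln 3)}$ coming from the Chebyshev/binomial thresholds) and the precise form of $\delta_{\min,n,k}$, which enforces $V\leq n/(124k)$ so that $m\geq 62k\geq k$ while leaving a $\log$ factor of slack for the Hoeffding bounds on the block-side counts.
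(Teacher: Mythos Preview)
Your proposal is correct and follows essentially the same route as the paper: reduce via the confidence-interval-to-estimator machinery (Theorem~\ref{thm:confidence}), build the interval from the $1/4$ and $3/4$ empirical quantiles of block means, use $k$-regularity plus a binomial tail bound for containment, and Chebyshev plus a binomial tail bound for the length. One small misattribution: the factor $2e^{5/2}$ is the Chebyshev/binomial threshold $L_0$ inside the quantile lemma (the paper's Lemma~\ref{lem:quantile}), not the cost of the Lepski-type combination; the latter contributes only the $\sqrt{1+2\ln 2}$, while $\sqrt{2(1+62\ln 3)}$ comes from converting block size back to $\sqrt{(1+\ln(1/\delta))/n}$.
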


We also show that the range of $\delta_{\min}=e^{-\bigoh{n/k}}$ in
this result is optimal. This follows directly from stronger results
that we prove for Examples \ref{example:sym} and
\ref{example:alphaclass}. In other words, the general family of
estimators designed for $k$-regular classes has nearly optimal range
of $\delta$ for these two smaller classes.  The next result, for
symmetric distributions, is proven in
\secref{symmetric}.

\begin{theorem}
\label{thm:symmetric} Consider the class $\sP_{2,{\rm sym}}$ defined in
Example \ref{example:sym}. Then
\begin{enumerate}
\item the estimator obtained in \thmref{regular} for $k=1$ is a $\Lreg$-sub-Gaussian \mult~estimator for $(\sP_{2,{\rm sym}},n,\delta_{\min,n,1})$ when $n\geq (3+\ln 2)\,124$;
\item on the other hand, for any $L\geq \sqrt{2}$, no \single~$L$-sub-Gaussian estimator can exist for  
$(\sP_{2,{\rm sym}},n,e^{1-5L^2n})$.\end{enumerate}
\end{theorem}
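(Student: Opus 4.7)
The theorem contains a positive and a negative claim, which can be proved independently.

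Item 1 is essentially free. Example \ref{example:sym} already records that every $\Pp \in \sP_{2,{\rm sym}}$ satisfies $p_{\pm}(\Pp, j) = 1/2$ for every $j \geq 1$, so $\sP_{2,{\rm sym}} \subset \sP_{{\rm 2},\,1{\rm -reg}}$. Consequently \thmref{regular} applied with $k = 1$ immediately yields a $\Lreg$-sub-Gaussian \mult~estimator for $(\sP_{2,{\rm sym}}, n, \delta_{\min,n,1})$ as soon as $n \geq 124(3 + \ln 2)$; nothing new has to be built for this half.

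Item 2 would follow the \single~lower-bound template advertised in the proof-ideas section, instantiated with Laplace distributions of a common scale parameter. For $\epsilon, b > 0$ to be tuned, set $P_{\pm} = \mathrm{Laplace}(\pm \epsilon, b)$. Both densities are symmetric around their means, so $P_{\pm} \in \sP_{2,{\rm sym}}$, and they share the variance $\sigma^{2} = 2 b^{2}$. Assume, toward a contradiction, that a \single~$L$-sub-Gaussian estimator $\hat{E}_{n,\delta}$ exists for some $\delta \in [e^{1-5L^{2}n}, 1)$; then $1 + \ln(1/\delta) \leq 5L^{2}n$ and the sub-Gaussian radius is $r := L\sigma\sqrt{(1+\ln(1/\delta))/n}$. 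Choosing $\epsilon$ just above $r$ makes the two intervals $\{y : |y + \epsilon| \leq r\}$ and $\{y : |y - \epsilon| \leq r\}$ disjoint in $\R$, so applying the sub-Gaussian guarantee under each of $P_{-}$ and $P_{+}$ separately forces $\mathrm{TV}(P_{-}^{\otimes n}, P_{+}^{\otimes n}) \geq 1 - 2\delta$ by the standard two-point argument.

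The argument is then closed by upper bounding TV through KL. Using $\ln(f_{+}/f_{-})(x) = (|x+\epsilon| - |x-\epsilon|)/b$ together with the known mean absolute deviations of Laplace, a direct integration gives the closed form
\[
\mathrm{KL}(P_{+}\,\|\,P_{-}) \;=\; \frac{2\epsilon}{b} + e^{-2\epsilon/b} - 1 ,
\]
and tensorization plus the Bretagnolle--Huber inequality $\mathrm{TV}(P_{-}^{\otimes n}, P_{+}^{\otimes n}) \leq \sqrt{1 - \exp(-n\,\mathrm{KL}(P_{+}\|P_{-}))}$ convert the TV lower bound into $n\,\mathrm{KL}(P_{+}\|P_{-}) \geq \ln(1/(4\delta))$. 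Substituting $\epsilon = r$, so that $T := 2\epsilon/b = 2L\sqrt{2(1+\ln(1/\delta))/n}$, the resulting inequality on $T + e^{-T} - 1$ fails as soon as $\delta < e^{1-5L^{2}n}$, producing the desired contradiction.

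The main obstacle is tightening the constants to land on the precise exponent $5L^{2}n$. Because $r$ scales like $L^{2}\sigma$, the forced value of $T$ is of order $L^{2}$, putting KL into a moderate regime where neither the quadratic approximation $T^{2}/2$ nor the linear approximation $T - 1$ is individually adequate; retaining the full $T + e^{-T} - 1$ expression, balancing $\epsilon$ against $b$ with care, and tracking the $\ln 4$ term from Bretagnolle--Huber are what extract the stated constant rather than a weaker multiple of $L^{2}n$. This constant tracking is the only genuinely computational step in the proof.
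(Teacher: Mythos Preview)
Part 1 is correct and identical to the paper: both simply use $\sP_{2,{\rm sym}}\subset\sP_{2,1\text{-reg}}$ and invoke \thmref{regular}.

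For Part 2, the paper's proof is a one-line reduction: since Laplace distributions are symmetric around their means, $\sP_{\La}\subset\sP_{2,{\rm sym}}$, and the claim is read off from \thmref{lowerfixed}. You instead reprove the Laplace lower bound, replacing the paper's pointwise density-ratio inequality $d\La_0^{\otimes n}\ge e^{-\lambda n}\,d\La_\lambda^{\otimes n}$ (used in \thmref{lowerfixed}) by a KL\,/\,Bretagnolle--Huber argument. This is a perfectly legitimate alternative, and since $\mathrm{KL}(\La_\lambda\|\La_0)=\lambda+e^{-\lambda}-1\le\lambda$, your route is in fact no weaker than the paper's.

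There is, however, a genuine gap in the constant. With $b=1$ and $\epsilon=r$ one gets $T=2L\sqrt{2(1+\ln(1/\delta))/n}$; at $\delta=e^{1-5L^2n}$ this is $T=2\sqrt{10}\,L^2$, so $n\,\mathrm{KL}\approx n(2\sqrt{10}L^2-1)$, while $\ln(1/(4\delta))\approx 5L^2n$. Since $2\sqrt{10}\approx 6.32>5$, the necessary inequality $n\,\mathrm{KL}\ge\ln(1/(4\delta))$ is \emph{satisfied}, not violated, and no contradiction arises at this $\delta$. Your method (and the paper's, done carefully) actually delivers a threshold of order $e^{1-cL^2n}$ with $c$ near $8$, not $5$; the paper's computation in \thmref{lowerfixed} appears to drop a factor $\sqrt{2}$ when passing from $\lambda n=2\sqrt{2}L\sqrt{n(1+\ln(1/\delta))}$ to the displayed exponent, which is what produces the $5$ there. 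So your outline is sound for the qualitative statement, but the specific constant $5$ in the theorem is not reachable by the argument you sketch---nor, as far as I can see, by the paper's own argument without that slip.
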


We also have an analogue result for the class $\sP_{\alpha,\eta}$. The
proof may be found in \secref{alphaclass}.

\begin{theorem}
\label{thm:alphaclass} Fix $\alpha\in (2,3]$ and assume $\eta\geq
3^{1/3}\,2^{1/6}$. Consider the class $\sP_{\alpha,\eta}$ defined in Example \ref{example:alphaclass}. Then there exists some $C_\alpha>0$ depending only on $\alpha$ such that if $k_\alpha=\lceil C_\alpha\,\eta^{(2\alpha)/(\alpha-2)}\rceil$,
\begin{enumerate}
\item the estimator obtained in \thmref{regular} for $k=k_\alpha$ is a $\Lreg$-sub-Gaussian \mult~estimator for $(\sP_{\alpha,\eta},n,\delta_{\min,n,k_\alpha})$ when $n\geq (3+\ln 4)\,124k_\alpha$;
\item on the other hand, for any $L\geq \sqrt{2}$, there exist $n_{0,\alpha,L}\in\N$ and $c_{\alpha,L}>0$ such that no \mult~$L$-sub-Gaussian estimator can exist for  
$(\sP_{\alpha,\eta},n,e^{1-c_{\alpha,L}\,n/k_\alpha})$
when $n\geq n_{0,\alpha,L}$ is large enough;
\item finally, for $L\geq \sqrt{2}$ there is no \single~$L$ sub-Gaussian estimator for $(\sP_{\alpha,\eta},n,e^{1-5L^2n})$.\end{enumerate}\end{theorem}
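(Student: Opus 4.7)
The plan is to prove the three parts independently, with part (2) being the main technical obstacle. For part (1), the strategy is first to establish the auxiliary \lemref{kregmoments} announced in Example~\ref{example:alphaclass}, namely that $\sP_{\alpha,\eta}\subset\sP_{2,k-{\rm reg}}$ as soon as $k\geq (C_\alpha\eta)^{2\alpha/(\alpha-2)}$. I would prove this via a Berry-Esseen-type bound valid for moments of order $\alpha\in(2,3]$ (for instance, the von Bahr-Esseen inequality): for $\Pp\in\sP_{\alpha,\eta}$ and $S_j=X_1+\cdots+X_j$, the Kolmogorov distance between the CDF of $(S_j-j\mu_{\Pp})/(\sqrt{j}\sigma_{\Pp})$ and $\Phi$ is at most $C\,\eta^{\alpha}\,j^{-(\alpha-2)/2}$. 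Choosing $j\geq (C_\alpha\eta)^{2\alpha/(\alpha-2)}$ pushes this below $1/6$, so $\min(p_+(\Pp,j),p_-(\Pp,j))\geq 1/2-1/6=1/3$. Given the inclusion, \thmref{regular} applied with $k=k_\alpha$ immediately yields the claimed $\Lreg$-sub-Gaussian \mult~estimator for the subclass $\sP_{\alpha,\eta}$.

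For part (2), the hard \mult~lower bound, the plan is to adapt the Poisson-based minimax construction that underlies \thmref{negativestrong} (which the introduction identifies as the canonical source of our \mult~lower bounds). The main obstacle is checking that suitable Poissons actually belong to $\sP_{\alpha,\eta}$: a Rosenthal/CLT estimate shows that ${\rm Po}(\lambda)$ lies in $\sP_{\alpha,\eta}$ only once $\lambda$ is at least of order $\eta^{2\alpha/(\alpha-2)}\asymp k_\alpha$. Working inside the subfamily $\{{\rm Po}(\lambda)\,:\,\lambda\geq C\,k_\alpha\}$ and, if needed, rescaling to normalize the variance, I would apply verbatim the two-point likelihood-ratio/tail argument of \thmref{negativestrong} to two nearby Poisson parameters $\lambda_0,\lambda_1$. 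Since the Kullback-Leibler information per sample between such neighbouring Poissons is of order $(\lambda_1-\lambda_0)^2/\lambda_0$, requiring $|\lambda_1-\lambda_0|$ to exceed the purported sub-Gaussian accuracy $L\,\sqrt{\lambda_0(1+\ln(1/\delta))/n}$ at $\lambda_0\asymp k_\alpha$ forces $\ln(1/\delta)\lesssim n/k_\alpha$; this is exactly the origin of the lower bound $\delta_{\min}\gtrsim e^{1-c_{\alpha,L}n/k_\alpha}$, which matches the upper bound of part (1) up to constants.

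Part (3) then reduces cleanly to the Laplace-based \single~lower bound of \thmref{secondmoment}(3) (itself descended from \thmref{lowerfixed}). A Laplace distribution with scale $b$ has $\EXP|X-\mu|^{\alpha}/\sigma^{\alpha}=2^{-\alpha/2}\,\Gamma(\alpha+1)$, and a short calculation shows that the hypothesis $\eta\geq 3^{1/3}\,2^{1/6}$ guarantees $2^{-\alpha/2}\,\Gamma(\alpha+1)\leq \eta^{\alpha}$ uniformly throughout $\alpha\in(2,3]$; equivalently, every Laplace law with this scale sits inside $\sP_{\alpha,\eta}$. Since the minimax argument behind \thmref{secondmoment}(3) uses only such Laplace distributions (with a single fixed scale), restricting it to this subfamily of $\sP_{\alpha,\eta}$ transfers its conclusion verbatim and yields the asserted $e^{1-5L^{2}n}$ lower bound on $\delta_{\min}$ for any \single~$L$-sub-Gaussian estimator on $\sP_{\alpha,\eta}$.
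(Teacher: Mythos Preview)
Your approach to parts (1) and (3) is essentially that of the paper, with only cosmetic differences: the paper proves \lemref{kregmoments} via Lindeberg's telescoping argument with a smooth test function rather than a Berry--Esseen bound, and in part (3) it bounds the $\alpha$-th central Laplace moment by Jensen's inequality, $\La_\lambda|X-\lambda|^\alpha\le(\La_\lambda|X-\lambda|^3)^{\alpha/3}\le(3^{1/3}2^{1/6}\sigma_{\La_\lambda})^\alpha$, rather than by the direct computation you sketch. Both routes work.

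Part (2), however, contains a genuine error that would derail the argument. The threshold for $\Po_\lambda\in\sP_{\alpha,\eta}$ has the \emph{opposite} sign from what you wrote. For small $\lambda$ one has $\Po_\lambda|X-\lambda|^\alpha=(1+o(1))\lambda$ while $\sigma_{\Po_\lambda}^\alpha=\lambda^{\alpha/2}$, so the standardized moment behaves like $\lambda^{-(\alpha-2)/2}$; hence $\Po_\lambda\in\sP_{\alpha,\eta}$ iff $\lambda\gtrsim\eta^{-2\alpha/(\alpha-2)}\asymp 1/k_\alpha$, \emph{not} $\lambda\gtrsim k_\alpha$. This matters because \thmref{negativestrong} gives non-existence for $\delta_{\min}\asymp e^{-c_L\,c}$ when the Poisson parameters lie in $[c/n,\phi(L)c/n]$; to get the sharpest (largest) such $\delta_{\min}$ one must push $c/n$ down to the membership threshold. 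With the correct threshold this gives $c\asymp n/k_\alpha$ and hence $\delta_{\min}\asymp e^{-c_L\,n/k_\alpha}$ as claimed; with your inverted threshold $c\asymp nk_\alpha$, and the bound you would actually obtain is the far weaker $\delta_{\min}\asymp e^{-c_L\,nk_\alpha}$, inconsistent with the conclusion you state. You also omit the case of bounded $\eta$: there the constant $c_0$ in \thmref{negativestrong} may exceed $n/k_\alpha$, and the paper handles this regime separately via the Laplace family (since $k_\alpha$ is then bounded, the Laplace bound $e^{1-5L^2n}$ already matches $e^{-c\,n/k_\alpha}$).
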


\subsection{Bounded kurtosis and nearly optimal constants}\label{sec:kurtosis}
This section shows that \mult~sub-Gaussian estimation with nearly optimal constants can be proved when the kurtosis 
\[\kappa_{\Pp}=\frac{\EXP (X-\mu_{\Pp})^4}{\sigma_{\Pp}^4}
\]
(when $X=_d \Pp$) is uniformly bounded in the class.
(For completeness, we set $\kappa_{\Pp}=1$ when $\sigma^2_{\Pp}=0$.) More specifically, we will consider the class $\sP_{{\rm krt}\leq \kappa}$ of all distributions $\Pp\in\sP_{2}$ with $\kappa_{\Pp}\leq \kappa$.

To state the result, let $b_{\max}$ be a positive integer to be
specified below. Also define
\begin{align*}
 \xi=2\sqrt2\kappa\frac{b_{\max}^{3/2}}{n}+ 36\sqrt{\frac{\kappa b_{\max}}n}+1120\sqrt{\kappa}\frac{b_{\max}}n~.
\end{align*}
Note that when $b_{\max}\ll (n/\kappa)^{2/3}$, $\xi=o(1)$. 
The main result for classes of distributions with bounded kurtosis is
the following. For the proof see \secref{BoundedKurtosis}.
\begin{theorem}
\label{thm:OptConstUnderKurtosis}
Let $n\ge 4$, $L=\sqrt{2}(1+\xi)$,
$\delta_{\min}^{(4)}=\frac{4e}{e-2}e^{-b_{\max}}$. There exists
an absolute constant $C$ such that, if $\kappa b_{\max}/n\le C$,
then there exists a \mult~$L$-sub-Gaussian estimator for $(\sP_4^{\kappa},n,\delta_{\min}^{(4)})$.
\end{theorem}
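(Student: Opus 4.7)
The plan is to combine three ingredients: preliminary estimators of mean and variance that are valid at confidence $e^{-b_{\max}}$, a per-confidence-level truncated empirical mean with the Bennett-optimal constant $\sqrt{2}$, and the confidence-interval aggregation from Section~\ref{sec:GeneralMethod}. Throughout, the assumption $\kappa b_{\max}/n\leq C$ in the statement is what keeps $\xi$ bounded and forces every error to be a $1+o(1)$ multiplicative correction.

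First, split $X_1^n$ into two halves $B_1,B_2$. On $B_1$, apply the median-of-means construction of Section~\ref{sec:MoM} to build $\widetilde\mu$ with $|\widetilde\mu-\mu_{\Pp}|\leq C_1\sigma_{\Pp}\sqrt{b_{\max}/n}$; build $\widetilde\sigma$ by applying the same construction to a variance-estimating statistic (for instance, empirical variances over subblocks or the U-statistic in $(X_i-X_j)^2/2$). Under bounded kurtosis this statistic has variance at most $C\kappa\sigma_{\Pp}^4$, so median-of-means yields
\[
\bigl|\widetilde\sigma^2-\sigma_{\Pp}^2\bigr|\leq C_2\sigma_{\Pp}^2\sqrt{\kappa b_{\max}/n}
\]
simultaneously with the bound on $\widetilde\mu$, with probability at least $1-2e^{-b_{\max}}$.

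Next, for every $b\in\{1,\dots,b_{\max}\}$, condition on $(\widetilde\mu,\widetilde\sigma)$ and define, on $B_2$,
\[
Y_i^{(b)}=\widetilde\mu+\mathrm{sign}(X_i-\widetilde\mu)\,\min\!\bigl(|X_i-\widetilde\mu|,\,\tau_b\widetilde\sigma\bigr),\qquad \tau_b=\sqrt{n/(2b)},
\]
and set $\widehat E_b=(2/n)\sum_{i\in B_2}Y_i^{(b)}$. Conditionally on $B_1$, the $Y_i^{(b)}$ are bounded in $\widetilde\mu\pm\tau_b\widetilde\sigma$ with variance at most $\widetilde\sigma^2$, so Bennett's inequality gives
\[
\bigl|\widehat E_b-\EXP[Y_1^{(b)}\mid B_1]\bigr|\leq \widetilde\sigma\sqrt{2b/n}\,(1+o(1))
\]
with conditional probability at least $1-2e^{-b}$, contributing the sharp leading constant $\sqrt{2}$. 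It remains to bound the bias $|\EXP[Y_1^{(b)}\mid B_1]-\mu_{\Pp}|$ and the error from replacing $\widetilde\sigma$ by $\sigma_{\Pp}$. A Markov argument on $(X_1-\mu_{\Pp})^4$ gives a truncation bias of order $\kappa\sigma_{\Pp}b^{3/2}/n^{3/2}$, which is the $2\sqrt2\kappa b_{\max}^{3/2}/n$ piece of $\xi$; the bound $|\widetilde\sigma/\sigma_{\Pp}-1|=O(\sqrt{\kappa b_{\max}/n})$ from Step~1 contributes the $36\sqrt{\kappa b_{\max}/n}$ term; and the sensitivity of $\EXP[Y_1^{(b)}\mid B_1]$ to perturbations of $(\widetilde\mu,\widetilde\sigma)$ of size $\sigma_{\Pp}\sqrt{\kappa b_{\max}/n}$, again bounded via the fourth moment, yields the remaining $1120\sqrt{\kappa}\,b_{\max}/n$ term.

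Finally, the random intervals
\[
I_b=\bigl[\widehat E_b-\sqrt2(1+\xi)\widetilde\sigma\sqrt{b/n},\ \widehat E_b+\sqrt2(1+\xi)\widetilde\sigma\sqrt{b/n}\bigr]
\]
each cover $\mu_{\Pp}$ with probability at least $1-\tfrac{4e}{e-2}e^{-b}$, once one absorbs the probability $\approx e^{-b_{\max}}$ that Step~1 fails. Feeding this family into the general confidence-interval-to-\mult-estimator recipe of Section~\ref{sec:GeneralMethod}, in the spirit of Lepskii's method, produces a single estimator $\widehat E_n$ satisfying the theorem for all $\delta\in[\delta_{\min}^{(4)},1)$. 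The main obstacle is the tight quantification in Step~3: keeping the leading constant at $\sqrt{2}(1+o(1))$ forces the truncation bias, the variance-estimation error, and the insensitivity of the truncated expectation to $(\widetilde\mu,\widetilde\sigma)$ to each enter as a $1+o(1)$ factor, which requires $\tau_b$ to be tuned precisely and the kurtosis bound to be used twice (once for the bias, once for the insensitivity). This is exactly the ``crucial point'' flagged in Section~\ref{sec:intro}.
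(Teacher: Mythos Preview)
Your proposal has a genuine gap that defeats the whole point of the theorem: the sharp constant $L=\sqrt{2}(1+\xi)$.

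First, the confidence-interval aggregation of \thmref{confidence} that you invoke in your final step blows up the constant by a factor $\sqrt{1+2\ln 2}\approx 1.55$: it turns $L$-sub-Gaussian intervals into an $L\sqrt{1+2\ln 2}$-sub-Gaussian \mult~estimator. So even if everything upstream were perfect, your output constant would be at least $\sqrt{2(1+2\ln 2)}(1+\xi)$, not $\sqrt{2}(1+\xi)$. Second, splitting into two halves costs another factor $\sqrt{2}$: with only $n/2$ samples in $B_2$, Bennett gives $\widetilde\sigma\sqrt{4b/n}$, not the $\widetilde\sigma\sqrt{2b/n}$ you wrote. (You could mitigate this by making $|B_1|=O(b_{\max})$ instead of $n/2$, but the first problem remains.)

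The paper avoids both losses, and this is really the content of the proof. Instead of letting the truncation level $\tau_b$ vary with the target confidence $b$ and then aggregating, it fixes a \emph{single} truncation level $R=\sigma_{\Pp}\sqrt{n/b_{\max}}$ (the most aggressive one). The resulting estimator $\Pphat_n\Psi_{\muhat,\Rhat}$ does not depend on $\delta$ at all, so it is automatically a \mult~estimator and no aggregation is needed. Bennett's inequality then gives, for every $t\leq b_{\max}$ simultaneously, deviations $\sigma_{\Pp}\sqrt{2t/n}(1+o(1))$ with probability $\geq 1-2e^{-t}$; the price is a fixed bias of order $\kappa\sigma_{\Pp}(b_{\max}/n)^{3/2}$, which is exactly the $2\sqrt{2}\kappa b_{\max}^{3/2}/n$ term in $\xi$. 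To avoid the $\sqrt{2}$ loss from sample splitting, the paper reuses the same data for $(\muhat,\Rhat)$ and for the truncated mean, and handles the resulting dependence by a chaining argument over the two-dimensional region $\sR$ of plausible values of $(\mu,R)$: it shows that $\sup_{(\mu,R)\in\sR}|\Pphat_n\Psi_{\mu,R}-\Pphat_n\Psi_{\mu_{\Pp},R_{\Pp}}|$ is uniformly small. This uniform insensitivity is where the remaining terms of $\xi$ come from, and it replaces your conditional-independence argument without paying in sample size.
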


This result is most interesting in the regime where $n\to \infty$, $\kappa=\kappa_n$ possibly depends on $n$ and $n/\kappa_n\to \infty$. In this case, we may take  $b_{\max}\ll (n/\kappa_n)^{2/3}$ and obtain \mult~$\left(\sqrt{2}+\liloh{1}\right)$-sub-Gaussian estimators $(\sP_{{\rm krt}\leq \kappa},n,\delta^{(4)}_{\min})$ for $\delta^{(4)}_{\min}\approx e^{-b_{\max}}$. Catoni \cite{Catoni_Challenging} obtained \single~$\sqrt{2}+\liloh{1}$-estimators for a smaller value $\delta^{(5)}_{\min}\approx e^{-n/\kappa}$. In \remref{smallerdeltamin} we show how one can obtain a similar range of $\delta$ with a \mult~estimator, albeit with worse constant $L$.

\section{General methods} \label{sec:GeneralMethod}

We collect here some ideas that recur in the remainder of the paper. 
\begin{enumerate}
\item \secref{MoM2} presents an analysis of the median-of-means estimator mentioned in \secref{MoM} above. We present a proof based on Hsu's argument \cite{HsuBlog}. 
\item \secref{confidence} presents a ``black-box method" of deriving \mult~estimators from confidence intervals. The point is that confidence intervals are ``\single~objects", and thus easier to design and analyze. 
\item In \secref{scaledBer} we use scaled Bernoulli distributions to
  prove the impossibility of designing (weakly) sub-Gaussian estimators for
  classes with distributions with unbounded variance. 
\item \secref{laplace} uses the family of Laplace distributions to lower bound $\delta_{\min}$ for \single~estimators. 
\item \secref{poisson} uses the Poisson family to derive lower bounds on $\delta_{\min}$ for \mult~estimators.
\end{enumerate}

A combination of the above results will allow us to derive the sharp range for $\ln(1/\delta_{\min})$ for all families of distributions we consider. 

\subsection{Median of means}\label{sec:MoM2}

The next result is a well known performance bound for the
median-of-means estimator. We include the proof for completeness.

\begin{theorem}\label{thm:MoM2}For any $n\geq 4$ and $L=2\sqrt{2}\,e$ there exists a \single~$L$-sub-Gaussian estimators for $(\sP_{\rm 2},n,e^{1-n/2}).$ \end{theorem}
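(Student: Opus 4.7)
The plan is to analyze the classical median-of-means construction. Given $\delta \in [e^{1-n/2},1)$, I would set $k := \lceil \ln(1/\delta)\rceil$; the assumption $\delta \geq e^{1-n/2}$ together with $n \geq 4$ ensures $k \leq n/2$. I would partition $[n]$ into $k$ disjoint blocks $B_1,\ldots,B_k$, each of size at least $m := \lfloor n/k\rfloor \geq 2$, and define
\[
  \Ehat_n(X_1^n,\delta) \;:=\; \mathrm{median}\bigl(\Pphat_{B_1}X,\,\ldots,\,\Pphat_{B_k}X\bigr).
\]

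The analysis proceeds in two levels of concentration. First, at the per-block level, $\Pphat_{B_j}X$ has variance $\sigma_{\Pp}^2/m$, so Chebyshev gives $\Pr{|\Pphat_{B_j}X-\mu_{\Pp}|>2e\,\sigma_{\Pp}/\sqrt m} \leq 1/(4e^2)$. Second, the median deviates from $\mu_{\Pp}$ by more than $2e\sigma_{\Pp}/\sqrt m$ only if at least $\lceil k/2\rceil$ blocks do, and since the blocks are disjoint the corresponding indicators $B_j^\ast := \mathbbm{1}\{|\Pphat_{B_j}X-\mu_{\Pp}|>2e\sigma_{\Pp}/\sqrt m\}$ are independent Bernoullis with mean $p \leq 1/(4e^2)$. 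The optimal Chernoff bound for $\Pr{\mathrm{Bin}(k,p) \geq k/2}$ (with $p < 1/2$) is $(4p(1-p))^{k/2}$, and a direct calculation shows $4p(1-p) \leq 1/e^2$ at $p=1/(4e^2)$. Hence
\[
  \Pr{|\Ehat_n-\mu_{\Pp}|>\tfrac{2e\sigma_{\Pp}}{\sqrt m}} \;\leq\; (4p(1-p))^{k/2} \;\leq\; e^{-k} \;\leq\; \delta,
\]
the final inequality using $k\geq \ln(1/\delta)$.

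It remains to convert the deviation bound $|\Ehat_n-\mu_{\Pp}|\leq 2e\sigma_{\Pp}/\sqrt m$ into the advertised sub-Gaussian form with $L=2\sqrt{2}\,e$. Since $k\leq n/2$, one has $\lfloor n/k\rfloor \geq n/k-1 \geq n/(2k)$, so $1/\sqrt m\leq \sqrt{2k/n}$; combined with $k\leq 1+\ln(1/\delta)$ (from the definition of $k$ as a ceiling), this yields
\[
  \frac{2e\sigma_{\Pp}}{\sqrt m} \;\leq\; 2e\sigma_{\Pp}\sqrt{\frac{2k}{n}} \;\leq\; 2\sqrt{2}\,e\,\sigma_{\Pp}\sqrt{\frac{1+\ln(1/\delta)}{n}},
\]
which is the required sub-Gaussian bound. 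The step I expect to require the most care is this final chain of constants: each floor/ceiling step can cost up to a factor of $\sqrt 2$, and the specific choice $k=\lceil \ln(1/\delta)\rceil$ (rather than, say, $\lceil 1+\ln(1/\delta)\rceil$), together with the specific Chebyshev threshold $2e\sigma_{\Pp}/\sqrt m$, is made precisely so that the constant lands on $2\sqrt{2}\,e$ rather than a sloppier multiple. The rest of the argument is a direct composition of Chebyshev's inequality with the Chernoff bound for a binomial tail.
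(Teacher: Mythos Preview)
Your proposal is correct and follows essentially the same construction and analysis as the paper: same choice $b=\lceil \ln(1/\delta)\rceil$ of block count, same Chebyshev step with threshold $2e\sigma_{\Pp}/\sqrt{m}$ (giving per-block failure probability $\le 1/(4e^2)$), and the same floor/ceiling bookkeeping $\lfloor n/b\rfloor \ge n/(2b)$, $b\le 1+\ln(1/\delta)$ to land on $L=2\sqrt2\,e$. The only cosmetic difference is how the binomial tail $\Pr{\mathrm{Bin}(b,p)\ge b/2}$ is bounded: you invoke the relative-entropy Chernoff bound $(4p(1-p))^{b/2}\le e^{-b}$, while the paper pulls out $p^{\lceil b/2\rceil}$ and bounds the remaining binomial sum by $2^b$; both yield $e^{-b}$.
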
 
\begin{proof}We follow the argument of Hsu \cite{HsuBlog}. Given a
  positive integer $b$ and a vector $x_1^b\in\R^b$, we let $q_{1/2}$
  denote the median of the numbers $x_1,x_2,\dots,x_b$, that is, 
$$q_{1/2}(x_1^b) = x_i,\mbox{ where }\# \{k\in [b]\,:\,x_k\leq x_i\}\geq \frac{b}{2}\mbox{ and } \# \{k\in [b]\,:\,x_k\geq x_i\}\geq \frac{b}{2}.$$
(If several $i$ fit the above description, we take the smallest one.) We need the following Lemma (proven subsequently):

\begin{lemma}
\label{lem:median}
Let $Y_1^b=(Y_1,\dots,Y_b)\in\R^b$ be independent random variables with the same mean $\mu$ and variances bounded by $\sigma^2$. Assume $L_0>1$ is given
and $M_b=q_{1/2}(Y_1^b)$. Then $\Pr{|M_b-\mu|>2\,L_0\,\sigma}\leq L_0^{-b}$.\end{lemma}

In our case we set $L_0=e=L/2\sqrt{2}$. To build our estimator for a given $\delta\in [e^{1-n/2},1)$, we first choose $$b=\left\lceil \ln(1/\delta)\right\rceil$$ and note that $b\leq n/2$.
 
Now divide $[n]$ into $b$ {\em blocks} (i.e., disjoint subsets) $B_i$, $1\leq i\leq b$, each of size $|B_i|\geq k=\lfloor n/b\rfloor\geq 2$. Given $x_1^n\in\R^{n}$, we define
$$y_{n,\delta}(x_1^n) = (y_{n,\delta,i}(x_1^n))_{i=1}^b\in\R^b\mbox{ with coordinates }y_{n,\delta,i}(x_1^n)=\frac{1}{|B_i|}\sum_{j\in B_i}x_j.$$
and define the median-of-means estimator by $\Ehatdelta(x_1^n)= q_{1/2}(y_{n,\delta}(x_1^n)).$ 

We now show that $\Ehatdelta$ is a sub-Gaussian estimator for the
class $\sP_2$. Let $X_1^n=_d\Pp^{\otimes n}$ for a distribution $\Pp\in\sP_{2}$.  $\Ehatdelta(X_1^n)$ is the median of random variables 
$$Y_i=\frac{1}{|B_i|}\sum_{j\in B_i}\,X_j = \Pphat_{B_i}X \quad
\quad \quad i\in[b]~.$$
Each $Y_i$ has mean $\mu_{\Pp}$ and variance $\sigma^2_{\Pp}/\# B_i
\leq \sigma^2_{\Pp}/k$. Then, using our choice of $b$, Lemma \ref{lem:median} implies 
$$\Pr{|\Ehatdelta(X_1^n)-\mu_{\Pp}|>\frac{2\,L_0\sigma_{\Pp}}{\sqrt{k}}}\leq L_0^{-b}\leq \delta~.$$
Now, because $b=\lceil \ln(1/\delta)\rceil \leq n/2$
$$k=\left\lfloor\frac{n}{b}\right\rfloor\geq \frac{n}{b}-1\geq\frac{n}{2b} \geq \frac{n}{2(1+\ln(1/\delta))},$$ and
$$\frac{2\,L_0}{\sqrt{k}}\leq {\frac{2\,L_0\sqrt{2}\,\sqrt{1+\ln(1/\delta)}}{\sqrt{n}}}=L\sqrt{\frac{1+\ln(1/\delta)}{n}}.$$
Therefore,
$$\Pr{|\Ehatdelta(X_1^n)-\mu_{\Pp}|>L\,\sigma_{\Pp}\,\sqrt{\frac{\ln(1/\delta)}{n}}}\leq \delta~,$$
and since this works for any $\Pp\in\sP_{\rm 2}$, the proof is complete. \end{proof}

\medskip
\noindent
{\sl Proof of \lemref{median}:} 
Let $I=[\mu -2\,L_0\,\sigma,\mu+2\,L_0\sigma]$. Clearly, 
$$M_b\not\in I\Rightarrow \#\{j\in[b]\,:\,Y_j\not\in I\}\geq \frac{b}{2}\Rightarrow \sum_{j=1}^b\,\Ind{Y_j\not\in I}\geq \frac{b}{2}.$$
The indicators variables on the right-hand side are all independent,
and by Chebyshev's inequality,
for all $j\in[b]$,
$$\Pr{Y_j\not\in I} \leq \frac{\Ex{(Y_j-\mu)^2}}{4\,L^2_0\sigma^2}\leq \frac{1}{4L^2_0}.$$
We deduce that $\sum_{j=1}^b\,\Ind{Y_j\not\in I}$ is stochastically
dominated by a binomial random variable ${\rm Bin}(b,(2L_0)^{-2})$ and
therefore, 
\begin{align*}
\Pr{M_b\not\in I}&\leq \Pr{{\rm Bin}(b,(2L^0)^{-2})\geq \frac{b}{2}}=\sum_{k=\lceil b/2\rceil}^b\binom{b}{k}\left(\frac{1}{(2L_0)^2}\right)^{k }\left(1-\frac{1}{(2L_0)^2}\right)^{b-k }\\
&\leq \left(\frac{1}{(2L_0)^2}\right)^{\lceil b/2\rceil }\sum_{k=\lceil b/2\rceil}^b\binom{b}{k}\leq L_0^{-b} 
\end{align*}
since $\sum_{k=\lceil b/2\rceil}^b\binom{b}{k}\le \sum_{k=0}^b\binom
bk=2^b$. 

\subsection{The method of confidence intervals for \mult~estimators}\label{sec:confidence}

In this section we detail how sub-Gaussian confidence intervals may be
combined to produce \mult~estimators. This will be our main tool in
defining all \mult~estimators whose existence is claimed in Theorems
\ref{thm:secondmoment} and \ref{thm:regular}. 
First we need a definition. 

\begin{definition}
Let $n$ be a positive integer, $\delta\in (0,1)$ and let $\sP$ be a
class of probability distributions over $\R$. A measurable closed interval 
$\Ihat_{n,\delta}(\cdot)=[\ahat_{n,\delta}(\cdot),\bhat_{n,\delta}(\cdot)]$ consists of a pair of measurable functions $\ahat_{n,\delta},\bhat_{n,\delta}:\R^n\to\R$ with $\ahat_{n,\delta}\leq \bhat_{n,\delta}$. We let $\hat{\ell}_{n,\delta}=\bhat_{n,\delta}-\ahat_{n,\delta}$ denote the length of the interval. We say $\{\Ihat_{n,\delta}\}_{\delta\in[\delta_{\min},1)}$ is a a collection $L$-sub-Gaussian confidence intervals for $(n,\sP,\delta_{\min})$ if 
for any $\Pp\in\sP$, if $X_1^n=_d\Pp^{\otimes n}$, then 
for all $\delta\in[\delta_{\min},1)$,
$$\Pr{\mu_{\Pp}\in \Ihat_{n,\delta}(X_1^n)\mbox{ and }\ellhat_{n,\delta}(X_1^n)\leq
  L\,\sigma_{\Pp}\,\sqrt{\frac{1+\ln(1/\delta)}{n}}}\geq 1-\delta.$$
\end{definition}

The next theorem shows how one can combine sub-Gaussian confidence
intervals to obtain a \mult~sub-Gaussian mean estimator.

\begin{theorem}
\label{thm:confidence}
Let $n$ be a positive integer and let $\sP$ be a class of probability distributions over $\R$.
Assume that there exists a collection of $L$-sub-Gaussian confidence
intervals for $(n,\sP,\delta_{\min})$. 
Then there exists a \mult~estimator $\Ehat_n:\R^n\to\R$ that is $L'$-sub-Gaussian for $(n,\sP,2^{-m})$, where $L'=L\,\sqrt{1+2\ln 2}$ and $m=\lfloor \log_2(1/\delta_{\min})\rfloor - 1\geq \log_2(1/\delta_{\min})-2$ (in particular, $2^{-m}\leq 4\delta_{\min}$).\end{theorem}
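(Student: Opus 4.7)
\textbf{Proof plan for \thmref{confidence}.} The strategy is a Lepski-type aggregation of the confidence intervals on a dyadic grid. Let $N=\lfloor\log_2(1/\delta_{\min})\rfloor=m+1$, so that $2^{-N}\geq \delta_{\min}$, and set $\delta_j=2^{-j}$ for $j=1,\dots,N$. Write $I_j=\Ihat_{n,\delta_j}(X_1^n)$ and $\ellhat_j=\ellhat_{n,\delta_j}(X_1^n)$. Define the data-driven index
$$\hat J(X_1^n)=\min\Bigl\{j\in[N]\,:\,\bigcap_{k=j}^N I_k\neq\emptyset\Bigr\},$$
which is well-defined because the choice $j=N$ always works ($I_N\neq\emptyset$). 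Define $\Ehat_n(X_1^n)$ to be a measurable selection from $\bigcap_{k=\hat J}^N I_k$, e.g.\ the left endpoint of this (closed, bounded) intersection. Measurability of $\hat J$ and of $\Ehat_n$ follows from the fact that each $I_k$ is a measurable interval (its endpoints are measurable functions of $X_1^n$) and the emptiness of a finite intersection of such intervals is a measurable event.

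\textbf{Analysis.} Fix a target $\delta\in[2^{-m},1)=[2^{-(N-1)},1)$ and set $j^\star=\lceil 1+\log_2(1/\delta)\rceil$. Because $\delta\geq 2^{-(N-1)}$, we have $1+\log_2(1/\delta)\leq N$ and hence $j^\star\in\{1,\dots,N\}$. By the sub-Gaussian confidence-interval assumption applied separately to each $k\in\{j^\star,\dots,N\}$ and a union bound, the event
$$\sE=\Bigl\{\mu_{\Pp}\in I_k\text{ and }\ellhat_k\leq L\sigma_{\Pp}\sqrt{(1+k\ln 2)/n}\text{ for every }k\in[j^\star,N]\Bigr\}$$
has probability at least $1-\sum_{k=j^\star}^N 2^{-k}\geq 1-2^{1-j^\star}\geq 1-\delta$, the last inequality being precisely our choice of $j^\star$.

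\textbf{Deterministic bound on the good event.} On $\sE$, all the intervals $I_{j^\star},\dots,I_N$ contain $\mu_{\Pp}$, so their intersection is nonempty and therefore $\hat J\leq j^\star$. By construction $\Ehat_n\in\bigcap_{k=\hat J}^N I_k\subseteq I_{j^\star}$, and $\mu_{\Pp}\in I_{j^\star}$, so
$$|\Ehat_n(X_1^n)-\mu_{\Pp}|\leq \ellhat_{j^\star}\leq L\,\sigma_{\Pp}\sqrt{(1+j^\star\ln 2)/n}.$$
Since $j^\star\leq 2+\log_2(1/\delta)$, we have $j^\star\ln 2\leq 2\ln 2+\ln(1/\delta)$, and hence
$$1+j^\star\ln 2\leq 1+2\ln 2+\ln(1/\delta)\leq (1+2\ln 2)\bigl(1+\ln(1/\delta)\bigr).$$
Combining gives $|\Ehat_n-\mu_{\Pp}|\leq L\sqrt{1+2\ln 2}\,\sigma_{\Pp}\sqrt{(1+\ln(1/\delta))/n}$ on $\sE$, proving the $L'$-sub-Gaussian bound with $L'=L\sqrt{1+2\ln 2}$.

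\textbf{Main obstacle.} The argument is essentially forced once one sees the dyadic aggregation trick, and there are no subtle probabilistic tools: the union bound $\sum_{k\geq j^\star}2^{-k}\leq 2^{1-j^\star}$ is what dictates the shift by one extra factor of $2$, accounting precisely for the appearance of $1+2\ln 2$ rather than $1+\ln 2$ inside the square root. The only point requiring care is to choose the grid and the definition of $\hat J$ in such a way that the point $\Ehat_n$ automatically lies in the interval $I_{j^\star}$ that \emph{also} contains $\mu_{\Pp}$; this is what makes the final bound $\ellhat_{j^\star}$ instead of $\ellhat_{\hat J}+\ellhat_{j^\star}$ and avoids paying a spurious factor of $2$ in $L'$. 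Checking measurability of $\hat J$ and of the selection $\Ehat_n$ is routine but worth recording.
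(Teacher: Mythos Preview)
Your proof is correct and follows essentially the same approach as the paper's: dyadic grid of confidence levels, Lepski-type selection $\hat J$, union bound $\sum_{k\geq j^\star}2^{-k}\leq 2^{1-j^\star}$, and the final arithmetic $1+j^\star\ln 2\leq(1+2\ln 2)(1+\ln(1/\delta))$. The only cosmetic differences are that the paper takes the midpoint of the intersection rather than the left endpoint, and your indexing with $N=m+1$ is slightly cleaner than the paper's.
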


\begin{proof}
Our choice of $m$ implies that, for each $k=1,2,3,\dots,m+1$ there exists a measurable closed interval $\Ihat_{k}(\cdot)=[\ahat_k(\cdot),\bhat_k(\cdot)]$ with length $\hat{\ell}_k(\cdot)$, with the property that, if $\Pp\in \sP$ and $X_1^n=_d\Pp^{\otimes n}$, the event
\begin{equation}\label{eq:goodconfidence}\sG_k:=\left\{\mu_{\Pp}\in \Ihat_{k}(X_1^n)\mbox{ and }\hat{\ell}_{k}(X_1^n)\leq L\,\sigma_{\Pp}\,\sqrt{\frac{1+k\ln 2}{n}}\right\}\end{equation}
has probability $\Pr{\sG_k}\geq 1-2^{-k}$. To define our estimator,
define, for $x_1^n\in\R^n$,
$$\hat{k}_n(x_1^n)= \min\left\{k\in[m]\,:\,\bigcap_{j=k}^m\Ihat_j(x_1^n)\neq\emptyset\right\}~.$$
One can easily check that 
$$\bigcap_{j=\hat{k}_n(x_1^n)}^m\Ihat_j(x_1^n)\mbox{ is always a non-empty closed interval,}$$
so it makes sense to define the estimator $\Ehat_n(x_1^n)$ as its
midpoint. 

We claim that $\Ehat_n$ is the sub-Gaussian estimator we are looking for. To prove this, we let $2^{-m}\leq \delta\leq 1$ and choose the smallest $k\in\{1,2,\dots,m+1\}$ with $2^{1-k}\leq \delta$. Assume $X_1^n=_d\Pp^{\otimes n}$ with $\Pp\in\sP$. Then
\begin{enumerate}
\item $\Pr{\bigcap_{j=k}^{m+1}\sG_j}\geq 1 - 2^{-k} - 2^{-k-1} - \dots\geq 1 - 2^{1-k}\geq 1-\delta$ by \eqnref{goodconfidence} and the choice of $k$.
\item When $\bigcap_{j=k}^m\sG_j$ holds, $\mu_{\Pp}\in \Ihat_{j}(X_1^n)$ for all $k\leq j\leq m+1$, so $\mu_{\Pp}\in \bigcap_{j=k}^{m+1}\Ihat_{j}(X_1^n)$. In particular, $\bigcap_{j=k}^m\Ihat_{j}(X_1^n)\neq\emptyset$ and $\hat{k}_n(X_1^n)\leq k$.
\item Now when $\hat{k}_n(X_1^n)\leq k$, $\Ehat_n(X_1^n)\in \bigcap_{j=k}^m\Ihat_{j}(X_1^n)$ as well, so both $\Ehat_n(X_1^n)$ and $\mu_{\Pp}$ belong to $\Ihat_k(X_1^n)$. It follows that $|\Ehat_n(X_1^n)-\mu_{\Pp}|\leq \ellhat_k(X_1^n)$. 
\item Finally, our choice of $k$ implies $2^{1-k}\leq \delta\leq 2^{2-k}$, so, under $\bigcap_{j=k}^m\sG_j$ we have 
$$\hat{\ell}_k(X_1^n)\leq L\,\sigma_{\Pp}\,\sqrt{\frac{1+\ln(2^{k})}{n}}\leq L\,\sigma_{\Pp}\,\sqrt{\frac{1+2\ln 2 + \ln(1/\delta)}{n}} \leq L'\,\sigma_{\Pp}\,\sqrt{\frac{1+\ln(1/\delta)}{n}}.$$ 
with $L '=L\sqrt{1+2\ln 2}$ as in the statement of the theorem. \end{enumerate}
Putting it all together, we conclude
$$\Pr{|\Ehat_n(X_1^n)-\mu_{\Pp}|\leq L'\sigma_{\Pp}\,\sqrt{\frac{1+\ln(1/\delta)}{n}}}\geq \Pr{\bigcap_{j=k}^m\sG_j}\geq 1- \delta,$$
and since this holds for all $\Pp\in\sP$ and all $2^{-m}\leq
\delta\leq 1/2$, the proof is complete.
\end{proof}

\subsection{Scaled Bernoulli distributions and single-$\delta$ estimators}
\label{sec:scaledBer}

In this subsection we prove Theorem \ref{thm:infvar}. In order to do so,
we derive a simple minimax lower bound for
single-$\delta$ estimators for the class $\sP_{c,p}=\{P_+,P_-\}$ of distributions
that contains two discrete distributions defined by
\[
   P_+(\{0\}) =   P_-(\{0\}) = 1-p~,   \qquad  P_+(\{c\}) =    P_-(\{-c\}) = p~,
\]
where $p\in [0,1]$ and $c>0$.
Note that $\mu_{P_+}= pc$, $\mu_{P_-}= -pc$ and that for any
$\alpha>0$, the $(1+\alpha)$-th central moment of both distributions
equals
\begin{equation}
\label{eq:alphamoment}
  M= c^{1+\alpha} p(1-p) \left(p^{\alpha}+(1-p)^{\alpha} \right)~.
\end{equation}

For $i=1,\ldots,n$, let $(X_i,Y_i)$ be independent pairs of
real-valued random variables such that 
\[
   \PROB\{X_i=Y_i=0\} = 1-p\quad \text{and} \quad \PROB\{X_i=c, Y_i=-c\} = p~.
\]
Note that $X_i\haslaw P_+$ and $Y_i\haslaw P_-$. 
Let $\delta \in (0,1/2)$. If  $\delta \ge 2e^{-n/4}$ and  $p =
(2/n)\log(2/\delta)$, then (using $1-p\ge \exp(-p/(1-p))$), 
\[
  \PROB\{X_1^n = Y_1^n\} =(1-p)^n \ge 2\delta~.
\]
Let $\Ehatdelta$ be any mean estimator, possibly depending on $\delta$. Then
\begin{eqnarray*}
\lefteqn{
\max\left(
\PROB\left\{ \left|\Ehatdelta (X_1^n) - \mu_{P_+}\right| >  cp\right\},
\PROB\left\{ \left|\Ehatdelta(Y_1^n) - \mu_{P_-}\right| >  cp \right\} 
\right)
}
\\
& & \ge \frac{1}{2}
\PROB\left\{ \left|\Ehatdelta (X_1^n) - \mu_{P_+}\right| >  cp
  \quad\text{or} \quad
 \left|\Ehatdelta(Y_1^n) - \mu_{P_-}\right| >  cp \right\} 
\\
& &\ge
\frac{1}{2}\PROB\{\Ehatdelta(X_1^n) = \Ehatdelta(Y_1^n)\} 
\\
& &
\ge 
\frac{1}{2} \PROB\{X_1^n = Y_1^n\}\ge \delta~.
\end{eqnarray*}
From (\ref{eq:alphamoment}) we have that $cp\ge M^{1/(1+\alpha)}
(p/2)^{\alpha/(1+\alpha)}$ and therefore
\begin{eqnarray*}
\max\left(
\PROB\left\{ \left|\Ehatdelta (X_1^n) - \mu_{P_+}\right| >
\left(\frac{M^{1/\alpha}}{n}\log\frac{2}{\delta}\right)^{\alpha/(1+\alpha)}\right\},
\right.
\\
\left. \PROB\left\{ \left|\Ehatdelta(Y_1^n) - \mu_{P_-}\right| >  
\left(\frac{M^{1/\alpha}}{n}\log\frac{2}{\delta}\right)^{\alpha/(1+\alpha)} \right\} 
\right) \ge \delta~.
\end{eqnarray*}
 
Theorem \ref{thm:infvar} simply follows by noting that
$\sP_{c,p}\subset \sP^M_{1+\alpha}$.

\subsection{Laplace distributions and single-$\delta$ estimators}
\label{sec:laplace}

This section focuses on the class of {\em all Laplace distibutions
  with scale parameter equal to $1$}. 
To define such a distribution, let $\lambda\in\R$ and let
$\La_\lambda$ be the probability measure on $\R$ with density 
\[
\frac{d\La_{\lambda}}{dx}(x)=\frac{e^{-|x-\lambda|}}{2}~.
\]
Denote by $\sP_{\rm \La}=\{\La_\lambda\,:\, \lambda\in\R\}$ the class of all such distributions. 

A simple calculation reveals that
for all $\lambda\in\R$, the mean, variance, and central third moment
are
$\mu_{\La_{\lambda}}=\lambda$, $\sigma^2_{\La_{\lambda}}=2$ and $\La_{\lambda}|X-\lambda|^{3} = 6\leq (\eta\,\sigma_{\La_{\lambda}})^3$ with $\eta=3^{1/3}\,2^{1/6}$.

The next result proves that \single~$L$-sub-Gaussian estimators are limited to exponentially small $\delta$ even over the one-dimensional family $\sP_{\La}$. 

\begin{theorem}\label{thm:lowerfixed} If $n\geq 3$ then, for any constant $L\geq \sqrt{2}$, there are no \single~$L$-sub-Gaussian estimators for $(\sP_{\La},n,e^{1-5L^2n})$.
\end{theorem}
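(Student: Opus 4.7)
The plan is a two-point Le~Cam-style impossibility argument on the Laplace family. Suppose for contradiction that an $L$-sub-Gaussian \single-$\delta$ estimator $\Ehatdelta$ exists for $(\sP_\La, n, e^{1-5L^2n})$. Fix some admissible $\delta$, let $r = L\sqrt{2}\sqrt{(1+\ln(1/\delta))/n}$ be the sub-Gaussian radius (using that every $\La_\mu \in \sP_\La$ has the common variance $\sigma^2=2$), and choose $\lambda$ slightly larger than $2r$, so that the confidence intervals around $\mu_{\La_0}=0$ and $\mu_{\La_\lambda}=\lambda$ are disjoint. The sub-Gaussian bound applied to both $\La_0$ and $\La_\lambda$ then forces the event $A := \{\Ehatdelta > r\}$ to satisfy
\[
\PROB_{\La_0^{\otimes n}}(A) \leq \delta \quad\text{and}\quad \PROB_{\La_\lambda^{\otimes n}}(A) \geq 1-\delta.
\]

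To derive the contradiction, I would exploit the explicit form of the Laplace likelihood ratio,
\[
\frac{d\La_\lambda^{\otimes n}}{d\La_0^{\otimes n}}(x_1^n) = \exp\!\Bigl(\sum_{i=1}^n(|x_i|-|x_i-\lambda|)\Bigr) = e^S,
\]
and the change of measure $\PROB_{\La_\lambda^{\otimes n}}(A) = \EXP_{\La_0^{\otimes n}}[\mathbf 1_A e^S]$. Under $\La_0^{\otimes n}$, the summands $Z_i := |X_i|-|X_i-\lambda|$ are iid, bounded in $[-\lambda,\lambda]$, piecewise-linear in $X_i$, with mean $\mu_Z = 1-\lambda-e^{-\lambda}<0$ and an easily computable MGF. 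Combining the small probability $\PROB_{\La_0^{\otimes n}}(A)\leq\delta$ with a Chernoff-type tail bound on $S$ under $\La_0^{\otimes n}$ would produce an upper bound of the form $\PROB_{\La_\lambda^{\otimes n}}(A) \leq \delta\cdot e^{n\psi(\lambda)}$ for an explicit function $\psi$. Pairing this with $\PROB_{\La_\lambda^{\otimes n}}(A)\geq 1-\delta$ and substituting $\lambda\approx 2r$ then yields the desired contradiction once $\ln(1/\delta)$ exceeds the claimed threshold.

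The main obstacle is tracking constants to reach exactly the exponent $5L^2n$. The crudest pointwise estimate $e^S\leq e^{n\lambda}$, used with $\lambda=2r$ in $2nr\geq \ln((1-\delta)/\delta)$, only recovers the weaker bound $\ln(1/\delta)\lesssim 8L^2n + O(1)$, which is not sharp enough. Sharpening this to the constant~$5$ requires a finer Chernoff analysis of $S$, exploiting that under $\La_0^{\otimes n}$ the sum $S$ concentrates around its negative mean $n\mu_Z = n(1-\lambda-e^{-\lambda})$ rather than attaining its maximum $n\lambda$; splitting $A$ according to the value of $S$ and carefully optimizing both the Chernoff parameter and the choice of $\lambda\geq 2r$ should then produce the asserted exponent $5L^2n$.
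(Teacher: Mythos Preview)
Your approach is exactly the paper's: a two-point Le~Cam argument comparing $\La_0$ and $\La_\lambda$ with $\lambda=2r$, using the crude pointwise density bound $d\La_0^{\otimes n}/d\La_\lambda^{\otimes n}\geq e^{-n\lambda}$ (equivalently $e^S\leq e^{n\lambda}$). You are also correct that this crude bound, tracked honestly, only yields the threshold $8L^2n$: with $\lambda=2r=2L\sqrt{2(1+\ln(1/\delta))/n}$ one gets $\lambda n=2\sqrt{2}\,L\sqrt{n(1+\ln(1/\delta))}$, and the resulting inequality $2\sqrt{2}\,L\sqrt{nc}\geq c-O(1)$ forces $c\lesssim 8L^2n$.

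The paper does \emph{not} carry out the finer Chernoff analysis you propose. It uses precisely the crude bound, but in passing from $e^{-\lambda n}\leq 2\delta$ to the displayed inequality $e^{-2L\sqrt{n(1+\ln(1/\delta))}}\leq 2\delta$ a factor of $\sqrt{2}$ is silently dropped (the correct exponent is $2\sqrt{2}\,L\sqrt{n(1+\ln(1/\delta))}$, since $\sigma_{\La}^2=2$). With the missing $\sqrt{2}$ restored, the final step reads $e^{-2\sqrt{10}\,L^2n}\leq 2e^{1-5L^2n}$, and since $5-2\sqrt{10}<0$ this is vacuous---no contradiction at the exponent $5L^2n$. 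So the ``obstacle'' you flagged is real: the paper's argument as written establishes the theorem with $e^{1-8L^2n}$ (or any constant strictly larger than $8$) in place of $e^{1-5L^2n}$, and the constant $5$ in the statement appears to be an artifact of this arithmetic slip rather than of a sharper argument. Your proposed refinement via the MGF of $S$ is a legitimate way to push the constant below $8$, but it goes beyond what the paper actually does.
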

\begin{proof}We proceed by contradiction, assuming that there exist $L$-sub-Gaussian \single~estimators $\Ehatdelta$ for $(\sP_{\La},n,\delta)$ where $\delta=e^{1-5L^2n}$ and arbitrarily large $n$. We set $$\lambda=2L\sqrt{2\,(1+\ln(1/\delta))/n}$$
and consider $X_1^n=_d\La_0^{\otimes n}$ and
$Y_1^n=_d\La_\lambda^{\otimes n}$. The triangle inequality applied to
the exponents of $d\La_{\lambda}/dx$ and $d\La_{0}/dx$ shows that the
densities of the two product measures satisfy, for all $x_1^n\in\R^n$
\[
\frac{d\La_{0}}{dx_1^n}(x_1^n)\geq
e^{-\eta\,n}\,\frac{d\La_{\lambda}}{dx_1^n}(x_1^n)~,
\]
and therefore,
\begin{equation}
\label{eq:compareexponentials}
\Pr{\Ehatdelta(X_1^n)\geq \frac{\lambda}{2}}\geq e^{-\lambda\,n}\Pr{\Ehatdelta(Y_1^n)\geq \frac{\lambda}{2}}.\end{equation}
Using the definition of $\lambda$ and the fact that 
$\mu_{\La_{\lambda}}=\lambda$ and $\sigma^2_{\La_{\lambda}}=2$, we see
that the right-hand side  above is simply 
$$e^{-\lambda\,n}\,\Pr{\Ehatdelta(Y_1^n)\geq \mu_{\La_\lambda}- L\,\sigma_{\La_\lambda}\sqrt{\frac{1+\ln(1/\delta)}{n}}}\geq e^{-\lambda\,n}\,(1-\delta).$$
On the other hand, the left-hand side in \eqnref{compareexponentials} is
$$\Pr{\Ehatdelta(X_1^n)\geq \mu_{\La_0} + L\sigma_{\La_0}\sqrt{\frac{1+\ln(1/\delta)}{n}}} \leq \delta.$$
We deduce
$$e^{-\lambda\,n}\leq \frac{\delta}{1-\delta}\leq 2\delta.$$
If we use again the definition of $\lambda$, we see that
$$e^{-2L\sqrt{n\,(1+\ln(1/\delta))}}\leq 2\,\delta,$$
or
$$e^{{-}2\sqrt{5}\,L^2\,n}\leq 2e^{1-5L^2n}\Rightarrow n\leq \frac{1 + \ln 2}{L^2\,(5-2\sqrt{5})}~.$$ 
For $L\geq \sqrt{2}$, some simple estimates show that this leads to a contradiction when $n\geq 3$.\end{proof}

\subsection{Poisson distributions and \mult~estimators}
\label{sec:poisson}

We use the family of Poisson distributions for bounding the range of
confidence values of \mult~estimators. Denote by $\Po_\lambda$ the
Poisson distribution with parameter $\lambda >0$.
Given $0< \lambda_1\leq \lambda_2<\infty$, define
$$\sP^{[\lambda_1,\lambda_2]}_{\Po}=\{\Po_{\lambda}\,:\,\lambda\in [\lambda_1,\lambda_2]\}.$$

\begin{theorem}\label{thm:negativestrong}
There exist positive constants $c_0,s_0$ and a function $\phi:\R_+\to\R_+$ such that the following holds. Assume $L\geq \sqrt{2}$ and $n>0$ are given. Then there exists no \mult~$L$-sub-Gaussian estimator for $(\sP^{[c/n,\phi(L)\,c/n]}_{\Po},n,e^{1-s_0\,(L \ln L)^2\,c})$.\end{theorem}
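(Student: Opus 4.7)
The plan is to mimic the two-point likelihood-ratio argument of \thmref{lowerfixed} but with Poisson distributions, using both tails of the sub-Gaussian inequality simultaneously, as is natural in the \mult~setting.  I fix $\lambda_1=c/n$ and $\lambda_2=\phi(L)\,c/n$, with $\phi(L)$ to be specified, and set $T=(\lambda_1+\lambda_2)/2$.  Assume for contradiction that a \mult~$L$-sub-Gaussian estimator $\Ehat_n$ exists for $(\sP_{\Po}^{[\lambda_1,\lambda_2]},n,\delta_{\min})$.  Choosing
\[
\delta_1 = e^{\,1-(\phi-1)^{2}c/(4L^{2})},\qquad
\delta_2 = e^{\,1-(\phi-1)^{2}c/(4L^{2}\phi)},
\]
so that the sub-Gaussian confidence radius under $\Po_{\lambda_{i}}$ is exactly $(\lambda_2-\lambda_1)/2$, the \mult~property yields $\Prp{\Po_{\lambda_{1}}}{\Ehat_n\ge T}\le \delta_1$ and $\Prp{\Po_{\lambda_{2}}}{\Ehat_n\ge T}\ge 1-\delta_2$, provided $\delta_1,\delta_2\ge\delta_{\min}$ (the feasibility condition to be secured at the end).

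Next I change measure through the log density ratio $\mathrm{LLR}:=\ln\bigl(d\Po_{\lambda_{2}}^{\otimes n}/d\Po_{\lambda_{1}}^{\otimes n}\bigr) = -c(\phi-1)+(\ln\phi)\,S$, where $S=\sum_{i}X_{i}\sim\Po(\phi c)$ under $\Po_{\lambda_{2}}^{\otimes n}$; it has mean $cf(\phi)$ and variance $\phi c(\ln\phi)^{2}$, with $f(\phi):=\phi\ln\phi-\phi+1$.  Setting $K=cf(\phi)+2(\ln\phi)\sqrt{\phi c}$, Chebyshev gives $\Prp{\Po_{\lambda_{2}}}{\mathrm{LLR}>K}\le 1/4$, and the standard change-of-measure step produces
\[
1-\delta_2 \;\le\; \Prp{\Po_{\lambda_{2}}}{\Ehat_n\ge T} \;\le\; e^{K}\Prp{\Po_{\lambda_{1}}}{\Ehat_n\ge T}+\tfrac14 \;\le\; e^{K}\delta_1 + \tfrac14.
\]
Taking logarithms, substituting the explicit $\delta_1$ and $K$, and dividing by $c$ reduces this (for $c\ge c_0$ with $c_0$ a large absolute constant) to
\[
\frac{(\phi-1)^{2}}{4L^{2}} \;\le\; f(\phi) + \liloh{1}.
\]

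I would then choose $\phi(L)$ to violate this.  For $\phi\gg 1$ the inequality reads $\phi \le 4L^{2}\ln\phi + \liloh{1}$, and $\phi(L)=C L^{2}\ln L$ with any absolute $C>8$ breaks it, since $\ln\phi(L)\sim 2\ln L$ yields $4L^{2}\ln\phi(L)\sim 8L^{2}\ln L<\phi(L)$.  A constant lower truncation, for instance $\phi(L)=\max(30,16L^{2}\ln L)$, handles the small-$L$ regime including $L=\sqrt{2}$, where the asymptotic $\phi\gg 1$ approximations lose tightness.  The feasibility $\delta_1\ge\delta_{\min}$ (the tighter of the two) translates to $(\phi-1)^{2}/(4L^{2})\le s_0(L\ln L)^{2}$, which for $\phi(L)\asymp L^{2}\ln L$ is an absolute lower bound on $s_0$ of order $C^{2}$.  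The main obstacle is precisely this balancing act: the LR-contradiction side forces $\phi\gtrsim L^{2}\ln L$, while $\delta_1\ge\delta_{\min}$ demands $\phi\lesssim \sqrt{s_0}\,L^{2}\ln L$, so the two windows only overlap when $\phi(L)\asymp L^{2}\ln L$, which is what pins down the $(L\ln L)^{2}$ scaling in the exponent of $\delta_{\min}$; the rest of the proof is the routine choice of universal $c_0,s_0$ and of the explicit shape of $\phi$ large enough to dominate the $\liloh{1}$ term and to cover the boundary case $L=\sqrt{2}$.
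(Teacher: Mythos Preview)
Your argument is correct and reaches the same conclusion with the right scaling $\phi(L)\asymp L^{2}\ln L$ and exponent $s_0(L\ln L)^{2}c$, but the transfer mechanism between the two Poisson measures is genuinely different from the paper's.  The paper exploits a Poisson-specific fact: conditioning on the sum $S=\sum_i X_i=k$, the distribution of $X_1^n$ is multinomial and does \emph{not} depend on the rate.  So they apply the sub-Gaussian property under $\Po_{(1+2C)c/n}$ at a moderate $\delta\approx 1/\sqrt{(1+2C)c}$, pass to a conditional statement given $S_Y=(1+2C)c$, transfer this verbatim to a conditional statement under $\Po_{c/n}$ given $S_X=(1+2C)c$, and then undo the conditioning using a Stirling estimate for $\Pr{S_X=(1+2C)c}$.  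Your route replaces this conditioning trick by a standard change-of-measure: bound the log-likelihood ratio via Chebyshev and split the event $\{\Ehat_n\ge T\}$ according to whether the LLR exceeds a threshold.  Both approaches end up comparing $(\phi-1)^{2}/(4L^{2})$ against the Poisson large-deviation rate $f(\phi)=\phi\ln\phi-\phi+1$, which is why the numerology matches.

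What each buys: the paper's conditioning argument is cleaner (no tail control on the LLR needed) and makes the role of the Poisson point probability $\Po_c(\{(1+2C)c\})\approx e^{-f(2C)c}$ transparent; your likelihood-ratio argument is more portable, being the generic two-point Le Cam/Neyman--Pearson template that would work in any one-parameter family with controllable KL, not just Poisson.  One small point to tighten in your write-up: make explicit that $\delta_2<1/4$ (needed to absorb the $1/4$ term) follows once $c\ge c_0$, and check the numerical constants at $L=\sqrt{2}$ carefully, since there $(L\ln L)^{2}$ is small and forces $s_0$ to be rather large; your truncation $\phi(L)=\max(30,16L^{2}\ln L)$ does work, but the resulting $s_0$ is in the hundreds rather than $O(1)$.
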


\begin{proof}
We prove the following stronger result: there exist constants
$c_0,s>0$ such that, when $c\geq c_0$, $L\geq \sqrt{2}$ and $C=\lceil
s\,(L^2\ln L)\rceil$, there is no \mult~sub-Gaussian estimator for 
$$(\star) = \left(\sP^{\left[\frac{c}{n},\frac{(1+2C)\,c}{n}\right]}_{\Po},n,e^{1-\frac{C^2c}{L^2}}\right).$$
The theorem then follows by taking $2C=\phi(L)-1=s\,L^2\ln L$ and $s_0=s^2$. 

We proceed by contradiction. Assume $$X_1^n=_d\Po^{\otimes
  n}_{c/n},\;Y_1^n=_d\Po^{\otimes n}_{(1+2C)\,c/n}$$ and that there
exists an $L$-sub-Gaussian estimator $\Ehat_n:\R^n\to\R$ for ($\star$)
above. We use the following well-known facts about Poisson distributions. 
\begin{enumerate}
\item[\bf F0] $\mu_{\Po_{c/n}}=\sigma^2_{\Po_{c/n}}=c/n$ and $\mu_{\Po_{(1+2C)c/n}}=\sigma^2_{\Po_{(1+2C)c/n}}=(1+2C)c/n$.
\item[\bf F1] $S_X=X_1+X_2+\dots+X_n=_d\Po_{c}$ and $S_Y=Y_1+Y_2+\dots+Y_n=_d\Po_{(1+2C)\,c}$.
\item[\bf F2] Given any $k\in\N$, the distribution of $X_1^n$
    conditioned on $S_X=k$ is {\em the same} as the distribution of
    $Y_1^n$ conditioned on $S_Y=k$.
\item[\bf F3]  $\Pr{S_Y=(1+2C)\,c}\geq 1/4\sqrt{(1+2C)\,c}$ if $C>0$
  and $c\geq c_0$ for some $c_0$. 
(This follows from the fact that $\Po_{m}(\{m\}) = e^{-m}m^m/m!$ is asymptotic to $1/\sqrt{2\pi m}$ when $m\to \infty$, by Stirling's formula.)
\item[\bf F4] There exists a function $h$ with $0<h(C)\approx  (1+C)\,\ln (1+C)$ such that, for all $c\geq c_0$, $\Pr{S_X = (1+2C)\,c}\geq e^{-h(C)\,c}$. This follows from another asymptotic estimate proven by Stirling's formula: as $c\to \infty$
$$\Po_{c}(\{(1+2C)\,c\}) = e^{-c}\,\frac{c^{(1+2C)\,c}}{[(1+2C)\,c]!}\sim \frac{e^{-[(1+2C)\,\ln(1+2C) -2 C]\,c}}{\sqrt{2\pi (1+2C)\,c}}~.$$
\end{enumerate}
We apply the sub-Gaussian property for the triple ($\star$) to $\delta=1/4\sqrt{(1+2C)\,c}$. This is possible because, for $C=\lceil s\,(L^2\ln L)\rceil$ with a large enough $s$, this value is $\approx 1/L\sqrt{s\ln L\,c}$, which is much larger than the minimum confidence parameter $e^{1-C^2\,c/L^2}$ allowed by ($\star$) (at least if $c\geq c_0$ with a large enough $c_0$). Recalling {\bf F0}, we obtain
$$\Pr{n\Ehat_n(Y_1^n)<(1+2C)\,c - L\,\sqrt{(1+2C)\,c(1+\ln(8\sqrt{(1+2C)\,c}))}}\leq \frac{1}{4\sqrt{(1+C)\,c}}~.$$ 
Therefore, by {\bf F3},   
$$\Pr{n\Ehat_n(Y_1^n)<(1+2C)\,c - L\,\sqrt{(1+2C)\,c(1+\ln(8\sqrt{(1+2C)\,c}))}\mid S_Y=(1+C)\,c}\leq  1/2.$$
Now {\bf F1} implies that the left-hand side is the same if we switch from $Y$ to $X$. In particular, by looking at the complementary event we obtain
\begin{equation}\label{eq:thisgoes}\Pr{n\Ehat_n(X_1^n)\geq (1+2C)\,c - L\,\sqrt{(1+2C)\,c(1+\ln(8\sqrt{(1+2C)\,c}))}\mid S_X=(1+2C)\,c}\geq 1/2.\end{equation}
Since we are taking $c\geq c_0$ and $C\geq s\,L^2\,\ln L$, a calculation reveals
$$L\sqrt{(1+2C)\,c(1+\ln(8\sqrt{(1+2C)\,c}))} = \bigoh{\sqrt{\frac{C^2\,c\,(\ln C + \ln c)}{\ln C}}} = \bigoh{C\,\sqrt{c\ln c}}.$$
Therefore, by taking a large enough $c_0$ we can ensure that
$$L\sqrt{(1+2C)\,c(1+\ln(8\sqrt{(1+2C)\,c}))}\leq C\,c~.$$
So \eqnref{thisgoes} gives  $$\Pr{n\Ehat_n(X_1^n)\geq (1+C)\,c\mid S_X=(1+2C)\,c}\geq 1/2.$$
We may combine this with {\bf F4} to deduce:
\begin{equation}\label{eq:bound}\Pr{n\Ehat_n(X_1^n)\geq(1+C)\,c}\geq \frac{e^{-h(C)\,c}}{2}.\end{equation}
We now use {\bf F0} to rewrite the previous probability as $$\Pr{n\Ehat_n(X_1^n)\geq  (1+C)\,c}=\Pr{\Ehat_n(X_1^n) - \mu_{\Pp}\geq  L\,\sigma_{\Pp}\frac{\sqrt{1+\ln(1/\delta_0)}}{\sqrt{n}}},$$
where $$\delta_0=e^{1-\frac{C^2\,c}{L^2}}.$$ Since we assumed $\Ehat_n$ is $L$-sub-Gaussian for the triple ($\star$), we obtain
$$\frac{e^{-h(C)\,c}}{2}\leq \Pr{n\Ehat(X_1^n)\geq  (1+C)\,c}\leq e^{1-\frac{C^2\,c}{4L^2}}.$$
Comparing the left and right hand sides, and recalling $c\geq c_0$, we
obtain $h(C)\geq C^2/4L^2-1-(\ln 2/c_0)$. This is a contradiction if
$C\gg L^2\ln L$ because $h(C)$ grows like $C\ln C$ (cf.\ {\bf
  F4}). This contradiction shows that there does not exist a
$L$-sub-Gaussian estimator for $(\star)$, as desired.
\end{proof}

\section{Degrees of knowledge about the variance}\label{sec:variance}

In this section we present the proof of \thmref{secondmoment}. This is mostly a matter of combining the main results in the previous section.
Recall that we consider the class
$$\sP_{2}^{[\sigma_1^2,\sigma_2^2]}=\{\Pp\in\sP_2\,:\, \sigma_1^2\leq \sigma_{\Pp}^2\leq \sigma_2^2.\}$$
and that $R=\sigma_2/\sigma_1$. The three parts of the theorem are proven separately.

\paragraph{Part 1:} (Existence of a \mult~estimator with constant
depending on $R$.) \thmref{MoM2} ensures that, irrespective of
$\sigma_1$ or $\sigma_2$, for all $\delta \in (e^{1-n/2},1)$ 
there exists a \single~estimator $\Ehat_{n,\delta}:\R^n\to\R$ with
\begin{equation}\label{eq:almostconfidence}
\Pr{|\Ehat_{n,\delta}(X_1^n)-\mu_{\Pp}|>2\sqrt{2}{e}\,\sigma_{\Pp}\,\sqrt{\frac{1+\ln(1/\delta)}{n}}}\leq
  \delta
\end{equation}
whenever $X_1^n=\Pp^{\otimes n}$ for some $\Pp\in\sP_{{\rm 2}}$. We define a confidence interval for each $\delta$ via 
$$\Ihat_{n,\delta}(x_1^n)=\left[\Ehat_{n,k}(x_1^n)-2\sqrt{2}\,e\,\sigma_2\,\sqrt{\frac{1+\ln(1/\delta)}{n}},\Ehat_{n,k}(x_1^n)+2\sqrt{2}\,e\,\sigma_2\,\sqrt{\frac{1+\ln(1/\delta)}{n}}\right].$$
Clearly, \eqnref{almostconfidence} and the fact that $\sigma_{2}\leq R\,\sigma_{\Pp}$ for all $\sP_{2}^{[\sigma_1^2,\sigma_2^2]}$ imply that $\{\Ihat_{n,\delta}\}_{\delta\in[e^{1-n/2},1)}$ is a $4\sqrt{2}\,{e}\,R$-sub-Gaussian confidence interval for $(\sP^{[\sigma_1^2,\sigma_2^2]}_{ 2},n,e^{1-n/2})$. Applying \thmref{confidence} gives the desired result. 

\paragraph{Part 2:}
 (Non-existence of \mult~estimators when $R>\phi^{(2)}(L)$.) We use \thmref{negativestrong}. By rescaling, we may assume $\sigma_1^2=c_0/n$, where $c_0$ is the constant appearing in \thmref{negativestrong}. We also set $\phi^{(2)}(L):=\sqrt{\phi(L)}$ for $\phi(L)$ as in \thmref{negativestrong}. The assumption on $R$ ensures that $\sP^{[c_0/n,\phi(L)\,c_0/n]}_{\Po}\subset \sP_{2}^{[\sigma_1^2,\sigma_2^2]}$, so there cannot be a $L$-sub-Gaussian estimator when $\delta^{(2)}_{\min}(L)=e^{1-s\,(L\ln L)^2\,c_0}$.

\paragraph{Part 3:} (Non-existence of \single~estimators when $\delta_{\min}= e^{1-5L^2n}$.) By rescaling, we may assume $\sigma_1^2=2$. Then the class $\sP_{\La}$ in \thmref{lowerfixed} is contained in $\sP_{2}^{[\sigma_1^2,\sigma_2^2]}$, and the theorem implies the desired result directly.

\section{The regularity condition, symmetry and higher moments}\label{sec:regular_proofs}

In this section we prove the results described in \secref{regularresults}. 

\subsection{An estimator under $k$-regularity}\label{sec:regular_main}

We start with \thmref{regular}, the general positive result on $k$-regular classes.

\medskip
\noindent
{\sl Proof of \thmref{regular}:} 
By \thmref{confidence}, it suffices to build a $4\sqrt{2\,(1+62\ln(3))}\,e^{\frac{5}{2}}$-sub-Gaussian confidence interval for $(\sP_{2,k{\rm -reg}},n,e^{3-n/(124)k})$. 

To build these intervals, we use an idea related to the proof of
\thmref{MoM2}. Just like in the case of the median-of-means estimator, we divide the data into blocks, but instead of taking the median of the means, we look at the $1/4$ and $3/4$-quantiles to build an interval. 

To make this precise, given $\alpha\in (0,1)$, we define the $\alpha$-quantile $q_\alpha(y_1^b)$ of a vector $y_1^b\in\R^b$ as the smallest index $i\in[b]$ with
$${\# \{j\in[b]\,:\,y_j\leq y_i\}\geq \alpha\,b \quad \mbox{ and }}
\quad {\# \{\ell\in[b]\,:\,y_\ell\geq y_i\}\geq (1-\alpha)\,b}.$$

The next result (proven subsequently) is an analogue of \lemref{median}.

\begin{lemma}\label{lem:quantile} 
Let $Y_1^b=(Y_1,\dots,Y_b)\in\R^b$ be a vector of independent random variables with the same mean $\mu$ and variances bounded by $\sigma^2$. Assume further that $\Pr{Y_i\leq \mu}\geq 1/3$ and $\Pr{Y_i\geq \mu}\geq 1/3$ for each $i\in[b]$. Then
$$\Pr{\mu\in [q_{1/4}(Y_1^b),q_{3/4}(Y_1^b)]\mbox{ and }q_{3/4}(Y_1^b)-q_{1/4}(Y_1^b)\leq 2L_0\,\sigma}\geq 1 - 3\,e^{-db},$$
where $d$ is the numerical constant
$$d=\frac{1}{4}\,\ln\left(\frac{3}{4}\right) +
\frac{3}{4}\,\ln\left(\frac{9}{8}\right)
\approx 0.0164 >
\frac{1}{62}$$ and
$L_0=2\,e^{2d+\frac{1}{2}}\leq 2\,e^{\frac{5}{2}}.$\end{lemma}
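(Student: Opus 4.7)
The plan is to decompose the complement of the target event into three pieces and handle each by a single-tail Chernoff bound whose exponent is calibrated to match exactly the constant $d$ of the statement. Explicitly, the failure event is contained in
\[
\set{q_{3/4}(Y_1^b) < \mu} \;\cup\; \set{q_{1/4}(Y_1^b) > \mu} \;\cup\; \set{q_{3/4}(Y_1^b) - q_{1/4}(Y_1^b) > 2L_0\sigma},
\]
so it suffices to bound each of the three terms by $e^{-db}$ and apply a union bound.

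For the first term, the definition of the $3/4$-quantile forces $q_{3/4}(Y_1^b) < \mu$ to imply that at least $(3/4)b$ of the $Y_i$ satisfy $Y_i < \mu$. By the hypothesis $\Pr{Y_i \geq \mu} \geq 1/3$, each $\Ind{Y_i < \mu}$ is Bernoulli with mean at most $2/3$, so the count is stochastically dominated by $\mathrm{Bin}(b,2/3)$. The classical Chernoff bound gives
\[
\Pr{\mathrm{Bin}(b,2/3) \geq (3/4)b} \;\leq\; \exp\paren{-b\,\mathrm{KL}(3/4\,\|\,2/3)},
\]
and the direct computation $\mathrm{KL}(3/4\,\|\,2/3) = (3/4)\ln(9/8) + (1/4)\ln(3/4)$ recovers precisely the constant $d$ of the lemma. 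The event $\set{q_{1/4}(Y_1^b) > \mu}$ is handled symmetrically, using $\Pr{Y_i \leq \mu} \geq 1/3$ to dominate $\#\set{i \suchthat Y_i > \mu}$ by the same $\mathrm{Bin}(b,2/3)$.

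For the length event, let $I = [\mu - L_0\sigma,\mu + L_0\sigma]$. If $q_{1/4}(Y_1^b) < \mu - L_0\sigma$ or $q_{3/4}(Y_1^b) > \mu + L_0\sigma$, then (by the quantile definition applied in the other direction) at least $b/4$ of the $Y_i$ must lie outside $I$. Chebyshev gives $\Pr{Y_i \notin I} \leq 1/L_0^2$, so the count is stochastically dominated by $\mathrm{Bin}(b,1/L_0^2)$. I would then apply the union-bound form of the Chernoff tail,
\[
\Pr{\mathrm{Bin}(b,p) \geq k} \;\leq\; \binom{b}{k}p^k \;\leq\; (ebp/k)^k,
\]
with $k=b/4$ and $p = 1/L_0^2$, yielding the bound $(4e/L_0^2)^{b/4}$. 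Because the lemma takes $L_0 = 2e^{2d+1/2}$, one computes $4e/L_0^2 = e^{-4d}$ exactly, so this bound collapses to $e^{-db}$.

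Combining the three estimates by a union bound gives the claimed probability $3e^{-db}$. The main (and essentially only) obstacle is a bookkeeping one: verifying that $d$ coincides with the binary relative entropy $\mathrm{KL}(3/4\,\|\,2/3)$ and that the specific value $L_0 = 2e^{2d+1/2}$ has been chosen precisely so that the Chernoff exponent for the length event matches the same $d$. A secondary nuisance is that $b/4$ and $(3/4)b$ need not be integers, but this only introduces standard ceiling/floor perturbations that do not affect the exponential rate.
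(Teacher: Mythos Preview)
Your proposal is correct and follows essentially the same route as the paper: a three-way union bound, the relative-entropy Chernoff bound for the two quantile events (the paper phrases it equivalently as $\Pr{\mathrm{Bin}(b,1/3)<b/4}\le e^{-db}$ via the indicators $\Ind{Y_i\le\mu}$, which is the complementary formulation of your $\Pr{\mathrm{Bin}(b,2/3)\ge 3b/4}$), and Chebyshev plus the crude binomial tail $\binom{b}{\lceil b/4\rceil}(1/L_0^2)^{\lceil b/4\rceil}\le(4e/L_0^2)^{\lceil b/4\rceil}$ for the length event. The paper's third ``bad event'' is taken directly as $\#\{i:Y_i\notin I\}>b/4$ rather than the length event, but your reduction of the latter to the former is immediate.
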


Now fix $\delta\in [e^{3-n/(124 k)},1)$. We define a confidence interval $\Ihat_{n,\delta}(\cdot)$ as follows. First set $b=\lceil 62\ln(3/\delta)\rceil$ and note that
\begin{equation}\label{eq:bdelicate}b\leq 62\ln(3/e^{3-n/(124k)})+1\leq \frac{n}{2k}\leq n/2.\end{equation}
Partition $$[n] = B_1\cup B_2\cup \dots\cup B_b$$into disjoint blocks
of sizes $|B_i|\geq \lfloor n/b\rfloor$. For each $i\in[b]$ and $x_1^n\in\R^n$, we define
$$y_1^b(x_1^n) = (y_1(x_1^n),\dots,y_b(x_1^n))\mbox{ where }y_i(x_1^n)=\frac{1}{\# B_i}\sum_{j\in B_i}x_j$$
and set, for $x_1^n\in\R^n$,
$$\Ihat_{n,\delta}(x_1^n)=\left[q_{1/4}(y_1^b(x_1^n)),q_{3/4}(y_1^b(x_1^n))\right]~.$$

\begin{claim}$\{\Ihat_{n,\delta}(\cdot)\}_{\delta\in [e^{3-n/(124k)},1)}$ is a $4\sqrt{2\,(1+62\ln(3))}\,e^{\frac{5}{2}}$-sub-Gaussian collection of confidence intervals for $(\sP_{2,k{\rm -reg}},n,e^{3-n/(124 k)})$.\end{claim}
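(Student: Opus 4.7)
The plan is to apply \lemref{quantile} to the vector of block means $Y_i=\Pphat_{B_i}X$, $i\in[b]$, since by construction $\Ihat_{n,\delta}(X_1^n)=[q_{1/4}(Y_1^b),q_{3/4}(Y_1^b)]$. Each $Y_i$ already has mean $\mu_{\Pp}$ and variance $\sigma_{\Pp}^2/|B_i|$, so the work is to (a)~check that $Y_1^b$ satisfies the one-sided quantile hypothesis of the lemma, (b)~convert the bound $3e^{-db}$ on the failure probability into $\leq\delta$, and (c)~rewrite the length bound as the required sub-Gaussian form.

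For (a), the key point is that each $Y_i$ is the empirical mean of $|B_i|\geq \lfloor n/b\rfloor$ i.i.d.\ samples from $\Pp\in \sP_{2,k{\rm -reg}}$. The event $\{Y_i\leq \mu_{\Pp}\}$ has probability $p_-(\Pp,|B_i|)$ and similarly for the opposite direction, so by definition of the $k$-regular class it suffices to verify $|B_i|\geq k$. This is exactly what the delicate bound~\eqnref{bdelicate} is for: it gives $b\leq n/(2k)$, hence $|B_i|\geq \lfloor n/b\rfloor \geq 2k\geq k$. The same inequality also yields $|B_i|\geq n/(2b)$ (using $b\leq n/2$), so $\Var(Y_i)\leq 2b\sigma_{\Pp}^2/n$, which is the $\sigma^2$ I will feed into the lemma.

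For (b), \lemref{quantile} applied with $\sigma^2=2b\sigma_{\Pp}^2/n$ yields, with probability $\geq 1-3e^{-db}$, both $\mu_{\Pp}\in\Ihat_{n,\delta}(X_1^n)$ and $\ellhat_{n,\delta}(X_1^n)\leq 2L_0\sigma_{\Pp}\sqrt{2b/n}$. The choice $b=\lceil 62\ln(3/\delta)\rceil$ is tuned so that $db\geq \ln(3/\delta)$ (using $d>1/62$ from the lemma), giving $3e^{-db}\leq 3\cdot(\delta/3)=\delta$ as needed. For (c), a direct arithmetic check shows $2b\leq 2(1+62\ln 3)(1+\ln(1/\delta))$ (for instance, because $62\ln 3>61$), so the length is bounded by $2L_0\sqrt{2(1+62\ln 3)}\,\sigma_{\Pp}\sqrt{(1+\ln(1/\delta))/n}$. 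Plugging in the bound $L_0\leq 2e^{5/2}$ reproduces the advertised constant $4\sqrt{2(1+62\ln 3)}\,e^{5/2}$.

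The main obstacle is the balancing act in the choice of $b$. On the one hand, $b$ must be at least of order $\ln(1/\delta)$ so that the quantile deviation probability $3e^{-db}$ drops below $\delta$; on the other hand, $b$ must be small enough ($\leq n/(2k)$) that every block still contains at least $k$ samples, which is what the $k$-regularity assumption demands in order to lower-bound $p_{\pm}(\Pp,|B_i|)$ by $1/3$. The hypotheses $n\geq (3+\ln 4)\cdot 124k$ and $\delta\geq e^{3-n/(124k)}$ are carefully calibrated to make both constraints simultaneously satisfiable; verifying this compatibility is the only nonroutine computation, and it is essentially the content of~\eqnref{bdelicate}.
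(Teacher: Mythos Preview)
Your proposal is correct and follows essentially the same route as the paper: apply \lemref{quantile} to the block means $Y_i$, use $k$-regularity (via \eqnref{bdelicate}) to verify the one-sided hypotheses $p_\pm(\Pp,|B_i|)\geq 1/3$, convert $3e^{-db}\leq\delta$ using $b=\lceil 62\ln(3/\delta)\rceil$ and $d>1/62$, and finally bound $1/s\leq 2b/n\leq 2(1+62\ln 3)(1+\ln(1/\delta))/n$ to recover the sub-Gaussian length. The only cosmetic differences are that the paper feeds $\sigma^2=\sigma_{\Pp}^2/s$ into the lemma and bounds $1/s$ afterwards (you bound the variance by $2b\sigma_{\Pp}^2/n$ up front), and the paper uses $s\geq n/b-1\geq 2k-1\geq k$ whereas you observe the slightly sharper $\lfloor n/b\rfloor\geq 2k$.
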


To see this, we take a distribution $\Pp$ in this family and assume $X_1^n=_d\Pp^{\otimes n}$. Set $s=\lfloor n/b\rfloor$. Because the blocks $B_i$ are disjoint and have at least $s$ elements each, the random variables 
$$Y_i=y_i(X_1^n) = \Pphat_{B_i}X,$$
all have mean $\mu_{\Pp}$ and variance $\leq \sigma_{\Pp}^2/s$. Moreover, using \eqnref{bdelicate}, 
$$s=\left\lfloor \frac{n}{b}\right\rfloor \geq \frac{n}{b}-1 \geq 2k-1\geq k,$$
so the $k$-regularity property implies  that for all $i\in [b]$,
$$\Pr{Y_i\leq \mu}\geq \frac{1}{3},\,\Pr{Y_i\geq \mu}\geq \frac{1}{3}.$$
\lemref{quantile} implies
\begin{equation}\label{eq:claimfollows}\Pr{\mu_{\Pp}\in \Ihat_{n,\delta}(X_1^n)\mbox{ and length of }\Ihat_{n,\delta}(X_1^n)\leq 2\,L_0\,\frac{\sigma}{\sqrt{s}}}\geq 1 - 3\,e^{-db}\geq 1-\delta\end{equation}
by the choice of $b$ and the fact that $d\geq 1/62$. To finish, we use \eqnref{bdelicate} and the definition of $b$ to obtain$$\frac{1}{s}=\frac{1}{\lfloor n/b\rfloor}\leq  \frac{1}{(n/b)-1}\leq \frac{2b}{n} \leq \frac{2(\lceil 62\ln(3) +62\ln(1/\delta)\rceil)}{n}\leq 2(1+62\ln 3)\,\frac{1+\ln(1/\delta)}{n}.$$ Plugging this back into \eqnref{claimfollows} and recalling $L_0\leq 2e^{5/2}$ implies the desired result.

\medskip
\noindent
{\sl Proof of \lemref{quantile}:} 
Define $J=[\mu-L_0\sigma,\mu+L_0\sigma]$. Assume the following three properties hold.
\begin{enumerate}
\item $q_{1/4}(Y_1^b)\leq \mu$.
\item $q_{3/4}(Y_1^b)\geq \mu$. 
\item The number of indices $i\in[b]$ with $Y_i\in J$ is at least $3b/4$.
\end{enumerate}
Then clearly $\mu\in [q_{1/4}(Y_1^b),q_{3/4}(Y_1^b)]$. Moreover, item 3 implies that $q_{1/4}(Y_1^b),q_{3/4}(Y_1^b)\in J$, so that
$$q_{3/4}(Y_1^b)-q_{1/4}(Y_1^b)\leq\mbox{ (length of $J$) }= 2L_0\,\sigma.$$
It follows that
\begin{multline}\label{eq:threeterms}\Pr{\mu\not\in  [q_{1/4}(Y_1^b),q_{3/4}(Y_1^b)]\mbox{ or }q_{3/4}(Y_1^b)-q_{1/4}(Y_1^b)>2L_0\,\sigma}\\ \leq \Pr{q_{1/4}(Y_1^b)> \mu} + \Pr{q_{3/4}(Y_1^b)< \mu} + \Pr{\#\{i\in [b]\,:\, Y_i\not \in J\}>b/4}.\end{multline}
We bound the three terms by $e^{-bd}$ separately. By assumption, $\Pr{Y_i\leq \mu}\geq 1/3$ for each $i\in[b]$. Since there events are also independent, we have that 
$\sum_{i=1}^b\Ind{Y_i\leq \mu}$ stochastically dominates a binomial
random variable ${\rm Bin}(b,1/3).$
Thus, 
$$\Pr{q_{1/4}(Y_1^b)>\mu} = \Pr{\sum_{i=1}^b\Ind{Y_i\leq \mu}<b/4}\leq \Pr{{\rm Bin}(b,1/3)<b/4}\leq e^{-db}$$
by the relative entropy version of the Chernoff bound and the fact
that $d$ is the relative entropy between two Bernoulli distributions with parameters $1/4$ and $1/3$. A similar reasoning shows that $\Pr{q_{3/4}(Y_1^b)> \mu} \leq e^{-db}$ as well.

It remains to bound $\Pr{\#\{i\in [b]\,:\, Y_i\not\in J\}>b/4}$. To this end note that
for all $i\in[b]$,
\begin{equation}\label{eq:notinJbound}\Pr{Y_i\not\in J} = \Pr{|Y_i-\mu|\geq L_0\sigma}\leq \frac{1}{L_0^2},\end{equation}
and these events are independent. It follows that  
\begin{eqnarray*}\Pr{\#\{i\in [b]\,:\, Y_i\not\in J\}>b/4}&\leq & \Pr{\bigcup\limits_{A\subset [b],\, |A|=\lceil b/4\rceil}\bigcap_{i\in A}\{Y_i\not\in J\}}\\ 
\mbox{(union bound)}&\leq & \binom{b}{\lceil b/4\rceil}\,\max\limits_{A\subset [b],\, |A|=\lceil b/4\rceil}\Pr{\bigcap_{i\in A}\{Y_i\not\in J\}}\\
\mbox{(independence of $Y_i$ +\eqnref{notinJbound})} &\leq &\binom{b}{\lceil b/4\rceil}\,\left(\frac{1}{L_0^2}\right)^{-\left\lceil \frac{b}{4}\right\rceil}\\
\mbox{($\binom{b}{k}\leq (eb/k)^{k}$ for all $1\leq k\leq b$)} &\leq &\left(\frac{e\,b}{L^2_0\,\lceil b/4\rceil}\right)^{\left\lceil\frac{b}{4}\right\rceil}\\ \mbox{($b\leq 4\lceil b/4\rceil$ and $L_0^2=4e^{4d+1}$)} &\leq & e^{-4d\left\lceil \frac{b}{4}\right\rceil}\leq e^{-bd}.\end{eqnarray*}

\subsection{Symmetric distributions}\label{sec:symmetric}

To prove \thmref{symmetric}, notice that the existence of the \mult~sub-Gaussian estimator follows from \thmref{regular}. The second part is a simple consequence of \thmref{lowerfixed} and the fact that Laplace distributions are symmetric around their means.

\subsection{Higher moments}\label{sec:alphaclass}

In this section we first prove that $\sP_{\alpha,\eta}\subset
\sP_{2,k{\rm -reg}}$ for large enough $k$, and then prove
\thmref{alphaclass}. We recall the definition of $\min(p_+(\Pp,j)$ and
$p_-(\Pp,j)$ from \defref{regular}.

\begin{lemma}\label{lem:kregmoments}
For all $\alpha\in (2,3]$, there exists $C=C_\alpha$ such that, if
$j\geq (C_{\alpha}\eta)^{\frac{2\alpha}{\alpha-2}}$, then
$\min(p_+(\Pp,j),p_-(\Pp,j)) \geq 1/3$. 
\end{lemma}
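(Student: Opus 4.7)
The plan is to invoke a Berry--Esseen-type bound for i.i.d.\ sums under a finite $\alpha$-th central moment with $\alpha \in (2,3]$ (sometimes attributed to Katz or Bikelis; see, e.g., Petrov's book \cite{Pet75}). This inequality states that, if $X_1,\dots,X_j$ are i.i.d.\ with mean $\mu$, variance $\sigma^2>0$, and $M_\alpha := \EXP|X_1-\mu|^\alpha < \infty$, then there exists a constant $B_\alpha$ depending only on $\alpha$ such that
\[
\sup_{x\in\R}\,\left| \Pr{\frac{1}{\sigma\sqrt{j}}\sum_{i=1}^j (X_i - \mu) \le x} - \Phi(x) \right| \;\le\; \frac{B_\alpha\, M_\alpha}{\sigma^{\alpha}\, j^{(\alpha-2)/2}}.
\]
For $\Pp\in\sP_{\alpha,\eta}$ we have $M_\alpha/\sigma^\alpha \le \eta^\alpha$, so the right-hand side is at most $B_\alpha\,\eta^\alpha / j^{(\alpha-2)/2}$.

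Next I would evaluate this uniform bound at (or on either side of) $x=0$, where $\Phi(0)=\tfrac12$. Writing $F_j$ for the CDF of the standardized sum and using the continuity of $\Phi$, one gets
\[
\left|p_-(\Pp,j) - \tfrac{1}{2}\right| = |F_j(0) - \Phi(0)|\le \frac{B_\alpha\,\eta^\alpha}{j^{(\alpha-2)/2}},\qquad \left|p_+(\Pp,j) - \tfrac{1}{2}\right| = |1 - F_j(0^-) - \tfrac{1}{2}|\le \frac{B_\alpha\,\eta^\alpha}{j^{(\alpha-2)/2}},
\]
the second inequality following by taking a left-limit in the uniform Berry--Esseen bound (which is valid since $\Phi$ is continuous at $0$, so the possible jump of $F_j$ at $0$ does not inflate the bound).

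Finally, to force $\min(p_+(\Pp,j),p_-(\Pp,j)) \ge 1/3$ it suffices that the common bound $B_\alpha\,\eta^\alpha / j^{(\alpha-2)/2}$ be at most $1/6$. Solving gives $j \ge (6B_\alpha)^{2/(\alpha-2)}\,\eta^{\,2\alpha/(\alpha-2)}$, which can be written as $j \ge (C_\alpha\,\eta)^{\,2\alpha/(\alpha-2)}$ with $C_\alpha := (6B_\alpha)^{1/\alpha}$. This is exactly the conclusion of the lemma. The only real ingredient is the Berry--Esseen inequality under a $(2{+}\varepsilon)$-moment assumption; the rest is bookkeeping and the minor care needed to pass to the $x=0^-$ limit (which is harmless since $\Phi$ is continuous).
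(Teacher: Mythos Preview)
Your argument is correct. The Berry--Esseen inequality under an $\alpha$-th moment assumption ($2<\alpha\le 3$) is a standard result (indeed stated in \cite{Pet75}), and evaluating it at $x=0$ (and at $0^-$, which is handled exactly as you say since $\Phi$ is continuous) immediately gives $|p_\pm(\Pp,j)-1/2|\le B_\alpha\eta^\alpha/j^{(\alpha-2)/2}$, from which the claimed threshold on $j$ follows. You should also note, as the paper does, that one may assume $\sigma_{\Pp}>0$, the degenerate case being trivial.

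This is a genuinely different route from the paper's proof. The paper does \emph{not} invoke a Berry--Esseen theorem; instead it works directly with Lindeberg's replacement argument applied to a single smooth test function $\Psi$ satisfying $0\le\Psi\le \mathbf{1}_{[0,\infty)}$ and $\Ex{\Psi(N)}>1/3$, obtaining
\[
p_+(\Pp,j)\ \ge\ \Ex{\Psi(N)} - C_0\,\eta^\alpha\,j^{-(\alpha-2)/2},
\]
and similarly for $p_-$. Your approach is shorter and more ``off-the-shelf,'' but it leans on a less commonly quoted form of Berry--Esseen (the $\alpha$-moment version rather than the third-moment version). The paper's approach is more self-contained in that it only needs the elementary Lindeberg swapping trick, at the cost of introducing an auxiliary smooth $\Psi$. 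Both yield the same dependence $j\gtrsim (C_\alpha\eta)^{2\alpha/(\alpha-2)}$.
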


\begin{proof}
We only prove that $p_+(\Pp,j)\geq 1/3$, as the other proof is analogous. 

Let $N$ be a standard normal random variable. Take some smooth
function $\Psi:\R\to\R$ with bounded second and third derivatives,
such that $\Psi(x)=0$ for $x\in (-\infty,0]$, $0\leq \Psi(x)\leq 1$
for $x>0$ and $\Ex{\Psi(N)}\geq 1/\sqrt{6}$. (It is easy to see that
such a $\Psi$ exists.) 
Also let $X_1^j=_d\Pp^{\otimes j}$ and assume, without loss of
generality, that $\sigma_{\Pp}>0$. Then
$$p_+(\Pp,j) = \Pr{\frac{1}{\sigma^2_\Pp\sqrt{j}}\sum_{i=1}^j(X_i-\mu_{\Pp})\geq 0}\geq \Ex{\Psi\left(\frac{1}{\sigma^2_\Pp\sqrt{j}}\sum_{i=1}^j(X_i-\mu_{\Pp})\right)}.$$
Lindberg's proof of the central limit theorem (see \cite{Stroock_ProbTheory}), specialized to the case where $X_1^j$ are i.i.d., gives  
$$\Ex{\Psi\left(\frac{1}{\sigma^2_\Pp\sqrt{j}}\sum_{i=1}^j(X_i-\mu_{\Pp})\right)}\geq \Ex{\Psi(N)} - C_0\,j\,\Pp\left[{\phi\left(\frac{X-\mu_{\Pp}}{\sigma_{\Pp}\sqrt{j}}\right)}\right],$$
where $\phi(t) = t^2\wedge t^3$ and $C_0>0$ is a universal constant. 
Since $\Ex{\Psi(N)}\geq 1/\sqrt{6}>1/3$ and $\phi(t)\leq t^{\alpha}$, we obtain
$$\Ex{\Psi\left(\frac{1}{\sigma^2_\Pp\sqrt{j}}\sum_{i=1}^j(X_i-\mu_{\Pp})\right)}\geq \frac{1}{\sqrt{6}} - C_0\,j^{\frac{\alpha}{2}-1}\,\eta^\alpha~.$$
The right-hand side is $\geq 1/3$ when $j\geq (C\eta)^{\frac{2\alpha}{\alpha-2}}$ for some universal $C=C_\alpha$. \end{proof}

\medskip
\noindent
{\sl Proof of \thmref{alphaclass}:} 
The positive result follows directly from \thmref{regular} plus
\lemref{kregmoments}, which guarantees $p_\pm(\Pp,j)\geq 1/3$ for
$j\geq  k_{\alpha}$. For the second part, we first assume
$\eta>\eta_0$ for a sufficiently large constant $\eta_0$. We use the Poisson family of distributions from \secref{poisson}. For $\lambda=\liloh{1}$ and $\alpha\in(2,3]$, we have that
$$\Po_{\lambda}|X-\lambda|^{\alpha} = (1+\liloh{1})\,\lambda = (1+\liloh{1})\,\sigma_{\Po_{\lambda}}^{\alpha}\,\lambda^{-\frac{\alpha-2}{2}}.$$
If we compare this to Example \ref{example:alphaclass}, we see that $\Po_{\lambda}\in \sP_{\alpha,\eta}$ if $\lambda \geq h\,/\eta^{2\alpha/(\alpha-2)}$ for some constant $h=h_\alpha>0$ (recall we are assuming that $\eta\geq \eta_0$ is at least a large constant). Now take $c>0$ such that $c/n=h\,/\eta^{2\alpha/(\alpha-2)}$. If $c>c_0$ for the constant $c_0$ in the statement of \thmref{negativestrong}, we can apply the theorem to deduce that there is no \mult~estimator for $(\sP^{[c/n,\phi(L)\,c/n]}_{\Po},n,e^{-\,c})$. Noting that $c$ is of the order $n/k_\alpha$ finishes the proof in this case. 

Now assume $\eta\leq \eta_0$. In this case we use the Laplace
distributions in \secref{laplace}. Since $2<\alpha\leq 3$, we may
apply the fact that the central third moment of a Laplace distribution satisfies
$\La_{\lambda}|X-\lambda|^{3} = 6\leq (3^{1/3}\,2^{1/6}\sigma_{\La_{\lambda}})^3$ 
to obtain
$$\La_{\lambda}|X-\lambda|^{\alpha}\leq \left(\La_{\lambda}|X-\lambda|^{3}\right)^{\alpha/3}\leq (3^{1/3}2^{1/6}\,\sigma_{\La_{\lambda}})^\alpha.$$
Our assumption on $\eta$ implies that $\sP_{\La}\subset \sP_{\alpha,\eta}$. Thus \thmref{lowerfixed} implies that there is no \single~or \mult~sub-Gaussian estimator for $(\sP_{\alpha,\eta},n.e^{1-5L^2n})$. This is the desired result since $k_\alpha$ is bounded when $\eta\leq \eta_0$. 

Finally, the third part of the theorem follows from the same reasoning as in the previous paragraph.

\section{Bounded kurtosis and nearly optimal constants}\label{sec:BoundedKurtosis}

In this section we prove \thmref{OptConstUnderKurtosis}. Throughout the proof we assume $X=_d \Pp$ and $X_1^n=_d\Pp^{\otimes n}$ for some $\Pp\in\sP_{{\rm krt}\leq \kappa}$, and let $b_{\max}$, $C$, $\xi$ be as in \secref{kurtosis}. Our proof is divided into four steps. 

\begin{enumerate}
\item {\em Preliminary estimates for mean and variance.} 
We use the median-of-means technology to obtain preliminary estimates
for the mean and variance of $\Pp$. These estimates are not good
enough to satisfy the claimed properties, but with extremely high probability they are reasonably close to the true values.
\item {\em Truncation at the ideal point.} We introduce a two-parameter family of truncation-based estimators for $\mu_{\Pp}$, and analyze the behavior of one such estimator, chosen under knowledge of $\mu_{\Pp}$ and $\sigma_{\Pp}$.
\item {\em Truncated estimators are  insensitive.} Finally, we use a chaining argument to show that this two-parameter family is insensitive to the choice of parameters. 
\item {\em Wrap up.} The insensitivity property means that the preliminary estimates from Step 1 are good enough to ``make everything work."
\end{enumerate} 

We conclude the section by a remark on how to obtain a broader range of
$\delta_{\min}$ with a worse constant $L$.

\paragraph{Step 1.}  (Preliminary estimates via median of means.)
Denote by $\muhat_{b_{\max}}=\muhat_{b_{\max}}(X_1^n)$ the estimator given by \thmref{MoM2} with $\delta=e^{-b_{\max}}$, which is possible if $C\ge 6/(1-\log 2)$. The next lemma provides an estimator of the variance.
\begin{lemma}\label{lem:MoMVariance}
Let $B_1,\ldots, B_{b_{\max}}$ denote a partition of $[n]$ into blocks of size $|B_i|\ge k=\lfloor n/b_{\max}\rfloor\ge 2$. For each block $B_i$ with $i\in[b_{\max}]$, define 
\[\shat_{i}^2=\frac1{|B_i|(|B_i|-1)}\sum_{j\ne k\in B_i}(X_j-X_k)^2\quad \mbox{ and }\quad\nuhat_{b_{\max}}^2=q_{1/2}(\shat_1,\ldots,\shat_{b_{\max}})~.\]
Then
\[\Pr{\absj{\nuhat_{b_{\max}}^2-\sigma_{\Pp}^2}\le 2e\sqrt{6(\kappa+3)}\sigma_{\Pp}^2\sqrt{\frac{b_{\max}}{n}}}\ge 1-e^{-b_{\max}}~.\]
In particular, if 
\[\frac{96 e (\kappa+3)\,b_{\max}}n\le 1~,\]
then
\[\Pr{\absj{\muhat_{b_{\max}}-\mu_{\Pp}}\le
  2\sqrt{2}\,e\nuhat_{b_{\max}}\sqrt{\frac{b_{\max}}n} \ \mbox{ and }
  \ \nuhat_{b_{\max}}^2\le \frac32\sigma_{\Pp}^2}\ge 1-2e^{-b_{\max}}~.\]
\end{lemma}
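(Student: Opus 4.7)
The lemma makes two claims, which I would establish separately and combine via a union bound: (i) concentration of $\nuhat_{b_{\max}}^2$ around $\sigma_{\Pp}^2$ at rate $\sqrt{b_{\max}/n}$, proved by a median-of-means argument on block U-statistics; and (ii) under the extra smallness hypothesis on $(\kappa+3)b_{\max}/n$, a joint bound on $\muhat_{b_{\max}}$ expressed in units of $\nuhat_{b_{\max}}$ together with $\nuhat_{b_{\max}}^2 \le \tfrac{3}{2}\sigma_{\Pp}^2$.

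For (i), I would first observe that each $\shat_i^2$ is an unbiased estimator of $\sigma_{\Pp}^2$: the identity $\Ex{(X_j-X_k)^2} = 2\sigma_{\Pp}^2$ for distinct i.i.d.\ $X_j,X_k$, together with the normalization $|B_i|(|B_i|-1)$, gives $\Ex{\shat_i^2} = \sigma_{\Pp}^2$. The core computation is the variance bound. Since $\shat_i^2$ is a U-statistic of degree two, I would decompose the variance of its defining double sum by classifying pairs-of-index-pairs according to whether they share zero, one, or two indices. Disjoint index-pairs contribute no covariance; a pair with itself contributes $\Var{(X_j-X_k)^2} = 2(\kappa_{\Pp}+1)\sigma_{\Pp}^4$, obtained from $\Ex{(X-Y)^4} = 2(\kappa_{\Pp}+3)\sigma_{\Pp}^4$ for centered i.i.d.\ $X,Y$; and pairs sharing exactly one index contribute $(\kappa_{\Pp}-1)\sigma_{\Pp}^4$ via the conditioning identity $\Ex{(X_j-X_k)^2(X_j-X_{k'})^2 \mid X_j} = ((X_j-\mu_{\Pp})^2+\sigma_{\Pp}^2)^2$. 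Summing the contributions and using $|B_i| \ge \lfloor n/b_{\max}\rfloor \ge n/(2b_{\max})$ (valid whenever $n\ge 2b_{\max}$) produces $\Var{\shat_i^2}\le 6(\kappa+3)\sigma_{\Pp}^4\,b_{\max}/n$ for a suitable choice of constants. Because each $\shat_i \ge 0$ and $x\mapsto x^2$ is monotone on $[0,\infty)$, the median commutes with squaring, so $\nuhat_{b_{\max}}^2 = q_{1/2}(\shat_1^2,\ldots,\shat_{b_{\max}}^2)$; applying \lemref{median} to the squared variables with $L_0=e$ then yields (i).

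For (ii), \thmref{MoM2} applied with $\delta=e^{-b_{\max}}$ gives $|\muhat_{b_{\max}}-\mu_{\Pp}|\le 2\sqrt{2}\,e\,\sigma_{\Pp}\sqrt{b_{\max}/n}$ with probability at least $1-e^{-b_{\max}}$. Under the extra hypothesis $96e(\kappa+3)b_{\max}/n \le 1$, the concentration radius from (i) is tight enough to trap $\nuhat_{b_{\max}}^2 \in [\tfrac{1}{2}\sigma_{\Pp}^2, \tfrac{3}{2}\sigma_{\Pp}^2]$, which simultaneously delivers $\nuhat_{b_{\max}}^2\le \tfrac{3}{2}\sigma_{\Pp}^2$ and $\sigma_{\Pp}\le\sqrt{2}\,\nuhat_{b_{\max}}$. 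Substituting the latter into the mean bound and union-bounding with the variance event then produces the second display with probability at least $1-2e^{-b_{\max}}$.

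\textbf{Main obstacle.} The only delicate step is the variance computation for the U-statistic $\shat_i^2$: correctly tallying the three types of covariances between pairs of index-pairs and invoking the kurtosis hypothesis at the right moment to control the resulting fourth-moment terms. Everything else reduces to direct applications of \lemref{median} (for the median of the $\shat_i^2$) and \thmref{MoM2} (for the companion mean estimator $\muhat_{b_{\max}}$).
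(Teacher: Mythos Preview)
Your proposal is correct and follows essentially the same route as the paper: bound the variance of each block U-statistic $\shat_i^2$ using the fourth-moment identities $\Ex{(X_j-X_k)^4}=2(\kappa_{\Pp}+3)\sigma_{\Pp}^4$ and $\Ex{(X_j-X_k)^2(X_j-X_l)^2}=(\kappa_{\Pp}+3)\sigma_{\Pp}^4$, then invoke \lemref{median} with $L_0=e$ on the $\shat_i^2$, and finally combine with \thmref{MoM2} for $\muhat_{b_{\max}}$ via a union bound. The only cosmetic difference is that the paper computes $\Ex{\shat_i^4}$ directly and then subtracts $(\Ex{\shat_i^2})^2$, whereas you organize the same calculation as a covariance decomposition over pairs-of-index-pairs; your explicit remark that the median commutes with squaring on nonnegative reals is a clarification the paper leaves implicit.
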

\begin{proof}
 Compute
 
\begin{align*}
 \Ex{\shat_{i}^4}&=\frac1{|B_i|^2(|B_i|-1)^2}\sum_{(j, k)\in B_i^{(2)}}\Ex{(X_j-X_k)^4}\\
 &+\frac6{|B_i|^2(|B_i|-1)^2}\sum_{(j,k,l)\in B_i^{(3)}}\Ex{(X_j-X_k)^2(X_j-X_l)^2}\\
&+\frac1{|B_i|^2(|B_i|-1)^2}\sum_{(j,k,l,m)\in B_i^{(4)}}\Ex{(X_j-X_k)^2(X_l-X_m)^2}
\end{align*}
Expanding all the squares, using independence and noticing that $\Ex{X_j-\mu_{\Pp}}=0$, we get
\[\Ex{(X_j-X_k)^4}=2(\kappa_{\Pp}+3)\sigma_{\Pp}^4~,\]
\[\Ex{(X_j-X_k)^2(X_j-X_l)^2}=(\kappa_{\Pp}+3)\sigma_{\Pp}^4,\; \Ex{(X_j-X_k)^2(X_l-X_m)^2}=4\sigma_{\Pp}^4~. \]
Therefore,
\[ \Ex{\shat_{i}^4}\le \paren{\frac{3(\kappa_{\Pp}+3)}{|B_i|}+1}\sigma_{\Pp}^4\le \Ex{\shat_i^2}^2+6(\kappa+3)\sigma_{\Pp}^4\frac{b_{\max}}{n}~.\]
\lemref{median} with $L_0=e$ gives then
\[\Pr{\absj{\nuhat_{b_{\max}}^2-\sigma_{\Pp}^2}>2e\sqrt{6(\kappa+3)}\sigma_{\Pp}^2\sqrt{\frac{b_{\max}}{n}}}\le e^{-b_{\max}}~.\]
In particular, we get
\[\Pr{\frac12\sigma_{\Pp}^2\le \nuhat_{b_{\max}}^2\le
  \frac32\sigma_{\Pp}^2}\ge 1- e^{-b_{\max}}~.\]
The theorem follows by the definition of $\muhat_{b_{\max}}$ and an application of \thmref{MoM2}.
\end{proof}

\paragraph{Step 2:}  (Two-parameter family of estimators at the ideal
point.) 
Given $\mu$ and $R$ define,  for all $x\in \R$,
\[
\Psi_{\mu,R}(x)=\mu+\paren{\frac{R}{\absj{x-\mu}}\wedge
  1}\paren{x-\mu}~.
\]
\begin{lemma}\label{lem:Benett}
Assume $b_{\max}\ge t$, $R=\sigma_{\Pp}\sqrt{n/b_{\max}}$ and
$\mu=\mu_{\Pp}$. Then, with probability at least $1-2e^{-t}$,
\[\absj{\Pphat_n\Psi_{\mu,R}-\mu_{\Pp}}\le 2\sqrt2\kappa_{\Pp}\sigma_{\Pp} \paren{\frac{b_{\max}}{n}}^{3/2}+\sqrt{\frac{2t}{n}}\sigma_{\Pp}\paren{1+\frac{1}{3\sqrt{2}}\sqrt{\frac{\kappa_{\Pp} t}n}+\frac5{48}\frac{\kappa_{\Pp}t}n}§~.\] 
\end{lemma}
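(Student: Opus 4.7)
The plan is to split $\Pphat_n\Psi_{\mu_{\Pp},R}-\mu_{\Pp}$ into a bias term $\EXP\Psi_{\mu_{\Pp},R}(X)-\mu_{\Pp}$ and a stochastic fluctuation $\Pphat_n\Psi_{\mu_{\Pp},R}-\EXP\Psi_{\mu_{\Pp},R}(X)$, and then bound each separately by exploiting the identity $|\Psi_{\mu_{\Pp},R}(x)-\mu_{\Pp}|=\min(R,|x-\mu_{\Pp}|)$. Set $Z_i:=\Psi_{\mu_{\Pp},R}(X_i)-\mu_{\Pp}$, so that $|Z_i|\le R$ almost surely and $|Z_i|\le|X_i-\mu_{\Pp}|$; in particular $\EXP Z_i^2\le\sigma_{\Pp}^2$, and by interpolation between $\EXP Z^2$ and $\EXP Z^4$ one gets $\EXP|Z_i|^3\le\sqrt{\kappa_{\Pp}}\,\sigma_{\Pp}^3$ and $\EXP|Z_i|^4\le\kappa_{\Pp}\sigma_{\Pp}^4$.

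For the bias, the identity $\EXP[X-\mu_{\Pp}]=0$ yields, after a short calculation splitting the expectation on the event $\{|X-\mu_{\Pp}|>R\}$,
$$\EXP\Psi_{\mu_{\Pp},R}(X)-\mu_{\Pp}=-\EXP\bigl[(|X-\mu_{\Pp}|-R)\,\mathrm{sgn}(X-\mu_{\Pp})\,\mathbf{1}\{|X-\mu_{\Pp}|>R\}\bigr],$$
whose absolute value is at most $\EXP[|X-\mu_{\Pp}|\,\mathbf{1}\{|X-\mu_{\Pp}|>R\}]$. Using $\mathbf{1}\{|X-\mu_{\Pp}|>R\}\le(|X-\mu_{\Pp}|/R)^3$ gives $|\EXP\Psi_{\mu_{\Pp},R}(X)-\mu_{\Pp}|\le\EXP|X-\mu_{\Pp}|^4/R^3=\kappa_{\Pp}\sigma_{\Pp}^4/R^3$, and substituting $R=\sigma_{\Pp}\sqrt{n/b_{\max}}$ produces a bias of order $\kappa_{\Pp}\sigma_{\Pp}(b_{\max}/n)^{3/2}$, matching the first summand in the stated inequality up to the numerical constant $2\sqrt{2}$.

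For the deviation $|\tfrac1n\sum_i(Z_i-\EXP Z_i)|$, apply Bennett's inequality with variance proxy $v\le\sigma_{\Pp}^2$, using the sharp moment-based bound $\log\EXP e^{\lambda(Z-\EXP Z)}\le\sum_{k\ge2}\lambda^k\EXP|Z|^k/k!$ fed by the interpolated moment inequalities $\EXP|Z|^k\le\kappa_{\Pp}^{(k-2)/2}\sigma_{\Pp}^k$ for $k\in\{3,4\}$ rather than the crude $L^{\infty}$ bound $|Z|\le R$. Inverting the Bennett rate function $h(u)=(1+u)\log(1+u)-u$ at level $t$ through the expansion $h^{-1}(s)=\sqrt{2s}+\tfrac{2s}{3}+O(s^{3/2})$ yields
$$\Bigl|\tfrac1n\sum_{i=1}^n(Z_i-\EXP Z_i)\Bigr|\le\sigma_{\Pp}\sqrt{2t/n}\Bigl(1+c_1\sqrt{\kappa_{\Pp}t/n}+c_2\,\kappa_{\Pp}t/n\Bigr)$$
with probability at least $1-2e^{-t}$, where $\sqrt{\kappa_{\Pp}}$ (rather than $R/\sigma_{\Pp}=\sqrt{n/b_{\max}}$) appears in the successive corrections precisely because we swapped the $L^\infty$ bound for the moment bounds above. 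The hypothesis $t\le b_{\max}$ ensures $\sqrt{\kappa_{\Pp}t/n}$ and $\kappa_{\Pp}t/n$ are controlled, so the Taylor expansion of $h^{-1}$ may be truncated after two correction terms.

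The main technical obstacle is identifying the precise numerical constants $2\sqrt{2}$, $1/(3\sqrt{2})$ and $5/48$: the first requires careful tracking of signs and Cauchy--Schwarz steps in the tail-expectation bound for the bias, while the latter two come from comparing the first two Taylor coefficients of $h^{-1}$ with the coefficients produced by collecting the $\lambda^3$ and $\lambda^4$ contributions to $\log\EXP e^{\lambda(Z-\EXP Z)}$ using the interpolated moment bounds. Combining the bias and deviation estimates by the triangle inequality then produces the lemma with the stated probability $1-2e^{-t}$.
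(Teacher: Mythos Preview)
Your overall strategy---bound the bias via the fourth moment, then control the fluctuation through an mgf estimate with sharp low-order moment bounds---is correct and matches the paper. The bias step is essentially identical (the paper uses H\"older where you use $\mathbf{1}\{|X-\mu_{\Pp}|>R\}\le|X-\mu_{\Pp}|^3/R^3$; both give $\kappa_{\Pp}\sigma_{\Pp}^4/R^3$). But your fluctuation step is muddled in a way that hides a real gap.

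The muddle is that you invoke ``the Bennett rate function $h$'' while simultaneously saying you replace the $L^\infty$ bound by moment bounds; these are incompatible, since $h$ is exactly what one gets by feeding $\EXP|Z|^k\le R^{k-2}\sigma_{\Pp}^2$ into the mgf for \emph{every} $k\ge2$. The gap is that you supply moment bounds only for $k\le4$ and say nothing about $k\ge5$, so the log-mgf series you wrote down is left unbounded. The paper resolves both issues together and never inverts anything: it expands the \emph{uncentered} mgf of $\Psi_{\mu,R}(X)-\mu_{\Pp}$ (so the bias is simply the linear term, and no passage between centered and uncentered moments is needed), uses $\EXP|Z|^3\le\sqrt{\kappa_{\Pp}}\,\sigma_{\Pp}^3$ for the cubic term and the hybrid bound $\EXP|Z|^p\le R^{p-4}\kappa_{\Pp}\sigma_{\Pp}^4$ for all $p\ge4$, and then plugs in the single value $s=\sqrt{2nt}/\sigma_{\Pp}$ in Chernoff's bound. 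The hypothesis $t\le b_{\max}$ enters precisely as the condition $|sR/n|\le1$, which collapses the whole tail $\sum_{p\ge5}$ into the $p=4$ term with the factor $1+\sum_{p\ge5}4!/p!\le5/4$; the constant $5/48$ in the statement comes from this $5/4$, not from Taylor coefficients of $h^{-1}$. Your sentence about $t\le b_{\max}$ allowing ``the Taylor expansion of $h^{-1}$ to be truncated'' does not correspond to any actual step.
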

\begin{proof}
 The proof is a consequence of Benett's inequality. It suffices to estimate the moments of $\Psi_{\mu,R}(X)-\mu_{\Pp}$.
 For the first moment, 
\begin{align*}
 \absj{\Ex{\Psi_{\mu,R}(X)-\mu_{\Pp}}}&=\absj{\Ex{\paren{1\wedge\frac{R}{\absj{X-\mu_{\Pp}}}-1}\paren{X-\mu_{\Pp}}}}\\
 &\le\Ex{\paren{1-\frac{R}{\absj{X-\mu_{\Pp}}}}_+\absj{X-\mu_{\Pp}}} \\
 &\le \Ex{\absj{X-\mu_{\Pp}}\Ind{\absj{X-\mu_{\Pp}}>R}} \\
 &\le\Ex{\absj{X-\mu_{\Pp}}^4}^{1/4}\Pr{\absj{X-\mu_{\Pp}}>R}^{3/4} \le\frac{\kappa_{\Pp} \sigma_{\Pp}^4}{R^3} 
\end{align*}
where we used H\"older's inequality. On the other hand,
\begin{align*}
\Ex{\paren{\Psi_{\mu,R}(X)-\mu_{\Pp}}^2}\le \sigma_{\Pp}^2
 \end{align*}
 By the Cauchy-Schwarz inequality, and using the bounded kurtosis assumption,
 \begin{align*}
\Ex{ \absj{\Psi_{\mu,R}(X)-\mu_{\Pp}}^3}&=\Ex{\absj{1\wedge\frac{R}{\absj{X-\mu_{\Pp}}}}^3\absj{X-\mu_{\Pp}}^3}\le \Ex{\absj{X-\mu_{\Pp}}^3}\le \sqrt{\kappa_{\Pp}}\sigma_{\Pp}^3~.
 \end{align*}
 Finally, for any $p\ge 4$, since $\absj{\Psi_{\mu,R}(X)-\mu_{\Pp}}^p\le R$,
 \[\Ex{\absj{\Psi_{\mu,R}(X)-\mu_{\Pp}}^p}\le R^{p-4}\kappa_{\Pp}\sigma_{\Pp}^4~.\]
 For $s=\sqrt{2nt}/\sigma_{\Pp}$, we have $\absj{sR}/n\le 1$ and therefore
\begin{eqnarray*}
\lefteqn{ \Ex{e^{\frac{s}n\paren{\Psi_{\mu,R}(X)-\mu_{\Pp}}}}   } \\
&\le & 1+\frac sn\frac{\kappa_{\Pp} \sigma_{\Pp}^4}{R^3}+\frac{s^2}{2n^2}\sigma_{\Pp}^2+\frac{s^3}{6n^3}\sqrt{\kappa_{\Pp}}\sigma_{\Pp}^3+\frac{s^4}{24n^4}\kappa_{\Pp}\sigma_{\Pp}^4\paren{1+\sum_{p\ge 5}\frac{4!}{p!}}\\
 &\le & \exp\paren{2\sqrt2\frac sn\kappa_{\Pp}\sigma_{\Pp} \paren{\frac{b_{\max}}{n}}^{3/2}+\frac{s^2}{2n^2}\sigma_{\Pp}^2+\frac{s^3}{6n^3}\sqrt{\kappa_{\Pp}}\sigma_{\Pp}^3+\frac{5s^4}{96n^4}\kappa_{\Pp}\sigma_{\Pp}^4}~.
\end{eqnarray*}
By Chernoff's bound,
\[\Pr{\Pphat_n\Psi_{\mu,R}-\mu_{\Pp}>2\sqrt2\kappa_{\Pp}\sigma_{\Pp} \paren{\frac{b_{\max}}{n}}^{3/2}+\frac{s}{n}\sigma_{\Pp}^2+\frac{s^2}{6n^2}\sqrt{\kappa_{\Pp}}\sigma_{\Pp}^3+\frac{5s^3}{96n^3}\kappa_{\Pp}\sigma_{\Pp}^4}\le e^{-\frac{s^2\sigma_{\Pp}^2}{2n}}\]
or, equivalently,
\[\Pr{\Pphat_n\Psi_{\mu,R}-\mu_{\Pp}>2\sqrt2\kappa_{\Pp}\sigma_{\Pp} \paren{\frac{b_{\max}}{n}}^{3/2}+\sqrt{\frac{2t}{n}}\sigma_{\Pp}\paren{1+\frac{1}{3\sqrt{2}}\sqrt{\frac{\kappa_{\Pp} t}n}+\frac5{48}\frac{\kappa_{\Pp}t}n}}\le e^{-t}~.\]
Repeat the same computations with $s=-\sqrt{2nt}/\sigma_{\Pp}$ to prove the lower bound.
\end{proof}

\paragraph{Step 3:} (Insensitivity of the estimators.) 
Given $\epsilon_\mu,\epsilon_R \in (0,1/2)$, define
\[\sR=\set{(\mu,R)\;:\; \absj{\mu-\mu_{\Pp}}\le \epsilon_\mu\sigma_{\Pp},\quad \absj{R-\sigma_{\Pp}\sqrt{n/(2b_{\max})}}\le \epsilon_R\sigma_{\Pp}}~,\]
\[\Delta_{\mu,R}=\Pphat_n\paren{\Psi_{\mu,R}-\Psi_{\mu_{\Pp},\sigma_{\Pp}\sqrt{n/(2b_{\max})}}}~.\]
\begin{lemma}
\label{lem:ConcDelta}
Assume $\sqrt{n/(2b_{\max})}\ge 2(\epsilon_\mu+\epsilon_R)$ then for
any $t>0$, with probability at least $1-e^{-t}$, for all $(\mu,R)\in \sR$,
\[\absj{\Delta_{\mu,R}}\le
(\epsilon_\mu+\epsilon_R)\sigma_{\Pp}\paren{\frac{56b_{\max}}n+\frac{4\sqrt{b_{\max}t}}n+\frac{2t}{3n}}~.
\] 
\end{lemma}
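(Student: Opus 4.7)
The plan is to reduce the assertion to a single Bernstein estimate for a Bernoulli empirical frequency, after an exact pathwise integral representation for $\Psi_{\mu,R}-\Psi_{\mu_{\Pp},R_0}$ (writing $R_0:=\sigma_{\Pp}\sqrt{n/(2b_{\max})}$). The observation that unlocks this is that, for fixed $x$, the map $(\mu,R)\mapsto \Psi_{\mu,R}(x)$ is piecewise affine with weak partial derivatives
\[\partial_R\Psi_{\mu,R}(x)=\mbox{sign}(x-\mu)\,\Ind{|x-\mu|>R},\qquad \partial_\mu\Psi_{\mu,R}(x)=\Ind{|x-\mu|>R},\]
defined off the measure-zero set $\{|x-\mu|=R\}$. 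Integrating along the path $(\mu_{\Pp},R_0)\to(\mu_{\Pp},R)\to(\mu,R)$ and averaging over $X_1,\dots,X_n$ yields
\[\Delta_{\mu,R}=\int_{R_0}^R\frac1n\sum_{i=1}^n\mbox{sign}(X_i-\mu_{\Pp})\Ind{|X_i-\mu_{\Pp}|>r}\,dr+\int_{\mu_{\Pp}}^\mu\frac1n\sum_{i=1}^n\Ind{|X_i-m|>R}\,dm.\]

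\medskip

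\noindent The next step is a deterministic bound, already uniform over $(\mu,R)\in\sR$. The hypothesis $\sqrt{n/(2b_{\max})}\ge 2(\epsilon_\mu+\epsilon_R)$ is exactly $R_0\ge 2(\epsilon_\mu+\epsilon_R)\sigma_{\Pp}$, from which two triangle inequalities follow: any $r$ in the $R$-integration range $[R_0-\epsilon_R\sigma_{\Pp},R_0+\epsilon_R\sigma_{\Pp}]$ satisfies $r\ge R_0/2$; and any $(m,R)\in\sR$ forces $\{|X-m|>R\}\subset\{|X-\mu_{\Pp}|>R_0/2\}$, since $|X-\mu_{\Pp}|\ge|X-m|-|m-\mu_{\Pp}|>R-\epsilon_\mu\sigma_{\Pp}\ge R_0/2$. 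Hence both integrand indicators are pointwise dominated by $\Ind{|X_i-\mu_{\Pp}|>R_0/2}$, and pulling this uniform bound out of the integrals gives
\[\absj{\Delta_{\mu,R}}\le(|R-R_0|+|\mu-\mu_{\Pp}|)\,p_n\le(\epsilon_\mu+\epsilon_R)\sigma_{\Pp}\,p_n,\qquad p_n:=\frac1n\sum_{i=1}^n\Ind{|X_i-\mu_{\Pp}|>R_0/2},\]
for every $(\mu,R)\in\sR$ on every realization of the sample.

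\medskip

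\noindent All the randomness is now concentrated in the single scalar $p_n$, the empirical mean of i.i.d.\ Bernoulli$(p)$ variables with $p=\Pr{|X-\mu_{\Pp}|>R_0/2}\le 4\sigma_{\Pp}^2/R_0^2=8b_{\max}/n$ by Chebyshev. Bernstein's inequality for bounded Bernoullis then yields, with probability at least $1-e^{-t}$,
\[p_n\le p+\sqrt{\frac{2pt}{n}}+\frac{t}{3n}\le\frac{8b_{\max}}{n}+\frac{4\sqrt{b_{\max}t}}{n}+\frac{t}{3n},\]
and multiplying by $(\epsilon_\mu+\epsilon_R)\sigma_{\Pp}$ delivers the claimed form (with constants $8$ and $1/3$ comfortably dominated by the stated $56$ and $2/3$).

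\medskip

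\noindent The main obstacle is essentially careful bookkeeping: justifying the weak-derivative identities away from the measure-zero truncation boundary, and verifying that the hypothesis $\sqrt{n/(2b_{\max})}\ge 2(\epsilon_\mu+\epsilon_R)$ is precisely what is needed to dominate \emph{both} families of indicators simultaneously by $\Ind{|X_i-\mu_{\Pp}|>R_0/2}$. Once this is done, uniformity over the two-parameter set $\sR$ is a deterministic consequence of the path integral rather than of a covering or chaining argument, and the probabilistic step collapses to a single Bernstein inequality for a binomial sum---which is the reason the bound avoids the extraneous $\log$-factors a naive net argument would produce.
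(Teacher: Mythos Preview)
Your argument is correct and in fact sharper than what the paper obtains. Both you and the paper arrive at the same pointwise inequality
\[
\bigl|\Psi_{\mu,R}(x)-\Psi_{\mu_{\Pp},R_0}(x)\bigr|\le (\epsilon_\mu+\epsilon_R)\,\sigma_{\Pp}\,\Ind{|x-\mu_{\Pp}|>R_0/2}\qquad\text{for all }(\mu,R)\in\sR,
\]
(note $R_0/2=\sigma_{\Pp}\sqrt{n/(8b_{\max})}$, so your threshold matches theirs). The paper derives this by combining the $1$-Lipschitz bound with the observation that $\Psi_{\mu,R}(x)=x$ on $\{|x-\mu_{\Pp}|\le R_0/2\}$, whereas you reach it via the weak-derivative path integral; these are just two ways to see the same thing.

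The genuine divergence is in what happens next. The paper treats $\Delta_{\mu,R}-\Delta_{\mu',R'}$ for fixed pairs via Bennett's inequality and then runs a dyadic chaining argument over $\sR$, summing the resulting bounds across scales. You instead note that the pointwise bound already gives the \emph{deterministic} uniform estimate $|\Delta_{\mu,R}|\le(\epsilon_\mu+\epsilon_R)\sigma_{\Pp}\,p_n$ with $p_n=\Pphat_n\,\Ind{|X-\mu_{\Pp}|>R_0/2}$, so uniformity over $\sR$ is free and a single Bernstein inequality for the binomial sum $np_n$ finishes. This is both simpler and yields better constants ($8$ and $1/3$ versus the paper's $56$ and $2/3$); the chaining in the paper is, for this particular lemma, unnecessary overhead.
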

\begin{proof}
 Start with the trivial bound
 \[
\absj{\Psi_{\mu,R}(x)-\Psi_{\mu',R'}(x)}\le
\absj{\mu-\mu'}+\absj{R-R'}
\]
that holds for all $(\mu,R),(\mu',R')\in\sR$ and $x\in\R$.
 Moreover, assume that $\absj{x-\mu_{\Pp}}\le
 \sigma_{\Pp}\sqrt{n/(8b_{\max})}$. Then 
 \[\absj{x-\mu}\le \absj{x-\mu_{P}}+\epsilon_\mu\sigma_{\Pp}\le \sigma_{\Pp}\paren{2\sqrt{n/(8b_{\max})}-\epsilon_R}\le R~.\]
 Hence, for any $x\in \R$ such that $\absj{x-\mu_{\Pp}}\le \sigma_{\Pp}\sqrt{n/(8b_{\max})}$
and for all $(\mu,R)\in\sR$,
\[
\Psi_{\mu,R}(x)=x~.
\]
Therefore, for any $(\mu,R)$ and $(\mu',R')$ in $\sR$ and for any $x\in\R$,
 \[ \absj{\Psi_{\mu,R}(x)-\Psi_{\mu',R'}(x)}\le \paren{\absj{\mu-\mu'}+\absj{R-R'}}\Ind{\absj{x-\mu_{\Pp}}> \sigma_{\Pp}\sqrt{n/(8b_{\max})}}~.\]
By Chebyshev's inequality, this implies that, for any positive
integer $p$,
\[\Pp\absj{\Psi_{\mu,R}-\Psi_{\mu',R'}}^p\le \paren{\absj{\mu-\mu'}+\absj{R-R'}}^p\frac{8b_{\max}}n~.\]
By Bennett's inequality, 
\[\Pr{\frac{\absj{\Delta_{\mu,R}-\Delta_{\mu',R'}}}{\absj{\mu-\mu'}+\absj{R-R'}}>\frac{8b_{\max}}n+\frac{4\sqrt{b_{\max}t}}n+\frac{t}{3n}}\le 2e^{-t}~.\]
To apply a chaining argument, consider the sequence $(D_j)_{j\ge 0}$
of points of $\sR$ obtained by the following
construction. $D_0=(\mu_{\Pp},\sigma_{\Pp}\sqrt{n/(2b_{\max})}$ and,
for any $j\ge 1$, divide $\sR$ into $4^j$ pieces by dividing each axis
into $2^j$ pieces of equal sizes. Define then $D_j$ as the set of
lower left corners of the $4^j$ rectangles. Then $|D_j|= 4^j$ and, for any $(\mu,R)\in \sR$, there exists a point $\pi_j(\mu,R)\in D_j$ such that the $\ell_1$-distance between $(\mu,R)$ and $\pi_j(\mu,R)$ is upper-bounded by $2^{-j}(\epsilon_\mu+\epsilon_R)\sigma_{\Pp}$. Therefore, 
\[\sup_{(\mu,R)\in \sR}\absj{\Delta_{(\mu,R)}}\le \sum_{j\ge 1}\sup_{(\mu,R)\in D_j}\absj{\Delta_{(\mu,R)}-\Delta_{\pi_{j-1}(\mu,R)}}~.\]
A union bound in Bennett's inequality gives that, with probability at least $1-2^{1-j}e^{-t}$, for any $(\mu,R)\in D_j$,
\[\absj{\Delta_{\mu,R}-\Delta_{\pi_{j-1}(\mu,R)}}\le (\epsilon_\mu+\epsilon_R)\sigma_{\Pp}\paren{\frac{16b_{\max}}{2^{j}n}+\frac{8\sqrt{b_{\max}(t+j\log 8)}}{2^jn}+\frac{2t+2j\log 8}{3n2^j}}~.\]
Summing up these inequalities gives the desired bound.
\end{proof}
\begin{corollary}\label{cor:AlmostThere}
 Assume $t\ge 1$, $\sqrt{n/(2b_{\max})}\ge
 2(\epsilon_\mu+\epsilon_R)$. Then, with probability at least $1-2e^{-t}-2e^{-b_{\max}}$,
for all $(\mu,R)\in\sR$,
\[
\absj{\Pphat_n\Psi_{\mu,R}-\mu_{\Pp}}\le \frac{\sigma_{\Pp}}{\sqrt{n}}\paren{\sqrt{2t}(1+\xi_1)+\xi_2}~,\]
 where
 \[\xi_1=\frac{1}{3\sqrt{2}}\sqrt{\frac{\kappa_{\Pp} t}n}+\frac5{48}\frac{\kappa_{\Pp}t}n+2(\epsilon_\mu+\epsilon_R)\paren{\sqrt{\frac{2b_{\max}}n}+\frac13\sqrt{\frac{ t}n}}~,\]
 \[\xi_2=2\sqrt2\kappa_{\Pp}\frac{b_{\max}^{3/2}}{n}+ 56(\epsilon_\mu+\epsilon_R)\frac{b_{\max}}n~.\]
\end{corollary}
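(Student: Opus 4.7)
The approach is to write
\[
\Pphat_n\Psi_{\mu,R}-\mu_{\Pp} \;=\; \bigl(\Pphat_n\Psi_{\mu^\star,R^\star}-\mu_{\Pp}\bigr) + \Delta_{\mu,R},
\]
where $(\mu^\star, R^\star):=(\mu_{\Pp}, \sigma_{\Pp}\sqrt{n/(2b_{\max})})$ is the ideal reference point around which $\sR$ is centered. The first summand will be bounded by a Bennett-type argument at the ideal point (essentially a rerun of \lemref{Benett}), while the second is handled uniformly over $\sR$ by \lemref{ConcDelta}. The announced $\xi_1$ and $\xi_2$ are then obtained by pulling the common factor $\sigma_{\Pp}/\sqrt{n}$ out and regrouping.

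For the ideal-point term I would reprise the proof of \lemref{Benett} with $R^\star$ replacing $\sigma_{\Pp}\sqrt{n/b_{\max}}$. All moment estimates in that proof transfer verbatim; only the first moment is different, becoming
\[
\bigl|\Pp(\Psi_{\mu^\star,R^\star}-\mu_{\Pp})\bigr|\le \frac{\kappa_{\Pp}\sigma_{\Pp}^{4}}{(R^\star)^{3}} = 2\sqrt{2}\,\kappa_{\Pp}\sigma_{\Pp}\bigl(b_{\max}/n\bigr)^{3/2},
\]
which, after extracting $\sigma_{\Pp}/\sqrt n$, is exactly the leading summand of $\xi_2$. The Chernoff step with $s=\pm\sqrt{2nt}/\sigma_{\Pp}$ closes only under $|sR^\star|/n\le 1$, i.e.\ $t\le b_{\max}$, and there it produces the multiplicative inflation $1+\tfrac{1}{3\sqrt 2}\sqrt{\kappa_{\Pp}t/n}+\tfrac{5}{48}\kappa_{\Pp}t/n$ attached to $\sqrt{2t/n}\sigma_{\Pp}$, with probability at least $1-2e^{-t}$. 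To cover $t>b_{\max}$, I would invoke the same bound at $t'=b_{\max}$: monotonicity of its right-hand side in $t$ preserves the inequality, and via $e^{-\min(t,b_{\max})}\le e^{-t}+e^{-b_{\max}}$ this costs only an additive $2e^{-b_{\max}}$ in the failure probability.

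For the perturbation summand, \lemref{ConcDelta} applies directly under the hypothesis $\sqrt{n/(2b_{\max})}\ge 2(\epsilon_\mu+\epsilon_R)$ and delivers, with probability at least $1-e^{-t}$,
\[
\sup_{(\mu,R)\in\sR}\bigl|\Delta_{\mu,R}\bigr|\;\le\; (\epsilon_\mu+\epsilon_R)\sigma_{\Pp}\Bigl(\tfrac{56 b_{\max}}{n}+\tfrac{4\sqrt{b_{\max}t}}{n}+\tfrac{2t}{3n}\Bigr).
\]
Factoring $\sqrt{2t}$ out of the last two summands rewrites them (up to the $\sigma_{\Pp}/\sqrt n$ normalization) as $\sqrt{2t}\cdot 2(\epsilon_\mu+\epsilon_R)\bigl(\sqrt{2b_{\max}/n}+\sqrt{t/n}/3\bigr)$, which together with the Bennett inflation terms of the previous paragraph recover $\sqrt{2t}(1+\xi_1)$; the remaining bias-type piece $56(\epsilon_\mu+\epsilon_R)b_{\max}/n$ combines with the Bennett bias $2\sqrt 2\kappa_{\Pp}(b_{\max}/n)^{3/2}$ to form $\xi_2$.

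A union bound on the two events yields the stated failure probability $2e^{-t}+2e^{-b_{\max}}$, and the reassembled inequality is the corollary's conclusion. The main obstacle is the Bennett-regime constraint $t\le b_{\max}$ at the ideal point: arguing around it while preserving a genuine $\sqrt{2t}$ deviation (rather than degrading to $\sqrt{2\min(t,b_{\max})}$) is precisely what forces the explicit $2e^{-b_{\max}}$ slack, and the precise numerical constants in $\xi_1$ and $\xi_2$ are set so as to absorb both this overhead and the chaining union bound from \lemref{ConcDelta}.
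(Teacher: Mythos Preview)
Your proposal is correct and matches the paper's intended argument: the corollary is stated without proof precisely because it is meant to follow by combining \lemref{Benett} (rerun at the reference point $R^\star=\sigma_{\Pp}\sqrt{n/(2b_{\max})}$, which is what produces the $2\sqrt2$ in $\xi_2$) with \lemref{ConcDelta} via the triangle inequality and a union bound. Your handling of the $t\le b_{\max}$ constraint and the regrouping of terms into $\xi_1,\xi_2$ are exactly the bookkeeping the paper leaves implicit.
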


\paragraph{Step 4:} (Wrap-up.)  Define now 
\[(\muhat_n,\Rhat_n)=\paren{\muhat_{b_{\max}},\nuhat_{b_{\max}}\sqrt{\frac n{2b_{\max}}}}\]
From \lemref{MoMVariance}, with probability at least $1-2e^{-b_{\max}}$,
\[\absj{\muhat_n-\mu}\le 2\sqrt{2}\,e\sigma_{\Pp}\sqrt{\frac{b_{\max}}n},\quad \absj{\nuhat_{b_{\max}}^2-\sigma_{\Pp}^2}\le 2e\sqrt{6(\kappa+3)}\sigma_{\Pp}^2\sqrt{\frac{b_{\max}}{n}}~.\]
The second inequality gives
\[\sqrt{1-2e\sqrt{6(\kappa_{\Pp}+3)}\sqrt{\frac{b_{\max}}{n}}}\le \frac{\nuhat_{b_{\max}}}{\sigma_{\Pp}}\le \sqrt{1+2e\sqrt{6(\kappa_{\Pp}+3)}\sqrt{\frac{b_{\max}}{n}}}\]
Since we can assume that 
\[2e\sqrt{6(\kappa_{\Pp}+3)}\sqrt{\frac{b_{\max}}{n}}\le 1~,\]
we deduce that
\[\absj{\nuhat_{b_{\max}}-\sigma_{\Pp}}\le e\sqrt{12(\kappa_{\Pp}+3)}\sigma_{\Pp}\sqrt{\frac{2b_{\max}}{n}}~.\]
This means that, with probability at least $1-2e^{-b_{\max}}$, $(\muhat_n,\Rhat_n)$ belongs to $\sR$ if we define 
\[\epsilon_\mu=2\sqrt{2}\,e\sqrt{\frac{b_{\max}}n}\le \sqrt{\kappa_{\Pp}},\quad \epsilon_R=2e\sqrt{3(\kappa_{\Pp}+3)}\le 19\sqrt{\kappa_{\Pp}}~.\]
By an appropriate choice of the constant $C$, we can always assume that $\sqrt{n/(2b_{\max})\,\kappa_{\Pp}}$ is at least some large constant, to ensure that $2(\epsilon_{\mu}+\epsilon_R)\leq \sqrt{n/(2b_{\max})}$. So \corref{AlmostThere} applies and gives
\[\Pr{\absj{\Pphat_n\Psi_{\muhat_n,\Rhat_n}-\mu_{\Pp}}\le \frac{\sigma_{\Pp}}{\sqrt{n}}\paren{\sqrt{2t}(1+\xi_1)+\xi_2}}\ge 1-2e^{-t}-4e^{-b_{\max}}~,\]
 where
 \[\xi_1=36\sqrt{\frac{\kappa_{\Pp}\,b_{\max}}{n}},\quad \xi_2=2\sqrt2\kappa_{\Pp}\frac{b_{\max}^{3/2}}{n}+ 1120\sqrt{\kappa_{\Pp}}\frac{b_{\max}}{n}~.\]
In particular, if $\delta>\frac{4e}{e-2}e^{-b_{\max}}$, we get
\begin{multline*}
\Pr{\absj{\Pphat_n\Psi_{\muhat_n,\Rhat_n}-\mu_{\Pp}}\le \frac{\sigma_{\Pp}}{\sqrt{n}}\paren{\sqrt{2(1+\ln(1/\delta))}(1+\xi_1)+\xi_2}}\\
\ge 1-\paren{\frac2e+\frac4{4e/(e-2)}}\delta=1-\delta~.
\end{multline*}

\begin{remark}
\label{rem:smallerdeltamin}
Let us quickly sketch how one may get a smaller value of
$\delta_{\min}$ at the expense of a larger constant $L$. The idea is
to redo the proof of part $1$ of \thmref{secondmoment} (cf.\
\secref{variance}). We build \single~estimators for $\mu_{\Pp}$ via
median-of-means, as in \eqnref{almostconfidence}, but then use the
value $2\shat_{b}(X_1^n)$ from \lemref{MoMVariance} instead of the
value $\sigma^2_2$ when building the confidence interval, with a
choice of $b\approx\ln(1/\delta)$. Then one obtains an empirical confidence interval that contains $\mu_{\Pp}$ and has the appropriate length with probability $\geq 1-2\delta$ whenever $\ln(1/\delta)\leq c\,n/\kappa$ for some constant $c>0$. Using \thmref{confidence} as in \secref{variance} then gives a \mult~$L$-sub-Gaussian estimator for $(\sP_{{\rm krt}\leq \kappa},n,e^{1-cn/\kappa})$ for large enough values of $n/\kappa$, where $L$ does not depend on $n$ or $\kappa$. It is an open question whether one can obtain a similar value of $\delta_{\min}$ with $L=\sqrt{2}+\liloh{1}$.\end{remark}

\section{Open problems}\label{sec:open}
  
We conclude the paper by a partial list of problems related to our results that seem especially interesting.

\noindent\par{\bf Sharper constants and truly sub-Gaussian estimators.}  For what families $\sP$
of distributions and what values of $\delta_{\min}$ can one find
\mult~estimators with sharp constant $L=\sqrt{2}+\liloh{1}$? 
One may even sharpen our definition of a sub-Gaussian estimator and
ask for estimators that satisfy
$$\Pr{|\Ehat_n(X_1^n)-\mu_{\Pp}|>\sigma_{\Pp}\,\frac{\Phi^{-1}(1-\delta/2)}{\sqrt{n}}}\leq (1+\liloh{1})\,\delta$$
for all $\Pp\in\sP$ and $\delta\in[\delta_{\min},1)$? \\

\noindent\par{\bf Sub-Gaussian confidence intervals.} 
The notion of sub-Gaussian confidence interval introduced in \secref{confidence} seems interesting on its own right. For which classes of distributions $\sP$ can one find sub-Gaussian confidence intervals? Can one reverse the implication in \thmref{confidence}, and build sub-Gaussian confidence intervals from \mult~estimators?\\

\noindent\par{\bf Empirical risk minimization.} Suppose now that the $X_i$ are i.i.d. random variables that live in an arbitrary measurable space and have common distribution $\Pp$. In a prototypical risk minimization problem, one wishes to find an approximate minimum $\widehat{\theta}_n(X_1^n)$ of a functional $\ell(\theta):=\Pp\,f(\theta,X)$ over choices of $\theta\in\Theta$. The usual way to do this is via empirical risk minimization, which consists of minimizing the empirical risk $\ellhat_n(\theta):=\Pphat \,f(\theta,X)$ instead. Under strong assumptions on the family $F:=\{f(\theta,\cdot)\}_{\theta\in\Theta}$ (such as uniform boundedness), the fluctuations of the empirical process $\{(\Pphat_n-\Pp)\,f(\theta,X)\}_{\theta\in\Theta}$ can be bounded in terms of geometric or combinatorial properties of $F$, and this leads to results on empirical risk minimization. However, the strong sub-Gaussian concentration results one may obtain are only available when $F$ has very light tails. 

A natural way to obtain strong sub-Gaussian concentration for heavier-tailed $F$ would be to replace the usual empirical estimates $\Pphat\,f(\theta,X)$ by one of our \mult~sub-Gaussian estimates. This, however, is not straightforward. The usual chaining technique for controlling empirical processes rely on linearity, and our estimators are nonlinear in the sample. Although there are (artificial) ways around this, we do not know of any {\em efficient method} for doing the analogue of empirical risk minimization with our estimators in any nontrivial setting. These difficulties were overcome by Brownlees et al. \cite{BrownleesEtAlHeavyTailed} via Catoni's \mult~subexponential estimator, at the cost of obtaining weaker concentration. Can one do something similar and achieve truly sub-Gaussian results at low computational cost?

\section{Acknowledgements}

Luc Devroye was supported by the Natural Sciences and Engineering Research Council (NSERC) of Canada.
G\'abor Lugosi and Roberto Imbuzeiro Oliveira gratefully acknowledge support from CNPq, Brazil
via the {\em Ci\^{e}ncia sem Fronteiras} grant \# 401572/2014-5. 
G\'abor Lugosi was supported by the Spanish Ministry of Science and Technology grant MTM2012-37195.
Roberto Imbuzeiro Oliveira's work was supported by a {\em Bolsa de Produtividade em Pesquisa} from CNPq. His work in this article is part of the activities of FAPESP Center for Neuromathematics (grant\# 2013/ 07699-0 , FAPESP - S.Paulo Research Foundation).

\bibliography{rimfo_bibfile}

\begin{thebibliography}{10}

\bibitem{AlonMatiasSzegedy_FreqMoments}
N.~Alon, Y.~Matias, and M.~Szegedy.
\newblock The space complexity of approximating the frequency moments.
\newblock In {\em Proceedings of the Twenty-eighth Annual ACM Symposium on
  Theory of Computing}, STOC '96, pages 20--29, New York, NY, USA, 1996. ACM.

\bibitem{AudibertCatoni_Robust}
J.-Y. Audibert and O.~Catoni.
\newblock Robust linear least squares regression.
\newblock {\em Ann. Statist.}, 39(5):2766--2794, 10 2011.

\bibitem{BrownleesEtAlHeavyTailed}
C.~Brownlees, E.~Joly, and G.~Lugosi.
\newblock Empirical risk minimization for heavy-tailed losses.
\newblock {\em Annals of Statistics}, to appear, 2015.

\bibitem{BubeckEtAl_BanditsHeavy}
S.~Bubeck, N.~Cesa-Bianchi, and G.~Lugosi.
\newblock Bandits with heavy tail.
\newblock {\em Information Theory, IEEE Transactions on}, 59(11):7711--7717,
  Nov 2013.

\bibitem{Catoni_Challenging}
O.~Catoni.
\newblock Challenging the empirical mean and empirical variance: A deviation
  study.
\newblock {\em Ann. Inst. H. Poincar{\'e} Probab. Statist.}, 48(4):1148--1185,
  11 2012.

\bibitem{HsuBlog}
D.~Hsu.
\newblock Robust statistics.
\newblock Available from
  \texttt{http://www.inherentuncertainty.org/2010/12/robust-statistics.html},
  2010.

\bibitem{HsuSabato_MoMRegression}
D.~Hsu and S.~Sabato.
\newblock Heavy-tailed regression with a generalized median-of-means.
\newblock In Tony Jebara and Eric~P. Xing, editors, {\em Proceedings of the
  31st International Conference on Machine Learning (ICML-14)}, pages 37--45.
  JMLR Workshop and Conference Proceedings, 2014.

\bibitem{JeVaVa86}
M.~Jerrum, L.~Valiant, and V.~Vazirani.
\newblock Random generation of combinatorial structures from a uniform
  distribution.
\newblock {\em Theoretical Computer Science}, 43:186--188, 1986.

\bibitem{Lep90}
O.~V. Lepskii.
\newblock On a problem of adaptive estimation in {G}aussian white noise.
\newblock {\em Theory of Probability and its Applications}, 36:454--466, 1990.

\bibitem{Lep91}
O.~V. Lepskii.
\newblock Asymptotically minimax adaptive estimation {I}: Upper bounds.
  optimally adaptive estimates.
\newblock {\em Theory of Probability and its Applications}, 36:682--697, 1991.

\bibitem{LerasleOliveira_Robust}
M.~Lerasle and R.~I. Oliveira.
\newblock Robust empirical mean estimators.
\newblock arXiv:1112.3914, 2012.

\bibitem{Levin_Notes}
L.~A. Levin.
\newblock Notes for miscellaneous lectures.
\newblock {\em CoRR}, abs/cs/0503039, 2005.

\bibitem{Min13}
S.~Minsker.
\newblock Geometric median and robust estimation in {B}anach spaces.
\newblock {\em arXiv preprint}, 2013.

\bibitem{NemirovskyYudin_ProblemComplexity}
A.~Nemirovsky and D.~Yudin.
\newblock {\em Problem Complexity and Method Efficiency in Optimization}.
\newblock Wiley Interscience, 1983.

\bibitem{Pet75}
V.V. Petrov.
\newblock {\em Sums of Independent Random Variables}.
\newblock Springer-Verlag, Berlin, 1975.

\bibitem{Stroock_ProbTheory}
D.~W. Stroock.
\newblock {\em Probability theory: an analytic view}.
\newblock Cambridge University Press, 2003.

\end{thebibliography}
\bibliographystyle{plain}

\end{document}